\theoremstyle{plain}
\newtheorem{thm}{Theorem}[section]
\newtheorem{lem}[thm]{Lemma}
\newtheorem{prop}[thm]{Proposition}
\newtheorem{cor}[thm]{Corollary}
\theoremstyle{definition}
\newtheorem{de}[thm]{Definition}
\newtheorem{rem}[thm]{Remark}
\numberwithin{equation}{section}
\def \N {\mathbb N}
\def \Z {\mathbb Z}
\def \O {\mathcal{O}}
\def \F {\mathcal F}
\def \G {\mathcal{G}}
\def \B {\mathcal B}
\def \X {\mathcal{X}}
\def \SS {\mathcal{S}}
\def \PG {{{\bf P}\Gamma}}
\def \a {\alpha }
\def \b {\beta}
\def \ep {\epsilon}
\def \D {\Delta}
\begin{document}

\title{Topological mild mixing of all orders along polynomials}

\author{Yang Cao}

\author{Song Shao}

\address{CAS Wu Wen-Tsun Key Laboratory of Mathematics, and
Department of Mathematics, University of Science and Technology of China, Hefei, Anhui, 230026, P.R. China}

\email{cy412@mail.ustc.edu.cn}

\email{songshao@ustc.edu.cn}

\subjclass[2010]{Primary: 37B20, Secondary: 37B05, 37A25}


\thanks{This research is supported by NNSF of China (11971455, 11571335).}


\begin{abstract}
A minimal system $(X,T)$ is topologically mildly mixing if all non-empty open subsets $U,V$,
$\{n\in \Z: U\cap T^{-n}V\neq \emptyset\}$ is an IP$^*$-set. In this paper we show that if a minimal system is topologically mildly mixing, then it is mild mixing of all orders along polynomials. That is, suppose that $(X,T)$ is a topologically mildly mixing minimal system, $d\in \N$, $p_1(n),\ldots, p_d(n)$ are integral polynomials with no $p_i$ and no $p_i-p_j$ constant, $1\le i\neq j\le d$, then for all non-empty open subsets $U , V_1, \ldots, V_d $,
$\{n\in \Z: U\cap T^{-p_1(n) }V_1\cap T^{-p_2(n)}V_2\cap \ldots \cap T^{-p_d(n) }V_d \neq \emptyset \}$
is an IP$^*$-set. We also give the theorem for systems under abelian group actions.
\end{abstract}

\maketitle





\section{Introduction}

\subsection{Background}\
\medskip

IP-systems were introduced and studied in topological dynamics by Furstenberg and Weiss in \cite{FW78}, in ergodic theory by Furstenberg in \cite{F,F82}.
Let $S$ be an abelian semigroup and $\N=\{1,2,\ldots\}$ be the set of natural numbers. An {\em IP-set} of $S$ consists of a sequence of (not necessarily distinct) elements $s_i\in S, i\in \N$, together with all products $s_{i_1}s_{i_2}\ldots s_{i_k}$ that can be formed using distinct indices $i_1<i_1<\ldots <i_k$. Let $\F$ denote the family of all non-empty finite subsets of $\N$.
An {\em $\F$-sequence} in $S$ is a sequence $\{s_\a\}_{\a \in \F}$ indexed by the set $\F$. If set $s_\a=s_{i_1}s_{i_2}\cdots s_{i_k}$ for any $\a=\{i_1,i_2,\cdots,i_k\} \in \F$, then any IP-set has form $\{s_\a\}_{\a\in \F}$. Note that if $\a\cap\b=\emptyset $,
then $s_{\a\cup \b}=s_\a s_\b$.
In general $s_\a s_\b$ need not belong to the set and so an IP-set need not be a semigroup. On the other hand every countable semigroup is an IP-set. If each generator $s_i$ of an IP-set occurs infinitely often in the sequence $\{s_i\}_{i=1}^\infty$, then the IP-set generated is a semigroup. Thus IP-sets should be
viewed as generalized semigroups.

For example, let $\{n_i\}_{i=1}^\infty \subseteq \Z$, where $\Z$ is the set of integers. An {\em IP-set} generated by the sequence $\{n_i\}_{i=1}^{\infty}$ is the set
$$FS(\{n_i\}_{i=1}^{\infty}) := \{n_{i_1}+n_{i_2}+\cdots +n_{i_k}:
i_1<i_2<\cdots <i_k \ \text{for some $k\in \N$}\}.$$ If we denote
$n_\a=\sum_{i\in \a}n_i$, then
$FS(\{n_i\}_{i=1}^{\infty})=\{n_\a\}_{\a \in \F}.$
A set is called an {\em IP$^*$-set} if it intersects any IP-set.
An important result about IP-sets is the following Hindman theorem \cite{Hi74}:
For any finite partition of $\N$, one of the cells of the partition contains an IP-set.

\medskip

An IP-set $\{T_\a\}_{\a\in \F}$ of commuting transformations of a space $X$ will be called an {\em IP-system}. The IP-version of van der Waerden's theorem was given by Furstenberg and Weiss in \cite{FW78}, and an ergodic Szemer\'{e}di theorem for IP-systems was built by Furstenberg and Katznelson in \cite{FK85}.
One of mixing properties related to IP-system is mild mixing. The notion of mild mixing was introduced by Furstenberg and Weiss \cite{FW77} (this property was studied independently by Walters \cite{W} from the different viewpoint). Mild mixing is stronger than weak mixing, and weaker than strong mixing \cite{W, FW77}.

\medskip

A measure preserving system $(X,\B,\mu,T)$ is {\em mildly mixing} if for any  $A, B\in  \B$ and any $\ep>0$, $$\{n\in \Z: |\mu(A\cap T^{-n}B)-\mu(A)\mu(B)|<\ep\}$$ is an IP$^*$-set. Furstenebrg showed that mild mixing implies mild mixing of all orders:

\medskip

\noindent {\bf Theorem} {\em (Furstenberg)} \cite{F, F82}
{\em Let $(X,\B,\mu,T)$ be a mildly mixing system and $d\in \N$. Let $A_0,A_1,\ldots, A_d \in \B$ and $a_1,a_2,\ldots, a_d$ be distinct non-zero integers. Then for any $\ep>0$,
$$\{n\in \Z: |\mu(A_0\cap T^{-a_1n}A_1\cap \ldots \cap T^{-a_d n}A_d)-\mu(A_0)\mu(A_1)\ldots \mu(A_d)|<\ep\}$$     is an IP$^*$-set.
}

\medskip


An {\em integral polynomial} is a polynomial taking integer values at the
integers. Bergelson gave a polynomial ergodic theorem for mildly mixing systems.

\medskip

\noindent {\bf Theorem} {\em (Bergelson)}\cite{Bergelson87}
{\em Let $(X,\B,\mu,T)$ be a mildly mixing system and $d\in \N$. Let $p_1(n),\ldots, p_d(n)$ be integral polynomials with no $p_i$ and no $p_i-p_j$ constant, $1\le i\neq j\le d$. Then for all $A_0, A_1, \ldots, A_d\in \B$ and all $\ep>0$,
$$\{n\in \Z: |\mu(A_0\cap T^{-p_1(n)}A_1\cap \ldots \cap T^{-p_d(n)}A_d)-\mu(A_0)\mu(A_1)\ldots \mu(A_d)|<\ep\}$$
is an IP$^*$-set.
}

\medskip


Theorem above was generalized to commuting transformations by Bergelson and McCutcheon \cite{BM00}. The following result is a special case of \cite[Theorem 0.14]{BM00}. Note that in \cite[Theorem 0.14]{BM00} polynomials there are from $\Z^m$ to $\Z$.

\medskip

\noindent {\bf Theorem} {\em (Bergelson-McCutcheon)}\cite{BM00}
{\em Let $(X,\B, \mu, \Gamma)$ be a measure preserving system, where $\Gamma$ is an abelian group such
that for each $T\in \Gamma$, $T\neq e_\Gamma$\footnote{$e_\Gamma$ is the unit of $\Gamma$.}, is mildly mixing. For $d,k\in \N$, let $T_1,\ldots,T_d\in \Gamma$,
and $p_{i,j}, 1\le i\le k, 1\le j\le d$ be integral polynomials
such that the expressions $g_i(n)$
\begin{equation*}
 g_i(n)=T_1^{p_{i,1}(n)}\cdots T_d^{p_{i,d}(n)}, \quad i=1,2,\ldots, k,
\end{equation*}
and the expressions
$$ g_i(n)g_j(n)^{-1}=T_1^{p_{i,1}(n)-p_{j,1}(n)}\cdots T_d^{p_{i,d}(n)-p_{j,d}(n)}, \quad i\neq j\in \{1,2,\ldots, k\},$$
depend nontrivially on $n$\footnote{We say that $g(n)$ depends nontrivially on $n$, if $g(n)$ is a nonconstant mapping of $\Z$ into $\Gamma$.}. Then for all $A_0,A_1,\ldots,A_k\in \B$ and any $\ep>0$,
$$\{n\in \Z: |\mu(A_0\cap g_1(n)^{-1}A_1\cap \ldots \cap g_k(n)^{-1}A_k)-\mu(A_0)\mu(A_1)\ldots \mu(A_k)|<\ep\}$$
is an IP$^*$-set.
}

\medskip


The purpose of this paper is to give the topological version of theorems above.

\subsection{Main results}\
\medskip

The notion of topological mild mixing was introduced by Glasner and Weiss \cite{GW06}, and independently by Huang and Ye \cite{HY04}. A minimal system $(X,T)$ is {\em topologically mildly mixing} if all non-empty open subsets $U,V$,
$$N(U,V)=\{n\in \Z: U\cap T^{-n}V\neq \emptyset\}$$
is an IP$^*$-set.\footnote{This is not the original definition for topological mild mixing in \cite{GW06, HY04}.  There a topological system $(X,T)$ is said to be topologically mildly mixing if the product system with any transitive system is still transitive. But for minimal systems, the definitions are consistent \cite{HY04}.}

First corresponding to Furstenberg's theorem, we have
\begin{thm}\label{Thm-F-top}
Let $(X,T)$ be a topologically mildly mixing minimal system and $d\in \N$. Let $U , V_1, \ldots, V_d $ be non-empty open subsets and $a_1,a_2,\ldots, a_d$ be distinct non-zero integers. Then
$$\{n\in \Z: U\cap T^{-a_1n }V_1\cap T^{-a_2n}V_2\cap \ldots \cap T^{-a_dn }V_d \neq \emptyset \}$$
is an IP$^*$-set.
\end{thm}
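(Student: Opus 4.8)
The natural strategy is to run a topological analogue of Furstenberg's IP-system argument, replacing measure-theoretic ``positive measure'' statements with ``non-empty open'' statements and combining this with a compactness/Baire-category lemma in the enveloping IP sense. First I would set up the Stone--Čech machinery: fix an arbitrary IP-set $\{n_\a\}_{\a\in\F}$ in $\Z$ and let $p\in\beta\Z$ be a minimal idempotent in the closure of $\{n_\a\}$; the plan is to show the target set meets $\{n_\a\}_{\a\in\F}$, and since the IP-set was arbitrary this proves the set is IP$^*$. The key reduction is that topological mild mixing of $(X,T)$ says exactly that for every minimal idempotent ultrafilter $p$ and all non-empty open $U,V$ we have $U\cap T^{-p}V\neq\emptyset$, where $T^{-p}V=\{x: \{n: T^n x\in V\}\in p\}$; one then wants the ``higher order'' statement $U\cap T^{-a_1 p}V_1\cap\cdots\cap T^{-a_d p}V_d\neq\emptyset$ for the idempotent $p$, which via the definition of the ultrafilter limit immediately yields that $\{n: U\cap T^{-a_1 n}V_1\cap\cdots\cap T^{-a_d n}V_d\neq\emptyset\}\in p$, hence meets the given IP-set.

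The heart of the matter is therefore an iterated limit / van der Corput-type induction on $d$, carried out in the topological category. I would induct on $d$, the case $d=1$ being precisely the hypothesis of topological mild mixing. For the inductive step one writes the $d$-fold intersection and uses an IP-van der Corput lemma (the topological form of the PET-induction tool from Bergelson--McCutcheon, though here only linear $a_i n$ are needed, so the combinatorics are much lighter): by passing to differences $a_i n - a_j n = (a_i-a_j)n$ with the $a_i$ distinct and non-zero, one reduces the number of ``independent'' linear forms and appeals to the inductive hypothesis together with a minimal-idempotent refinement to control the error terms. Concretely, the multiple recurrence set for $(a_1,\dots,a_d)$ is shown to lie in every minimal idempotent $p$ by exhibiting it as containing a set of the form $A_1\cap A_2$ where $A_1$ comes from the inductive hypothesis applied to a product system and $A_2$ from the $d=1$ hypothesis applied along the ``shifted'' open sets; the idempotent property $p\cdot p = p$ is what glues these two IP$^*$ conditions together.

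The main obstacle I anticipate is making the passage ``from open-set non-emptiness to idempotent-ultrafilter recurrence and back'' fully rigorous without a measure to integrate against: in the ergodic proofs one exploits $L^2$ Hilbert-space geometry (orthogonality, weak limits, the splitting into compact and weak-mixing parts), none of which is available here, so the inductive bookkeeping must be done purely with finite intersections of open sets and properties of minimal idempotents in $\beta\Z$. I would handle this by working throughout with the dynamical system on the space of non-empty closed subsets or, more simply, by repeatedly using that $N(U,V)$ being IP$^*$ is equivalent to $U\cap T^{-p}V\neq\emptyset$ for all minimal idempotents $p$, and that such $p$ form a sufficiently rich family (they generate the smallest ideal) to run the induction. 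The linear case should go through cleanly once the $d=1$ equivalence and the idempotent ``diagonal'' trick are in place; the polynomial case in the abstract of the paper will require the full PET induction, but for Theorem 1.1 the linear structure means the only real work is organizing the two-step idempotent argument correctly.
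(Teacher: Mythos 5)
Your overall strategy (induct on $d$, pass to the difference transformations $T^{(a_i-a_j)n}$, and glue two recurrence statements using the additive structure of the IP-set) is the same shape as the paper's argument, but as written the proposal has a genuine error at its foundation. You reduce the theorem to showing that the return-time set lies in every \emph{minimal} idempotent of $\beta\Z$. That is the characterization of central$^*$ (equivalently D$^*$-type) sets, which is strictly weaker than IP$^*$: a set is IP$^*$ if and only if it belongs to \emph{every} idempotent ultrafilter, minimal or not. Worse, your opening move --- ``fix an arbitrary IP-set and let $p$ be a minimal idempotent in its closure'' --- can fail outright: the closure of $FS(\{n_i\})$ is a compact subsemigroup and so contains idempotents, but if the IP-set is not piecewise syndetic (e.g.\ a very sparse $FS$-set) it belongs to no minimal idempotent, so its closure contains none. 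The fix is simply to work with arbitrary idempotents throughout, but the claimed equivalence ``topological mild mixing $=$ recurrence along all minimal idempotents'' must be discarded.

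The second, more substantive gap is in the inductive step, which you describe only as exhibiting the multiple recurrence set as containing $A_1\cap A_2$ with the idempotency $p\cdot p=p$ doing the gluing. The target set is not naturally an intersection of two IP$^*$-sets, and the real difficulty in the topological category is converting \emph{product} transitivity (a point of $V_1\times\cdots\times V_d$ returning to $V_1\times\cdots\times V_d$ under $T^{a_1n}\times\cdots\times T^{a_dn}$, which follows from the filter property of $\G_A^*$) into the \emph{diagonal} statement that a single point of $U$ visits all the $V_i$ simultaneously. The paper does this by using minimality to write $X=\bigcup_{j=0}^{N}T^{-j}U$, pre-selecting $N+1$ indices $\a_0<\cdots<\a_N$ in the given IP-ring together with shrinking open sets $V_i^{(N)}\subseteq V_i$ satisfying $T^{a_in_{\a_j}}T^{-j}V_i^{(N)}\subseteq V_i$ (Lemma \ref{lem-KO}), then applying the inductive hypothesis to the difference system on the sets $V_i^{(N)}$ to produce $\b>\a_N$, and finally observing that one of the sums $n_{\a_j}+n_\b=n_{\a_j\cup\b}$ lands back in the IP-ring. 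Your ``shifted open sets'' remark gestures at this, but without the finite cover of $X$ by translates of $U$ and the preparatory lemma the induction does not close. You would also need to justify that $(X,T^{a_i-a_j})$ is mixing along the relevant IP-set (Proposition \ref{prop2.10} in the paper, via rescaling the generators of the IP-set); this is used implicitly when you pass to differences. The paper's proof is entirely elementary --- IP-rings and Hindman's theorem, no $\beta\Z$ --- and I would encourage you to rewrite your argument in that language or, if you keep ultrafilters, to supply the diagonalization step explicitly.
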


As corollary of Theorem \ref{Thm-F-top}, we have:
\begin{cor}\label{gG}
Let $(X,T)$ be a topologically mildly mixing minimal system. Then for any $d\in\N$, any distinct $a_1, \ldots, a_d \in \Z \setminus \{0\}$, and every IP-set $A$, there is a dense $G_\delta$ subset $X_0$ such that for any $x\in X_0$,
\begin{equation*}
  \overline{\{(T^{a_1n}x, \ldots,T^{a_d n}x):n \in A\}}=X^d.
\end{equation*}
\end{cor}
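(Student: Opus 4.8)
\textbf{Proof proposal for Corollary \ref{gG}.}

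The plan is to deduce the corollary from Theorem \ref{Thm-F-top} by a standard Baire category argument, using the fact that an IP$^*$-set meets every IP-set and, more to the point here, that an IP$^*$-set is nonempty (in fact syndetic), so the relevant ``hitting times'' are always realized. Fix $d\in\N$, distinct nonzero integers $a_1,\ldots,a_d$, and an IP-set $A=\{n_\a\}_{\a\in\f}\subseteq\Z$. For the polynomials $p_i(n)=a_i n$, Theorem \ref{Thm-F-top} tells us that for every choice of nonempty open sets $U,V_1,\ldots,V_d\subseteq X$ the set
\[
N(U;V_1,\ldots,V_d)=\{n\in\Z: U\cap T^{-a_1 n}V_1\cap\cdots\cap T^{-a_d n}V_d\neq\emptyset\}
\]
is an IP$^*$-set, and hence intersects $A$. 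First I would fix a countable base $\{W_j\}_{j\in\N}$ for the topology of $X$ and, for each tuple $\bar j=(j_0,j_1,\ldots,j_d)\in\N^{d+1}$, define
\[
E_{\bar j}=\bigl\{x\in X: \exists\, n\in A \text{ with } x\in W_{j_0},\ T^{a_i n}x\in W_{j_i}\text{ for }i=1,\ldots,d\bigr\}
=\bigcup_{n\in A} W_{j_0}\cap T^{-a_1 n}W_{j_1}\cap\cdots\cap T^{-a_d n}W_{j_d}.
\]
Each $E_{\bar j}$ is open, being a union of open sets. The key step is to show $E_{\bar j}$ is dense: given any nonempty open $U$, apply Theorem \ref{Thm-F-top} to $U,W_{j_1},\ldots,W_{j_d}$; since $N(U;W_{j_1},\ldots,W_{j_d})\cap A\neq\emptyset$, pick $n$ in this intersection and a point $y\in U\cap T^{-a_1 n}W_{j_1}\cap\cdots\cap T^{-a_d n}W_{j_d}$, and then $y\in U\cap E_{\bar j}$ once we also intersect with $W_{j_0}$ — more precisely one should run the argument with $U$ replaced by $U\cap W_{j_0}$ when the latter is nonempty, and note that the $\bar j$ with $U\cap W_{j_0}=\emptyset$ for all relevant $U$ contribute nothing and can be discarded, or simply restrict attention to those $\bar j$ for which $W_{j_0}$ meets the base element in question. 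Thus each $E_{\bar j}$ that is genuinely constrained is a dense open set.

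Now set $X_0=\bigcap_{\bar j} E_{\bar j}$, the intersection over all $\bar j\in\N^{d+1}$ (discarding the vacuous ones). This is a countable intersection of dense open subsets of the compact metric space $X$, hence a dense $G_\delta$ by the Baire category theorem. It remains to verify that for $x\in X_0$ the orbit closure $\overline{\{(T^{a_1 n}x,\ldots,T^{a_d n}x):n\in A\}}$ is all of $X^d$. Take any nonempty basic open box $W_{j_1}\times\cdots\times W_{j_d}$ in $X^d$; choosing $j_0$ so that $x\in W_{j_0}$ (possible since the $W_j$ form a base), membership of $x$ in $E_{(j_0,j_1,\ldots,j_d)}$ gives some $n\in A$ with $T^{a_i n}x\in W_{j_i}$ for all $i$, i.e. $(T^{a_1 n}x,\ldots,T^{a_d n}x)\in W_{j_1}\times\cdots\times W_{j_d}$. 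Since such boxes form a base for $X^d$, the orbit is dense, proving the claim.

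The main obstacle is purely bookkeeping: making the dependence on the ``anchor'' coordinate $W_{j_0}$ clean, so that the sets $E_{\bar j}$ are honestly dense open and $X_0$ is their intersection rather than something messier. The cleanest way to handle this is to index the $G_\delta$ not by $\N^{d+1}$ but, for each fixed $d$-tuple $(j_1,\ldots,j_d)$, to note that $D_{(j_1,\ldots,j_d)}:=\{x\in X:\exists n\in A,\ T^{a_i n}x\in W_{j_i}\ \forall i\}=\bigcup_{n\in A}\bigcap_{i=1}^d T^{-a_i n}W_{j_i}$ is open, and dense because for any nonempty open $U$ Theorem \ref{Thm-F-top} produces $n\in A\cap N(U;W_{j_1},\ldots,W_{j_d})$ and hence a point of $U$ lying in $D_{(j_1,\ldots,j_d)}$; then $X_0=\bigcap_{(j_1,\ldots,j_d)\in\N^d} D_{(j_1,\ldots,j_d)}$ works directly, with no need for the extra coordinate. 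Everything else is the routine Baire-category packaging above. Note that the same argument, applied with the polynomial version Theorem \ref{Thm-F-top} replaced by the general polynomial statement from the abstract, yields the analogous $G_\delta$ statement for polynomial orbits as well.
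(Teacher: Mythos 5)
Your proof is correct and takes essentially the same route as the paper: the paper packages the Baire-category argument as Lemma \ref{diagonal1}, defining $X_0=\bigcap_{V_1,\ldots,V_d\in\mathcal U}\bigcup_{n\in A}T^{-a_1n}V_1\cap\cdots\cap T^{-a_dn}V_d$ over a countable base and combining this with Theorem \ref{Thm-F-top}, which is exactly your simplified indexing $X_0=\bigcap_{(j_1,\ldots,j_d)}D_{(j_1,\ldots,j_d)}$. The extra anchor coordinate $W_{j_0}$ in your first attempt is indeed unnecessary, as you note.
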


Then we have the following result corresponding to Bergelson's theorem:

\begin{thm}\label{thm-B-top}
Let $(X,T)$ be a topologically mildly mixing minimal system and $d\in \N$. Let $p_1(n),\ldots, p_d(n)$ be integral polynomials with no $p_i$ and no $p_i-p_j$ constant, $1\le i\neq j\le d$. Then for all non-empty open subsets $U , V_1, \ldots, V_d $,
$$\{n\in \Z: U\cap T^{-p_1(n) }V_1\cap T^{-p_2(n)}V_2\cap \ldots \cap T^{-p_d(n) }V_d \neq \emptyset \}$$
is an IP$^*$-set.
\end{thm}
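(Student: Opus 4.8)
The plan is to reduce Theorem~\ref{thm-B-top} to its affine case, Theorem~\ref{Thm-F-top}, by a PET (polynomial exhaustion) induction carried out inside an enveloping semigroup. Recall first that a subset of $\Z$ is IP$^*$ exactly when it meets every IP-set. So fix nonempty open sets $U,V_1,\dots,V_d$, fix a generating sequence $\{n_i\}_{i\ge 1}$ of an arbitrary IP-set, and write $m_\a=\sum_{i\in\a}n_i$ for $\a\in\F$; it suffices to produce $\a\in\F$ and a point $x\in U$ with $T^{p_j(m_\a)}x\in V_j$ for $1\le j\le d$. In the course of the argument we shall repeatedly replace $\{n_i\}$ by block sums $\{\sum_{i\in\a_t}n_i\}_t$ taken along a sequence $\a_1<\a_2<\cdots$ of finite blocks, i.e.\ pass to a \emph{sub-IP-ring}; this is harmless by Hindman's theorem \cite{Hi74}, since a sub-IP-set is again an IP-set, and hence the conclusions of Theorem~\ref{Thm-F-top} and Corollary~\ref{gG} remain available along every such refinement.

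The mechanism is the PET induction developed for measure preserving systems by Bergelson \cite{Bergelson87} and Bergelson--McCutcheon \cite{BM00}, transported to the topological setting. For $\a\in\F$ put $S_\a=T^{p_1(m_\a)}\times\cdots\times T^{p_d(m_\a)}$, a homeomorphism of $X^d$; we must show that some $S_\a$ carries a point of the diagonal lying over $U$ into $V_1\times\cdots\times V_d$. The $\F$-indexed family $\{S_\a\}$ is a VIP system, and its mixed differences $S_{\a\cup\b}S_\a^{-1}S_\b^{-1}$ (for $\a<\b$) are again VIP systems of strictly smaller Bergelson complexity, since for fixed $\b$ the $j$-th exponent $p_j(m_{\a\cup\b})-p_j(m_\a)-p_j(m_\b)$ is a polynomial of degree $\deg p_j-1$ in the $\a$-variable. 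Ordering VIP systems by their weight vectors, the minimal case is the one in which each $p_j$ is affine, $p_j(n)=a_jn+b_j$ with the integers $a_j$ distinct and nonzero; there we absorb $b_j$ into $V_j$ (replace $V_j$ by $T^{b_j}V_j$) and read off the required configuration along the IP-set $\{a_jm_\a\}_\a$ directly from Theorem~\ref{Thm-F-top}.

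The inductive step is where the only genuine difficulty lies. In the ergodic proof one descends from a VIP system to its mixed differences by the van der Corput inequality in $L^2$; as no Hilbert space is available here, the descent must instead be carried out inside the enveloping semigroup $E$ of the coordinatewise $\Z^d$-action on $X^d$. After passing to a sub-IP-ring one may assume, by compactness of $E$ together with Hindman's theorem, that all the relevant (iterated) IP-limits of subfamilies of $\{S_\a\}$ and of their mixed differences converge in $E$; topological mild mixing of $(X,T)$---that is, $N(U,V)$ being IP$^*$, equivalently belonging to every idempotent ultrafilter---is precisely what forces the resulting limit transformations to spread the prescribed open sets densely onto one another, so that a configuration obtained for the lower-complexity differenced system lifts back to $\{S_\a\}$. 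Making this topological van der Corput--type lemma precise, and verifying that the complexity strictly drops at each step, are the crux of the matter; the accompanying bookkeeping of weight vectors is exactly as in \cite{Bergelson87,BM00}. The abelian-group version is then obtained by running the identical induction with $T^{p_j(m_\a)}$ replaced by $T_1^{p_{j,1}(m_\a)}\cdots T_r^{p_{j,r}(m_\a)}$, the base case being the group-action analogue of Theorem~\ref{Thm-F-top}.
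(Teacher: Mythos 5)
Your overall skeleton --- PET induction on weight vectors with the affine case (Theorem \ref{Thm-F-top}) as the base, passage to sub-IP-rings via Hindman's theorem, absorbing constant terms --- matches the paper's strategy. But there is a genuine gap exactly where you yourself locate ``the only genuine difficulty'': the inductive step. You assert that the descent from $\{S_\a\}$ to its mixed differences can be carried out inside the enveloping semigroup of $X^d$ by taking iterated IP-limits, and that topological mild mixing ``forces the resulting limit transformations to spread the prescribed open sets densely onto one another''; no such lemma is stated, let alone proved, and it is far from clear how convergence of IP-limits in the (non-metrizable, generally discontinuous) enveloping semigroup would produce the required non-empty open intersection. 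As written, the proposal defers the entire content of the theorem to an unproved ``topological van der Corput-type lemma.''

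The paper's inductive step is concrete and uses no enveloping semigroup. Given a system $\SS=\{g_1,\dots,g_k\}$ one (i) covers $X=\bigcup_{i=0}^{N}T^{-i}U$ by minimality; (ii) first proves the \emph{product} (non-diagonal) transitivity of $\SS$ along $A$ --- itself part of the induction, via the Bergelson--Leibman recurrence theorem (Theorem \ref{BL}) and the filter property of $\G_A^*$ --- and feeds it into Lemma \ref{freedom} to get indices $\a_0<\dots<\a_N$ in the given IP-ring and shrunken open sets $V_t^{(N)}\subseteq V_t$ with $g_t(n_{\a_j})T^{-j}V_t^{(N)}\subseteq V_t$; (iii) applies the inductive hypothesis to the differenced system $g_{t,j}(n)=g_t(n_{\a_j})^{-1}g_t(n+n_{\a_j})f(n)^{-1}$, where $f$ is a polynomial of \emph{minimal} weight in $\SS$; and (iv) chases the point $z=T^{-b}f(n_\b)^{-1}x$ back into $U\cap g_1(n_\a)^{-1}V_1\cap\dots\cap g_k(n_\a)^{-1}V_k$ with $\a=\b\cup\a_b$. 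Note also that your differencing scheme, in which each $p_j$ is differenced against itself so that every degree drops by one, is not the right one: the recombination in step (iv) requires a single common shift $f(n_\b)^{-1}$, so all polynomials must be differenced against the same minimal-weight $f$, and then only the minimal-weight equivalence class loses a member while the others keep their weights --- this is what makes the weight vector strictly decrease in the PET order while still allowing the point-chase. Supplying steps (ii)--(iv), or an actual proof of your enveloping-semigroup lemma, is what is missing; the weight-vector bookkeeping is indeed routine, but the transfer mechanism is not.
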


By Theorem \ref{thm-B-top}, we have the following corollaries.

\begin{cor}\label{cor-1.4}
Let $(X,T)$ be a topologically mildly mixing minimal system and $d\in \N$. Let $p_1(n),\ldots, p_d(n)$ be integral polynomials with no $p_i$ and no $p_i-p_j$ constant, $1\le i\neq j\le d$.
Then for every IP-set $A$, there is a dense $G_\delta$ subset $X_0$ such that for any $x\in X_0$,
$$\{(T^{p_1(n)}x, \ldots, T^{p_d(n)}x): n \in A\}$$
is dense in $X^d$.
\end{cor}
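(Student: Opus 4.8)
The statement to prove is Corollary~\ref{cor-1.4}, which derives a dense $G_\delta$ residuality statement from the IP$^*$ recurrence theorem (Theorem~\ref{thm-B-top}).

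\medskip

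\noindent\textbf{Proof proposal.}
The plan is to run a standard Baire category argument, using Theorem~\ref{thm-B-top} to guarantee that each of countably many ``approximation'' conditions is satisfied on a dense open set, and then intersect. Fix a countable basis $\{W_j\}_{j\in\N}$ for the topology of $X$, so that $\{W_{j_1}\times\cdots\times W_{j_d}: (j_1,\ldots,j_d)\in\N^d\}$ is a countable basis for $X^d$. For each $d$-tuple $\mathbf{j}=(j_1,\ldots,j_d)$, and for the fixed IP-set $A=\{n_\alpha\}_{\alpha\in\F}$, define
\[
  E_{\mathbf{j}} \;=\; \bigcup_{n\in A}\; T^{-p_1(n)}W_{j_1}\cap T^{-p_2(n)}W_{j_2}\cap\cdots\cap T^{-p_d(n)}W_{j_d}.
\]
Each $E_{\mathbf{j}}$ is open (a union of open sets), and I claim it is dense. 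Indeed, let $U$ be any non-empty open set; by Theorem~\ref{thm-B-top} the set $\{n\in\Z: U\cap T^{-p_1(n)}W_{j_1}\cap\cdots\cap T^{-p_d(n)}W_{j_d}\neq\emptyset\}$ is an IP$^*$-set, hence it meets the IP-set $A$. Picking such an $n\in A$ gives a point of $U$ lying in $E_{\mathbf{j}}$, so $U\cap E_{\mathbf{j}}\neq\emptyset$. Thus each $E_{\mathbf{j}}$ is dense open.

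\medskip

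Now set $X_0=\bigcap_{\mathbf{j}\in\N^d}E_{\mathbf{j}}$. Since $X$ is a compact metric space, it is a Baire space, and a countable intersection of dense open sets is a dense $G_\delta$; hence $X_0$ is a dense $G_\delta$ subset of $X$. For any $x\in X_0$ and any non-empty basic open box $W_{j_1}\times\cdots\times W_{j_d}\subseteq X^d$, the fact that $x\in E_{(j_1,\ldots,j_d)}$ means there is $n\in A$ with $x\in T^{-p_i(n)}W_{j_i}$ for every $i$, i.e. $(T^{p_1(n)}x,\ldots,T^{p_d(n)}x)\in W_{j_1}\times\cdots\times W_{j_d}$. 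Since the boxes form a basis of $X^d$, the orbit set $\{(T^{p_1(n)}x,\ldots,T^{p_d(n)}x):n\in A\}$ meets every non-empty open subset of $X^d$, i.e. it is dense in $X^d$. This completes the argument.

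\medskip

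There is essentially no obstacle here: the whole content is in Theorem~\ref{thm-B-top}, and the corollary is the routine ``IP$^*$-recurrence $\Rightarrow$ residual transitivity'' packaging. The only minor point to be careful about is that $X_0$ depends on the chosen countable basis but the conclusion does not, and that $A\neq\emptyset$ is guaranteed by the definition of an IP-set so that the intersection $\{n\in\Z:\ldots\}\cap A$ being non-empty is meaningful; both are immediate. (Corollary~\ref{gG} is the special case $p_i(n)=a_i n$ and follows identically from Theorem~\ref{Thm-F-top}.)
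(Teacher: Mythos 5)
Your proof is correct and is essentially the paper's own argument: the paper packages exactly this Baire category step as Lemma \ref{diagonal1} (take $X_0=\bigcap_{V_1,\ldots,V_d\in\mathcal U}\bigcup_{n\in A}T^{-p_1(n)}V_1\cap\cdots\cap T^{-p_d(n)}V_d$) and feeds it Theorem \ref{thm-B-top} via Corollary \ref{cor-3.6}. No substantive difference.
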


\begin{cor}\label{cor-1.5}
Let $(X,T)$ be a topologically mildly mixing minimal system, and let $p(n)$ be a nontrivial integral polynomial. Then for every IP-set $A$, there is a dense $G_\delta$ subset $X_0$ such that for any $x\in X_0$,
$\{T^{p(n)}x: n \in A\}$ is dense in $X$.
\end{cor}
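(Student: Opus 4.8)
The plan is to obtain this as the $d=1$ case of Corollary \ref{cor-1.4}, or, what amounts to the same thing, to deduce it directly from Theorem \ref{thm-B-top} by a routine Baire category argument. First I would note that a \emph{nontrivial} integral polynomial $p(n)$ is by definition non-constant, so taking $d=1$ and $p_1=p$ the hypotheses of Theorem \ref{thm-B-top} are met (there is no condition of the form $p_i-p_j$ to check when $d=1$). Consequently, for every pair of non-empty open sets $U,V\subseteq X$ the set $N_p(U,V):=\{n\in\Z: U\cap T^{-p(n)}V\neq\emptyset\}$ is an IP$^*$-set.

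Next, fix the given IP-set $A\subseteq\Z$ and a countable base $\{V_k\}_{k\in\N}$ for the topology of $X$. For each $k$ I would set $W_k:=\bigcup_{n\in A}T^{-p(n)}V_k$, which is open, and claim that each $W_k$ is dense. To see this, let $U$ be an arbitrary non-empty open subset of $X$; since $N_p(U,V_k)$ is an IP$^*$-set it must intersect the IP-set $A$, so there is some $n\in A$ with $U\cap T^{-p(n)}V_k\neq\emptyset$, i.e.\ $U\cap W_k\neq\emptyset$. Hence $W_k$ is a dense open set.

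Finally, put $X_0:=\bigcap_{k\in\N}W_k$; by the Baire category theorem (applicable since $X$, being a minimal metric system, is a complete metric space) $X_0$ is a dense $G_\delta$ subset of $X$. For any $x\in X_0$ and any $k$ there is $n\in A$ with $T^{p(n)}x\in V_k$, so $\{T^{p(n)}x:n\in A\}$ meets every basic open set and is therefore dense in $X$. I do not expect a genuine obstacle here: once Theorem \ref{thm-B-top} (indeed only its $d=1$ instance) is available, the corollary is purely a Baire-category packaging of it, and the only point requiring a moment's care is the observation that ``nontrivial'' is exactly the non-constancy hypothesis needed to invoke that theorem.
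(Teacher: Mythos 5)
Your proof is correct and follows essentially the same route as the paper: the paper obtains Corollary \ref{cor-1.5} as the $d=1$ instance of Corollary \ref{cor-1.4}, which in turn is Theorem \ref{thm-B-top} combined with exactly the Baire-category packaging you describe (this is the content of Lemma \ref{diagonal1} and Corollary \ref{cor-3.6}). Your observation that ``nontrivial'' is precisely the non-constancy needed to invoke the $d=1$ case, with no $p_i-p_j$ conditions to check, is also the right reading.
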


Finally we have a general result for minimal systems under abelian group actions.

\begin{thm}\label{Thm-BM-top}
Let $(X,\Gamma)$ be a minimal system, where $\Gamma$ is an abelian group such
that for each $T\in \Gamma$, $T\neq e_\Gamma$, is topologically mildly mixing minimal. For $d,k\in \N$, let $T_1,\ldots,T_d\in \Gamma$,
and $p_{i,j}, 1\le i\le k, 1\le j\le d$ be integral polynomials
such that the expressions
\begin{equation*}
 g_i(n)=T_1^{p_{i,1}(n)}\cdots T_d^{p_{i,d}(n)}, \quad i=1,2,\ldots, k,
\end{equation*}
and the expressions
$$ g_i(n)g_j(n)^{-1}=T_1^{p_{i,1}(n)-p_{j,1}(n)}\cdots T_d^{p_{i,d}(n)-p_{j,d}(n)}, \quad i\neq j\in \{1,2,\ldots, k\},$$
depend nontrivially on $n$. Then for all non-empty open subsets $U , V_1, \ldots, V_k $,
$$\{n\in \Z: U\cap g_1(n)^{-1} V_1\cap g_2(n)^{-1} V_2\cap \ldots \cap g_{k}^{-1}V_k \neq \emptyset \}$$
is an IP$^*$-set.
\end{thm}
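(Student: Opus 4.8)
The plan is to deduce Theorem~\ref{Thm-BM-top} from Theorem~\ref{thm-B-top} by the standard trick of passing to a suitable product system and a single generating transformation. First I would set $G=\Gamma$ and consider the acting group; since each nontrivial element of $\Gamma$ is topologically mildly mixing and minimal, one first checks (a small lemma) that any subsystem generated by finitely many elements $T_1,\ldots,T_d$ is itself topologically mildly mixing as an action of $\Z^d$ on $X$ — more precisely, that for the $\Z$-action $n\mapsto S^n$ induced by any $S\in\langle T_1,\ldots,T_d\rangle\setminus\{e\}$ the system $(X,S)$ is topologically mildly mixing minimal. The point is that a single expression $g(n)=T_1^{q_1(n)}\cdots T_d^{q_d(n)}$ with $g$ depending nontrivially on $n$ need not be a power of a fixed element, so one cannot immediately appeal to Theorem~\ref{thm-B-top} on $(X,S)$. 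Instead, the natural move is to work on the product space $Y=X^{k}$ (or $X^{k+1}$, keeping the $0$-th coordinate) under a cleverly chosen abelian group action that makes each $g_i(n)$ become evaluation of an integral polynomial in one transformation.

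Concretely, here is the device I would use. Enlarge the acting group to the diagonal-type action on $Y=X\times X\times\cdots\times X$ ($k$ copies, indexed $1,\ldots,k$) by the subgroup $\tilde\Gamma\subseteq \Gamma^{k}$ generated by the elements $(T_j,T_j,\ldots,T_j)$, $j=1,\ldots,d$, together with the ``staggering'' elements that act on the $i$-th coordinate only. Then the orbit question $U\cap g_1(n)^{-1}V_1\cap\cdots\cap g_k(n)^{-1}V_k\neq\emptyset$ becomes: does the $\tilde\Gamma$-orbit of the open set $U\times\cdots\times U$ meet $V_1\times\cdots\times V_k$ along the ``time'' $n$, where the time evolution on $Y$ is $n\mapsto (g_1(n),g_2(n),\ldots,g_k(n))$. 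Since each $g_i$ and each $g_ig_j^{-1}$ depends nontrivially on $n$, after conjugating the coordinates (replace the $i$-th coordinate function $g_i(n)$ by $g_1(n)^{-1}g_i(n)$, which corresponds to passing to a conjugate of $Y$ and does not affect minimality or mild mixing) one reduces to the situation where the evolution is driven by polynomials $p_i(n)$ in commuting transformations with $p_i$, $p_i-p_j$ nonconstant — i.e.\ exactly the hypothesis format of Bergelson-type results. The IP$^*$ conclusion for this product system then follows from Theorem~\ref{thm-B-top} applied in the group $\tilde\Gamma$, provided we know $(Y,\tilde\Gamma)$ (or the relevant single-transformation reductions inside it) is topologically mildly mixing minimal.

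That last provisio is the main obstacle, and it is really the heart of the matter: one must show that the product $(X\times\cdots\times X)$ under the diagonal action of $\langle T_1,\ldots,T_d\rangle$-type subgroups is still topologically mildly mixing minimal, given only that each individual nontrivial $T\in\Gamma$ is. For this I would invoke the IP$^*$ intersection property directly rather than a black-box product theorem: the key structural fact (essentially the polynomial IP van der Waerden machinery underlying Theorem~\ref{thm-B-top}, or the Hales--Jewett/$IP$-Szemer\'edi apparatus of \cite{BM00} in its topological incarnation) is that a finite intersection of IP$^*$-sets is again an IP$^*$-set, and that topological mild mixing of the one-parameter subactions propagates to the polynomial multiple-recurrence statement on the product because the proof of Theorem~\ref{thm-B-top} only ever uses the hypothesis through such one-parameter $N(U,V)$ sets. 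So the real work is bookkeeping: set up the correct product system and the correct change of coordinates so that Theorem~\ref{thm-B-top} literally applies coordinate-by-coordinate, and verify that the coordinate change preserves the hypotheses. I expect no genuinely new dynamical input beyond Theorem~\ref{thm-B-top}; the difficulty is purely organizational, in arranging the $d$-dimensional polynomial data $\{p_{i,j}\}$ into the one-dimensional framework, handling the nonconstancy conditions on $g_i$ and $g_ig_j^{-1}$ correctly, and confirming minimality of the auxiliary product system.
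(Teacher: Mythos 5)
There is a genuine gap, and it sits exactly where you flag it yourself: a $\Gamma$-polynomial $g_i(n)=T_1^{p_{i,1}(n)}\cdots T_d^{p_{i,d}(n)}$ is in general \emph{not} of the form $S^{q(n)}$ for any fixed $S\in\Gamma$ and integral polynomial $q$ (consider $T_1^{n}T_2^{n^2}$ with $T_1,T_2$ generating a free abelian group), and none of your devices removes this obstruction. Passing to $Y=X^k$ with diagonal and ``staggered'' generators only re-expresses $(g_1(n),\ldots,g_k(n))$ as a product of powers of $k\cdot d$ commuting transformations with the \emph{same} several polynomials as exponents; it never collapses the data to iterates of a single transformation, which is what Theorem \ref{thm-B-top} requires. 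Likewise the conjugation $g_i\mapsto g_1^{-1}g_i$ changes which $\Gamma$-polynomials appear but not the fact that several independent generators carry polynomials of different degrees. Your fallback assertion --- that the proof of Theorem \ref{thm-B-top} ``only ever uses the hypothesis through one-parameter $N(U,V)$ sets'' and therefore ``propagates'' --- is not a proof: the single-generator argument is a PET-induction over weight vectors built from weights $(1,k)$, and the multi-generator statement needs a strictly larger induction. Two further points would also need repair: a product of minimal systems need not be minimal, so requiring $(Y,\tilde\Gamma)$ to be \emph{minimal} mildly mixing is both false in general and unnecessary; and your scheme makes no visible use of the hypothesis that \emph{every} nontrivial element of $\Gamma$ is mildly mixing, which is essential because the induction generates arbitrary words in the $T_j$ (e.g.\ the differences $T_iT_j^{-1}$ already in the linear case).

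For comparison, the paper does not deduce Theorem \ref{Thm-BM-top} from Theorem \ref{thm-B-top}; both are specializations of Theorem \ref{main-thm}, which is proved directly in the multi-generator setting. The weight of a $\Gamma$-polynomial is a pair $(l,k)$ recording the largest generator index $l$ occurring nontrivially and the degree $k$ of the corresponding exponent polynomial, and the PET-induction runs over weight vectors of finite systems $\SS\subseteq\PG_0^*$. The base case is the linear commuting case (Theorem \ref{thm4.1}, proved by induction on $d$ using minimality of a single $(X,T)$ to write $X=\bigcup_{j=0}^N T^{-j}U$). The induction step replaces $\SS=\{g_1,\ldots,g_k\}$ by the system of $\Gamma$-polynomials $g_{t,j}(n)=g_t(n_{\alpha_j})^{-1}g_t(n+n_{\alpha_j})f(n)^{-1}$, where $f$ has minimal weight in $\SS$; this strictly decreases the weight vector. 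The step uses the Bergelson--Leibman IP$^*$ recurrence theorem (Theorem \ref{BL}) and the Kwietniak--Oprocha-type Lemma \ref{freedom}, neither of which appears in your outline. If you want a correct proof along your philosophy, you would have to carry out this two-parameter PET-induction yourself rather than quote Theorem \ref{thm-B-top}.
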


As a corollary, we have:

\begin{cor}\label{cor-1.7}
Under assumptions of Theorem \ref{Thm-BM-top}, for any IP-set $A$ there is a dense $G_\delta$ set $X_0$ of $X$ such that for all $x\in X_0$,
	\begin{equation*}
	\{(g_1(n)x, \ldots , g_k(n)x ): n \in A \}
		\end{equation*}
is dense in $X^k$.
\end{cor}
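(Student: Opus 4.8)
The plan is to deduce the corollary from Theorem~\ref{Thm-BM-top} by a standard Baire category argument, the only dynamical input being that an IP$^*$-set necessarily meets the IP-set $A$. Throughout I regard $X$ as a compact metric space, so that $X^k$ is second countable and the Baire category theorem applies.

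First I would fix a countable base $\{W_m\}_{m\in\N}$ for the topology of $X^k$ consisting of product sets $W_m=V_1^{(m)}\times\cdots\times V_k^{(m)}$, where each $V_i^{(m)}$ is a non-empty open subset of $X$; such a base exists because $X$ is second countable. For each $m$ I would then define
$$O_m=\bigcup_{n\in A}\bigcap_{i=1}^{k} g_i(n)^{-1}V_i^{(m)}.$$
Since each $g_i(n)$ is a homeomorphism, every $g_i(n)^{-1}V_i^{(m)}$ is open, and hence $O_m$ is open. The key claim is that each $O_m$ is \emph{dense}. Indeed, let $U\subseteq X$ be an arbitrary non-empty open set. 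Applying Theorem~\ref{Thm-BM-top} to $U,V_1^{(m)},\ldots,V_k^{(m)}$ shows that $\{n\in\Z:U\cap g_1(n)^{-1}V_1^{(m)}\cap\cdots\cap g_k(n)^{-1}V_k^{(m)}\neq\emptyset\}$ is an IP$^*$-set, and therefore meets the IP-set $A$. Thus there exists $n\in A$ with $U\cap\bigcap_{i=1}^k g_i(n)^{-1}V_i^{(m)}\neq\emptyset$, which forces $U\cap O_m\neq\emptyset$. As $U$ was arbitrary, $O_m$ is dense.

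Next I would set $X_0=\bigcap_{m\in\N}O_m$. Being a countable intersection of dense open subsets of the compact, hence Baire, metric space $X$, the set $X_0$ is a dense $G_\delta$. To verify the orbit-density property, fix $x\in X_0$ and any non-empty open $W\subseteq X^k$; choose $m$ with $W_m\subseteq W$. Since $x\in O_m$, there is $n\in A$ with $g_i(n)x\in V_i^{(m)}$ for every $i$, that is $(g_1(n)x,\ldots,g_k(n)x)\in W_m\subseteq W$. Hence $\{(g_1(n)x,\ldots,g_k(n)x):n\in A\}$ meets every non-empty open subset of $X^k$ and is therefore dense in $X^k$, which is the assertion of the corollary.

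The derivation is essentially routine once Theorem~\ref{Thm-BM-top} is available; the single step carrying the dynamical content is the density of $O_m$, where the IP$^*$-property furnished by the theorem is converted into an explicit return time $n$ lying in the prescribed IP-set $A$. I do not expect a genuine obstacle beyond ensuring that the ambient space is second countable and Baire, which is built into the standing hypotheses on the minimal system $(X,\Gamma)$.
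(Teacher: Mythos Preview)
Your proof is correct and follows essentially the same route as the paper: the paper packages the Baire category step as Lemma~\ref{diagonal1} and then derives the corollary from it (via Corollary~\ref{cor-3.5} together with Proposition~\ref{prop2.10}), but the argument is identical to yours --- intersect over a countable base the unions $\bigcup_{n\in A}\bigcap_i g_i(n)^{-1}V_i$ and use that the IP$^*$ hitting set furnished by Theorem~\ref{Thm-BM-top} meets the IP-set $A$.
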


\medskip

As in \cite{BM00, F82, FK85}, we will deal with mixing for IP-systems, and get results for mild mixing as corollaries. Here we will not repeat the statements of results for mixing IP-systems, and see Section \ref{section-statement} for details.

\subsection{The organization of the paper}\
\medskip

We organize the paper as follows. In Section \ref{section-pre}, we introduce some basic notions and results needed in the paper. In Section \ref{section-statement}, we give the statements of the results of the paper. In Section \ref{section-linear} we give a proof for the linear case to warm up. In Section \ref{section-PET}, we give examples to show how to use PET-induction to prove the results, and in Section \ref{section-proof of Main} we give the complete proof for the main result Theorem \ref{main-thm}.

\section{Preliminaries}\label{section-pre}


\subsection{Topological transformation groups}\
\medskip

A {\em topological dynamical system} (t.d.s. for short) is a triple
$\X=(X, \Gamma, \Pi)$, where $X$ is a compact metric space with metric $\rho$, $\Gamma$ is a
Hausdorff topological group with the unit $e_\Gamma$ and $\Pi: \Gamma\times X\rightarrow X$ is a
continuous map such that $\Pi(e_\Gamma,x)=x$ and
$\Pi(s,\Pi(t,x))=\Pi(st,x)$. We shall fix $\Gamma$ and suppress the
action symbol. Then for any $t\in \Gamma$, $t: X\rightarrow X$ is a homeomorphism, and $e_\Gamma={\rm Id}_X$ is the identity map.

\medskip

Let $(X,\Gamma)$ be a t.d.s. and $x\in X$, then $\O(x,\Gamma)=\{tx: t\in \Gamma\}$ denotes the
{\em orbit} of $x$. A subset
$A\subseteq X$ is called {\em invariant} if $t a\subseteq A$ for all
$a\in A$ and $t\in \Gamma$. When $Y\subseteq X$ is a closed and
$\Gamma$-invariant subset of the system $(X, \Gamma)$, we say that the system
$(Y, \Gamma)$ is a {\em subsystem} of $(X, \Gamma)$. If $(X, \Gamma)$ and $(Y,
\Gamma)$ are two dynamical systems, their {\em product system} is the
system $(X \times Y, \Gamma)$, where $t(x, y) = (tx, ty)$.

\medskip

A system $(X,\Gamma)$ is called {\em minimal} if $X$ contains no proper
non-empty closed invariant subsets. $(X,\Gamma)$ is called {\em transitive} if
for all non-empty open sets $U,V$, there is some $t\in \Gamma$ such that $U\cap t^{-1}V\neq \emptyset$.
It is easy to verify that a system is
minimal iff every orbit is dense. A point $x\in X$ is called a {\em minimal} point if $(\overline{\O(x,\Gamma)}, \Gamma)$ is a minimal subsystem.
A system $(X,\Gamma)$ is {\em weakly mixing} if the product system $(X
\times X,\Gamma)$ is transitive.

When $\Gamma=\Z$, $(X,\Gamma)$ is determined by a homeomorphism $T$, i.e. $T$ is the transformation corresponding to $1$ of $\Z$. In this case, we usually denote $(X,\Z)$ by $(X,T)$.

\subsection{Furstenberg families}\
\medskip

\subsubsection{Furstenberg families} A collection $\G$ of subsets of $\Z$  is  {\em
a (Furstenberg) family} if it is hereditary upward, i.e. $B_1 \subset B_2$ and
$B_1 \in \G$ imply $B_2 \in \G$. A family $\G$ is called {\em
proper} \index{proper family} if it is neither empty nor the entire power set of $\Z$, or,
equivalently if $\Z \in \G$ and $\emptyset \not\in \G$. Any nonempty
collection $\mathcal{A}$ of subsets  of $\Z$ generates a family
$\G(\mathcal{A}) :=\{B \subset \Z: B \supset A$ for some $A \in
\mathcal{A}\}$. \index{$\G(\mathcal{A})$}

For a family $\G$, its {\em dual} \index{dual family} is the family $\G^{\ast}
:=\{B\subset \Z  : B \cap A \neq \emptyset \ \text{for  all} \ A
\in \G \}$. It is not hard to see that $\G^*=\{B \subset\Z:
\Z\setminus B \not \in \G\}$, from which we have that if $\G$ is a
family, then $(\G^*)^*=\G.$

\subsubsection{Filter and Ramsey property}

If a family $\G$ is closed under finite intersections and is proper,
then it is called a {\em filter}. A family $\G$ has the {\em Ramsey property} if $A=A_1\cup A_2\in \G$
implies that $A_1\in \G$ or $A_2\in \G$. It is well known that

\begin{lem}\cite[Lemma 9.4, Lemma 9.5]{F}\label{lem-2.1.}
A proper family has the Ramsey property if and only if its dual $\G^*$ is a filter. Let $\G$ be a family having Ramsey property, then for all $A\in \G$ and $B\in \G^*$, we have $A\cap B\in \G$.
\end{lem}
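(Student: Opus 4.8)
The plan is to treat the lemma as two separate assertions: the equivalence ``$\G$ proper with the Ramsey property $\Longleftrightarrow$ $\G^*$ is a filter'', and the joining statement ``$A\in\G,\ B\in\G^*\Rightarrow A\cap B\in\G$''. Before either, I would record two routine facts about duals. First, $\G^*$ is automatically hereditary upward: if $B\subset B'$ and $B\cap A\neq\emptyset$ for every $A\in\G$, then a fortiori $B'\cap A\neq\emptyset$. Second, when $\G$ is proper, $\G^*$ is proper as well --- indeed $\emptyset\notin\G$ forces $\Z\cap A=A\neq\emptyset$ for all $A\in\G$, so $\Z\in\G^*$; and $\G\ni\Z\neq\emptyset$ forces $\emptyset\notin\G^*$. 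Consequently ``$\G^*$ is a filter'' is equivalent to ``$\G^*$ is closed under pairwise intersection'' (finite intersections then following by induction). Throughout I would use the two identities stated just above the lemma: $\G^*=\{B\subset\Z:\Z\setminus B\notin\G\}$ and $(\G^*)^*=\G$.

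For the forward implication of the equivalence, assume $\G$ has the Ramsey property and pick $B_1,B_2\in\G^*$. By De Morgan, $\Z\setminus(B_1\cap B_2)=(\Z\setminus B_1)\cup(\Z\setminus B_2)$; were this union in $\G$, the Ramsey property would place $\Z\setminus B_1$ or $\Z\setminus B_2$ in $\G$, contradicting $B_1,B_2\in\G^*$ through the complement description of $\G^*$. Hence $\Z\setminus(B_1\cap B_2)\notin\G$, i.e. $B_1\cap B_2\in\G^*$. For the converse, assume $\G^*$ is a filter and take $A=A_1\cup A_2\in\G=(\G^*)^*$. If neither $A_i$ lay in $\G$, then $\Z\setminus A_1,\Z\setminus A_2\in\G^*$, so $\Z\setminus A=(\Z\setminus A_1)\cap(\Z\setminus A_2)\in\G^*$ by the filter property, which reads $A\notin\G$ --- a contradiction. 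Thus $A_1\in\G$ or $A_2\in\G$, i.e. $\G$ has the Ramsey property; properness of $\G$ was needed only to know $\G^*$ is genuinely a filter.

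For the joining statement, let $A\in\G$ and $B\in\G^*$, and decompose $A=(A\cap B)\cup(A\setminus B)$. The Ramsey property applied to this union yields $A\cap B\in\G$ (which is what we want) or $A\setminus B\in\G$; in the latter case heredity upward gives $\Z\setminus B\in\G$ (since $A\setminus B\subset\Z\setminus B$), and then $B\cap(\Z\setminus B)=\emptyset$ contradicts $B\in\G^*$. So $A\cap B\in\G$. Note this half needs only the Ramsey property and heredity, not properness.

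Honestly there is no real obstacle: the lemma is pure formal bookkeeping with a family and its dual. The only places that demand attention are keeping the complement characterization of $\G^*$ straight in every step --- that is where a sign slip would occur --- and noticing precisely where properness of $\G$ enters (only to upgrade ``closed under intersection'' to ``filter''), which is the kind of hypothesis that is easy to invoke in the wrong direction.
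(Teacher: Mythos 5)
Your proof is correct. The paper gives no proof of this lemma (it is quoted directly from Furstenberg's book, Lemmas 9.4 and 9.5), and your argument via the complement characterization $\G^*=\{B:\Z\setminus B\notin\G\}$ together with the decomposition $A=(A\cap B)\cup(A\setminus B)$ is exactly the standard one.
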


\subsection{IP systems}\
\medskip

Let $\F$ denote the family of all non-empty finite subsets of $\N$,
i.e. $\a\in \F$ iff $\a=\{i_1,i_2,\cdots, i_k\}\subset \N$, $i_1<i_2<\cdots
<i_k$ for some $k$. For $\a,\b\in \F$, $\a<\b$ (or $\b>\a$) if $\max
\a<\min \b$. A subset of $\F$
$$\F^{(1)}= FU(\{\a_i\}_{i=1}^{\infty}):=\{\bigcup_{i\in
\b}\a_i : \ \b\in \F\}$$ is called {\em IP-ring}, where
$\a_1<\a_2<\cdots<\a_n<\cdots$.

Let $\{n_i\}_{i=1}^\infty \subseteq \Z$. An {\em IP-set} generated by the sequence $\{n_i\}_{i=1}^{\infty}$ is the set
$$FS(\{n_i\}_{i=1}^{\infty}) = \{n_{i_1}+n_{i_2}+\cdots +n_{i_k}:
i_1<i_2<\cdots <i_k \ \text{for some $k\in \N$}\}.$$ If we denote
$n_\a=\sum_{i\in \a}n_i$, then
$$FS(\{n_i\}_{i=1}^{\infty})=\{n_\a\}_{\a \in \F}.$$
Note that we do not require the elements of $\{n_i\}$ are distinct.
A set is called an {\em IP$^*$-set} if it intersects any IP-set. We denote by $\F_{IP}$ and $\F_{IP}^*$ the families generated by all IP-sets and IP$^*$-sets respectively.

\medskip

Here are some equivalent versions of Hindman theorem \cite{Hi74}.

\begin{thm}\cite[Theorem 8.12]{F}
For any finite partition $\F=\bigcup_{i=1}^r C_i $, one of $C_i$
contains an $IP$-ring.
\end{thm}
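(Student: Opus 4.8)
The statement is the finite-unions form of Hindman's theorem, so the plan is to prove it by the Galvin--Glazer argument, using an idempotent ultrafilter on the semigroup $(\F,\cup)$. First I would set up the ambient object: let $\beta\F$ be the set of ultrafilters on the discrete set $\F$ (the Stone--Cech compactification of $\F$), with basic clopen sets $\widehat A=\{p:A\in p\}$, and extend $\cup$ to $\beta\F$ by declaring $A\in p\star q$ iff $\{\a\in\F:\{\b\in\F:\a\cup\b\in A\}\in q\}\in p$. Then $(\beta\F,\star)$ is a compact right-topological semigroup, associativity being the standard iterated-limit computation inherited from associativity of $\cup$. Here degenerate idempotents exist --- any principal ultrafilter at $\a$ is one, since $\a\cup\a=\a$ --- so I would pass to the closed set $\Gamma=\bigcap_{n\ge1}\widehat{\F_n}$, where $\F_n=\{\a\in\F:\min\a>n\}$: one checks that $\Gamma$ is nonempty (by compactness, since each $\F_n\neq\emptyset$), compact, and closed under $\star$ (if $\a,\b\in\F_n$ then $\a\cup\b\in\F_n$), hence itself a compact right-topological semigroup, so by Ellis's theorem it contains an idempotent $p=p\star p$.

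Since $p$ is an ultrafilter and $\F=\bigcup_{i=1}^{r}C_i$ is a finite partition, exactly one cell, say $C:=C_{i_0}$, lies in $p$. Following the usual bookkeeping I set $A^\star=\{\a\in A:\{\b\in\F:\a\cup\b\in A\}\in p\}$ for $A\in p$; from $A\in p\star p=p$ one gets $A^\star\in p$, and a one-line computation using commutativity and associativity of $\cup$ yields the recursion property that drives the construction: if $\a\in A^\star$ then $\{\b\in\F:\a\cup\b\in A^\star\}\in p$.

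I would then build the IP-ring recursively, producing $\a_1<\a_2<\cdots$ in $\F$ so that $\a_S:=\bigcup_{j\in S}\a_j$ lies in $C^\star$ for every $n$ and every nonempty $S\subseteq\{1,\dots,n\}$. Start with any $\a_1\in C^\star$. Given $\a_1<\cdots<\a_n$ with this property, each of the finitely many sets $\{\b\in\F:\a_S\cup\b\in C^\star\}$ (over nonempty $S\subseteq\{1,\dots,n\}$) lies in $p$ by the recursion property, and so do $C^\star$ and $\F_{\max\a_n}$ (the latter because $p\in\Gamma$); their intersection is a member of $p$, in particular nonempty, and any $\a_{n+1}$ drawn from it satisfies $\a_{n+1}>\a_n$, $\a_{n+1}\in C^\star$, and $\a_S\cup\a_{n+1}=\a_{S\cup\{n+1\}}\in C^\star$ for every nonempty $S\subseteq\{1,\dots,n\}$ --- so the induction hypothesis is preserved. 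In the limit $FU(\{\a_j\}_{j\ge1})\subseteq C^\star\subseteq C=C_{i_0}$, which is the asserted IP-ring.

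I expect the only genuine difficulty to be the soft-analysis setup at the start: checking that $\star$ is well defined, associative and right-topological on $\beta\F$, that $\Gamma$ is closed under $\star$, and hence that idempotents are available --- the essential point being that it is precisely the passage to $\Gamma$ that excludes the degenerate idempotents and legitimizes the increasing/disjointness bookkeeping in the recursion. After that the argument is routine. (There are other routes: a purely combinatorial one in the spirit of Baumgartner, or a dynamical one via IP-recurrence in an enveloping semigroup as in \cite{F}; the statement can also be deduced from the $\N$-version recalled in the introduction once that version is strengthened to allow arbitrarily fast-growing generators.)
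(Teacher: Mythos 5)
Your argument is correct. The paper offers no proof of this statement---it is quoted verbatim from \cite[Theorem 8.12]{F}---so there is nothing internal to compare against; what you have written is the standard Galvin--Glazer idempotent-ultrafilter proof of the finite-unions form of Hindman's theorem, and all the key points check out: the operation $\star$ makes $\beta\F$ a compact semigroup with all right translations $p\mapsto p\star q$ continuous; $\Gamma=\bigcap_{n\ge 1}\widehat{\F_n}$ is a nonempty closed subsemigroup (since $\min(\a\cup\b)=\min(\min\a,\min\b)$, one has $\F_n\in p\star q$ whenever $\F_n\in p$ and $\F_n\in q$), so Ellis--Numakura applies; the identity $(A^\star)/\a=(A/\a)^\star$ gives the recursion property; and intersecting with $\F_{\max\a_n}$ at each stage is exactly what forces $\a_1<\a_2<\cdots$, so that $FU(\{\a_j\}_{j\ge 1})$ is an IP-ring in the paper's sense. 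The one cosmetic remark is that ``exactly one cell lies in $p$'' should be ``at least one,'' which is all you use. As for the comparison with the cited source: Furstenberg's Chapter~8 proof reaches the same idempotent lemma through the enveloping semigroup of a dynamical system rather than through $\beta\F$ directly, so the two routes are cousins---both ultimately rest on the existence of an idempotent in a compact right-topological semigroup---with yours being the more streamlined, purely combinatorial packaging and Furstenberg's fitting the recurrence-theoretic framework the rest of his book (and this paper) is built on.
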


\begin{thm}\cite[Proposition 9.6]{F}
$\F_{IP}$ has  Ramsey property.
\end{thm}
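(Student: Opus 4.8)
The plan is to derive the Ramsey property of $\F_{IP}$ directly from the Hindman-type partition theorem for $\F$ stated just above (Theorem 8.12), by transporting a covering of $\Z$ back to a covering of $\F$ along the map $\a\mapsto n_\a$. First I would unwind the definitions: a set $A$ lies in $\F_{IP}$ exactly when it contains some IP-set $FS(\{n_i\}_{i=1}^\infty)=\{n_\a\}_{\a\in\F}$, where $n_\a=\sum_{i\in\a}n_i$. So suppose $A\in\F_{IP}$ with $\{n_\a\}_{\a\in\F}\subseteq A$ and write $A=A_1\cup A_2$; the goal is to produce an IP-set contained in $A_1$ or in $A_2$.

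Next I would pull the covering of $A$ back to $\F$. Define $C_j=\{\a\in\F:\ n_\a\in A_j\}$ for $j=1,2$. Since every $n_\a$ lies in $A=A_1\cup A_2$, we get $\F=C_1\cup C_2$, a finite covering of $\F$ (which one may turn into a genuine partition, if insisted, by assigning the overlap to $C_1$). Applying Theorem 8.12 yields an index $j\in\{1,2\}$ and an IP-ring $\F^{(1)}=FU(\{\a_i\}_{i=1}^\infty)\subseteq C_j$, with $\a_1<\a_2<\cdots$ in $\F$. The last step converts this IP-ring back into a genuine sub-IP-set of $A_j$. Because $\a_1<\a_2<\cdots$ forces the $\a_i$ to be pairwise disjoint, additivity of $\a\mapsto n_\a$ over disjoint unions gives, for every $\b\in\F$,
\[
n_{\bigcup_{i\in\b}\a_i}=\sum_{i\in\b}n_{\a_i}.
\]
Setting $m_i:=n_{\a_i}$, this says $FS(\{m_i\}_{i=1}^\infty)=\{n_\gamma:\gamma\in\F^{(1)}\}$. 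As $\F^{(1)}\subseteq C_j$, each such $n_\gamma$ lies in $A_j$, whence $FS(\{m_i\})\subseteq A_j$ and $A_j\in\F_{IP}$; this is precisely the Ramsey property.

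I do not expect a genuine obstacle once Theorem 8.12 is available, since the content of the statement is really Hindman's theorem and the argument above is essentially bookkeeping. The one point needing care is that bookkeeping itself, namely checking that an IP-ring in $\F$ maps, under $\a\mapsto n_\a$, onto a \emph{sub}-IP-set of $\Z$; this uses exactly the disjointness built into the definition of an IP-ring together with the additivity $n_{\a\cup\b}=n_\a+n_\b$ for disjoint $\a,\b$. Were one to insist on a self-contained argument avoiding Theorem 8.12, the real work would be a proof of Hindman's theorem itself (for instance via idempotent ultrafilters in $\beta\N$, or via Furstenberg's dynamical argument), which is where all the difficulty lies.
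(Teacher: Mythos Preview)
The paper does not supply its own proof of this statement; it is quoted as a known result with a citation to \cite[Proposition~9.6]{F}. Your argument is correct and is exactly the standard derivation: pull back the two-cell cover of $A$ to a cover $\F=C_1\cup C_2$ via $\a\mapsto n_\a$, invoke the IP-ring partition theorem (the paper's Theorem~8.12) to find $\F^{(1)}\subseteq C_j$, and then use disjoint additivity $n_{\a\cup\b}=n_\a+n_\b$ to convert the IP-ring into a sub-IP-set $FS(\{n_{\a_i}\})\subseteq A_j$. There is nothing to add.
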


Let $A=\{n_\a\}_{\a \in \F}$ be an IP-set. A subset $B$ of $A$ is an {\em IP-subset} of $A$ if it is an IP-set itself, i.e. there is some sequence $\{m_i\}_{i=1}^\infty\subset A$ such that $B=\{m_\a\}_{\a\in \F}\subset A$.
It is easy to see that $B$ is an IP-subset of $A$ if and only if there is some IP-ring $\F^{(1)}$ such that
$B=\{n_\a\}_{\a\in \F^{(1)}}$ \cite{F82}.

\begin{de}
Let $A= \{n_\a\}_{\a \in \F}$ be an IP-set. The family generated by all IP-subset of $\{n_\a\}_{\a \in \F}$ is denoted by $\G_A$, that is
\begin{equation*}
\G_A=\{B\subset\Z: B\supset \{n_\a\}_{\a \in \F^{(1)}}, \F^{(1)} \ \text{is an IP-ring}  \}.
\end{equation*}	
\end{de}

\begin{rem}\label{rem-2.5}
\begin{enumerate}
  \item By Hindman theorem, for any IP-set $A= \{n_\a\}_{\a \in \F}$, $\G_A$ has Ramsey property and its dual $\G^*_A$ is a filter. By Lemma \ref{lem-2.1.}, if $A\in \G_A$ and $B\in \G_A^*$, then $A\cap B\in \G_A$.
  \item When $A=\Z$, then $\G_A=\F_{IP}$ and $\G_A^*=\F_{IP}^*$.
\end{enumerate}

\end{rem}

\subsection{IP-mixing and mild mixing}\
\medskip

\begin{de}
Let $(X,\{T_\a\}_{\a\in \F})$ be a IP-system. $(X,\{T_\a\}_{\a\in \F})$  is called {\em mixing} if for all non-empty open subsets $U,V$ of $X$, and any IP-ring $\F^{(1)}$, there is some $\a\in \F^{(1)}$, $$U\cap T^{-1}_\a V\neq \emptyset.$$
\end{de}

\medskip

Let $(X,T)$ be a t.d.s. and let
$\{n_\a\}_{\a\in \F}$ be an IP-set of $\Z$. Let $T_\a=T^{n_{\a} }$
for $\a\in \F$. Then $\{T_\a\}_{\a\in \F}$ is an IP-system.
If $\{T_\a\}_{\a\in \F}$ is mixing as an IP-system, we also say that it is mixing along $\{n_\a\}_{\a\in \F}$. To be precise, we have

\begin{de}
Let $(X,T)$ be a t.d.s. and let $\{n_\a\}_{\a\in \F}$ be an IP-set. $(X,T)$ is said to be {\em mixing along $\{n_\a\}_{\a\in \F}$} if for all non-empty open subsets $U,V$ of $X$, and any IP-ring $\F^{(1)}$, there is some $\a\in \F^{(1)}$, $$U\cap T^{-n_\a} V\neq \emptyset.$$
That is, $$N(U,V)=\{n\in \Z: U\cap T^{-n}V\neq \emptyset\} \in \G_A^*.$$
\end{de}

\begin{rem}
When the IP-set $\{n_\a\}_{\a\in \F}$ is $\Z$, we cover the definition of topological mild mixing. That is,
a minimal system $(X,T)$ is topologically mildly mixing if all non-empty open subsets $U,V$,
$N(U,V)=\{n\in \Z: U\cap T^{-n}V\neq \emptyset\}\in \F_{IP}^*.$
\end{rem}

Since $\G_A^*$ is a filter, we have the following important observation.
\begin{prop}\label{prop2.9}
Let $A=\{n_\a\}_{\a\in \F}$ be an IP-set and $d\in \N$. If $(X,T_1),\ldots, (X,T_d)$ are mixing along $\{n_\a\}_{\a\in \F}$ systems, then
$(X_1\times \ldots\times X_d, T_1\times \ldots \times T_d)$ is mixing along $\{n_\a\}_{\a\in \F}$.

In particular, if $(X,T_1),\ldots, (X,T_d)$ are  topologically mildly mixing minimal systems. Then
$(X_1\times \ldots\times X_d, T_1\times \ldots \times T_d)$ is  topologically mildly mixing.
\end{prop}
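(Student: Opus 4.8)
The plan is to derive the product statement formally from the fact, recorded in Remark \ref{rem-2.5}(1), that $\G_A^*$ is a filter. Write $(Z,S)=(X_1\times\cdots\times X_d,\ T_1\times\cdots\times T_d)$ and, for a system $(Y,R)$ and non-empty open $U,V\subseteq Y$, abbreviate $N_R(U,V)=\{n\in\Z:U\cap R^{-n}V\neq\emptyset\}$. Since $\G_A^*$ is a Furstenberg family it is hereditary upward, so it is enough to verify $N_S(W,W')\in\G_A^*$ when $W,W'$ range over a basis of the product topology: for arbitrary non-empty open $W_0,W_0'\subseteq Z$ one may pick basic open $W\subseteq W_0$ and $W'\subseteq W_0'$, and then $N_S(W,W')\subseteq N_S(W_0,W_0')$ forces $N_S(W_0,W_0')\in\G_A^*$ as well. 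Thus only ``boxes'' $W=U_1\times\cdots\times U_d$ and $W'=V_1\times\cdots\times V_d$, with each $U_i,V_i$ non-empty open in $X_i$, need to be treated.

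For such boxes the return-time set splits as a finite intersection,
\begin{equation*}
N_S(U_1\times\cdots\times U_d,\ V_1\times\cdots\times V_d)=\bigcap_{i=1}^d N_{T_i}(U_i,V_i),
\end{equation*}
since a point $(x_1,\ldots,x_d)$ realizes $W\cap S^{-n}W'\neq\emptyset$ exactly when, independently in each coordinate, $x_i$ realizes $U_i\cap T_i^{-n}V_i\neq\emptyset$. By hypothesis each $(X_i,T_i)$ is mixing along $\{n_\alpha\}_{\alpha\in\F}$, so $N_{T_i}(U_i,V_i)\in\G_A^*$ for every $i$; and by Remark \ref{rem-2.5}(1) the dual family $\G_A^*$ is a filter, hence closed under finite intersections. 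Therefore the displayed set lies in $\G_A^*$, and $(Z,S)$ is mixing along $\{n_\alpha\}_{\alpha\in\F}$. The ``in particular'' clause is the case $A=\Z$: by Remark \ref{rem-2.5}(2) one has $\G_\Z^*=\F_{IP}^*$, so the same computation shows $N_S(W,W')\in\F_{IP}^*$ for all non-empty open $W,W'\subseteq Z$, which is the mixing condition appearing in the definition of topological mild mixing.

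I do not expect any genuine obstacle here: the whole content is the coordinatewise factorization $N_S(\prod_i U_i,\prod_i V_i)=\bigcap_i N_{T_i}(U_i,V_i)$ together with the filter property of $\G_A^*$, both of which are immediate once stated. The only point worth making explicit is the routine reduction to a basis of the product topology, which is legitimate precisely because membership in a Furstenberg family is preserved under passing to supersets.
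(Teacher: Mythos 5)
Your proof is correct and follows exactly the route the paper intends: the paper offers no written proof beyond the remark ``Since $\G_A^*$ is a filter, we have the following important observation,'' and your argument --- reduce to product boxes, factor the return-time set as $\bigcap_{i=1}^d N_{T_i}(U_i,V_i)$, and apply the filter property of $\G_A^*$ (with $\G_\Z^*=\F_{IP}^*$ for the mild mixing case) --- is precisely the content that remark is pointing to.
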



For an IP set $\{n_\a\}_{\a\in \F}$, if $(X,T)$ is {mixing along $\{n_\a\}_{\a\in \F}$} and $k\in \Z\setminus\{0\}$, we do not know if $(X,T^k)$ is mixing along $\{n_\a\}_{\a\in \F}$. But it is true for a mildly mixing system.

\begin{prop}\label{prop2.10}
Let $k\in \Z\setminus \{0\}$. If $(X,T)$ is  topologically mildly mixing, then so does $(X,T^k)$.
\end{prop}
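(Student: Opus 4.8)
The plan is to show that for any IP-set $A=\{n_\a\}_{\a\in\F}$ along which $(X,T)$ is mixing, the system $(X,T^k)$ is also mixing along a suitable IP-subset of $A$; taking $A=\Z$ then yields the topological mild mixing of $(X,T^k)$ via the remark preceding Proposition \ref{prop2.10}. First I would fix non-empty open sets $U,V\subset X$ and an IP-ring $\F^{(1)}$, and I want to find $\a\in\F^{(1)}$ with $U\cap T^{-kn_\a}V\neq\emptyset$. The natural move is to consider the sequence $\{kn_\a\}_{\a\in\F}$: since $k n_{\a\cup\b}=kn_\a+kn_\b$ whenever $\a\cap\b=\emptyset$, the set $\{kn_\a\}_{\a\in\F^{(1)}}$ is itself an IP-set (it is $FS$ of the sequence $\{k n_{\a_i}\}$ where $\{\a_i\}$ generates $\F^{(1)}$). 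So if $(X,T)$ were known to be mixing along every IP-set, we would be done immediately. But the hypothesis only gives mixing along the specific IP-set $\Z$ (mild mixing); what we actually need is that mixing of $(X,T)$ along $\Z$, i.e. $N(U,V)\in\F_{IP}^*$ for all open $U,V$, already forces mixing of $(X,T^k)$ along $\Z$.

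So the cleaner route is: I would show directly that $N_{T^k}(U,V)=\{n\in\Z: U\cap T^{-kn}V\neq\emptyset\}$ is an IP$^*$-set, using that $(X,T)$ is topologically mildly mixing. The key step is a reduction modulo $k$. Note $N_{T^k}(U,V)\supset \{n: kn\in N_T(U,V)\}$, and more usefully, for each residue $r\in\{0,1,\ldots,|k|-1\}$ one can try to capture an IP-set living in $k\Z+r$. Here I would invoke Proposition \ref{prop2.10}'s likely companion fact about minimality: a topologically mildly mixing minimal system $(X,T)$ has $(X,T^k)$ minimal as well (this is where minimality is used — it is standard that for a minimal $(X,T)$ with $(X,T)$ weakly mixing, $T^k$ is minimal; topological mild mixing implies weak mixing). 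Once $(X,T^k)$ is minimal, the strategy is: given an arbitrary IP-set $B=\{m_\a\}_{\a\in\F}$ in $\Z$, I must show $N_{T^k}(U,V)\cap B\neq\emptyset$. By Hindman's theorem applied to the partition of $\F$ according to the residue of $m_\a$ modulo $k$, there is an IP-subring $\F^{(1)}$ on which $m_\a\equiv r_0\pmod k$ is impossible for the full sums unless... — more carefully, $m_{\a_i}$'s residues need not stabilize, but $m_{\a}\bmod k$ as $\a$ ranges over $\F^{(1)}$ takes the form $\sum_{i\in\b} m_{\a_i}\bmod k$, which is again an IP-set in $\Z/k\Z$; since $\Z/k\Z$ is finite, after passing to a further IP-ring we may assume all $m_\a$ with $\a\in\F^{(1)}$ lie in a fixed coset, in fact (absorbing into the generators) in $k\Z$ itself after a shift. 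This lets me write $m_\a=k m'_\a + c$ for an IP-set $\{m'_\a\}$ and a constant $c$, and then $T^{-m_\a}=T^{-c}(T^k)^{-m'_\a}$, reducing the question to whether $U\cap T^{-c}T^{-km'_\a}V=\emptyset$, i.e. to mixing of $(X,T)$ along the IP-set $\{km'_\a\}$ in the pair $(U,T^cV)$ — wait, this still asks mixing of $T$ along a sparse IP-set, not along $\Z$.

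To close that gap, the decisive observation is that we are free to use $(X,T)$ topologically mildly mixing in full strength: $N_T(U,T^cV)$ is an IP$^*$-set, hence it meets the IP-set $\{km'_\a\}$, giving $\a$ with $km'_\a\in N_T(U,T^cV)$, i.e. $U\cap T^{-km'_\a}T^{-c}V\neq\emptyset$, i.e. $U\cap T^{-m_\a}V\neq\emptyset$ with $m_\a\in B$. Therefore $N_{T^k}(U,V)\cap B\neq\emptyset$, and since $B$ was an arbitrary IP-set and $U,V$ arbitrary open sets, $(X,T^k)$ is topologically mildly mixing. (Minimality of $(X,T^k)$, needed for it to qualify as "topologically mildly mixing minimal," follows from weak mixing of $(X,T)$ plus minimality.) The main obstacle I anticipate is the bookkeeping in the residue reduction: ensuring that after passing to an IP-ring the shift $c$ is genuinely constant and that $\{m'_\a\}$ is still a bona fide IP-set indexed compatibly — this is exactly the kind of "pass to an IP-subset" argument enabled by Hindman's theorem and the Ramsey property of $\F_{IP}$ recorded above, so it is routine but must be done carefully.
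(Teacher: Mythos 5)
Your first paragraph already contains the paper's entire proof, and the reason you give for abandoning it rests on a misreading of the hypothesis. Topological mild mixing says that $N(U,V)=\{n\in\Z:U\cap T^{-n}V\neq\emptyset\}$ is an IP$^*$-set, and by definition an IP$^*$-set meets \emph{every} IP-set --- that is exactly ``mixing along every IP-set,'' not merely some weaker property attached to the single IP-set $\Z$. So given an arbitrary IP-set $A=FS(\{n_i\}_{i=1}^\infty)$, the set $kA=FS(\{kn_i\}_{i=1}^\infty)$ is again an IP-set (as you yourself observe), hence it meets $N(U,V)$; a point $kn_\a\in N(U,V)$ gives $U\cap T^{-kn_\a}V\neq\emptyset$, i.e.\ $n_\a\in\{n:U\cap T^{-kn}V\neq\emptyset\}\cap A$, and you are done. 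This two-line argument is precisely the paper's proof.

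The residue-class detour you substitute for it is not only unnecessary but ends in a non sequitur. After arranging $m_\a=km'_\a+c$ on an IP-subring, you analyze $T^{-m_\a}=T^{-c}T^{-km'_\a}$ and conclude from $U\cap T^{-m_\a}V\neq\emptyset$ that $N_{T^k}(U,V)\cap B\neq\emptyset$. But $U\cap T^{-m_\a}V\neq\emptyset$ only says $m_\a\in N_T(U,V)$; membership of $m_\a$ in $N_{T^k}(U,V)$ requires $U\cap T^{-km_\a}V\neq\emptyset$, which your computation never addresses. So the detour, even if the Hindman bookkeeping were carried out, proves the wrong statement. The remarks about minimality of $T^k$ are tangential here: the proposition and the paper's proof concern only the IP$^*$ property of $N_{T^k}(U,V)$.
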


\begin{proof}
Let $A=FS(\{n_i\}_{i=1}^{\infty})$ be an IP-set. Then $kA:=FS(\{kn_i\}_{i=1}^{\infty})$ is also an IP-set. Since $(X,T)$ is  topologically mildly mixing, we have $$N(U,V)=\{n\in \Z: U\cap T^{-n}V\neq \emptyset\}\cap kA\neq \emptyset.$$
It follows that
$$\{n\in \Z: U\cap T^{-kn}V\neq \emptyset\}\cap A\neq \emptyset.$$
Thus $(X,T^k)$ is  topologically mildly mixing.
\end{proof}





\subsection{Bergelson-Leibman theorem}\
\medskip

We will need the following Bergelson-Leibman' theorem.

\begin{thm}[Bergelson-Leibman]\cite{Bergelson06}\label{BL}
Let $(X ,\Gamma)$ be a t.d.s. with $\Gamma$ an abelian group, and let $d, k\in \N$. Let $T_1,\ldots ,T_d\in \Gamma$, and let $p_{i,j}$ be integral polynomials with $p_{i,j}(0)=0$, $i=1,2,\ldots,k, j=1,2,\ldots, d$. Then, for any positive $\ep$, there exist $x\in X$ and $n\in \N$ such that
\begin{equation}\label{a11}
 \rho(T_1^{p_{i,1}(n)}T_2^{p_{i,2}(n)}\ldots T_d^{p_{i,d}(n)}x,x)<\ep
\end{equation}
for all $i=1,\ldots ,k$ simultaneously. Moreover, the set
$$\{n\in \Z : \forall \ep >0, \exists x\in X \ \text{ such that} \ \forall i\in \{1,2,\ldots ,k\}, \eqref{a11}\ \text{is satisfied} \}$$
is an IP$^*$-set.
\end{thm}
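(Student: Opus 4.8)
The plan is to prove, for each fixed $\ep>0$, the formally stronger statement that along \emph{every} IP-ring $\F^{(1)}$ the approximate return is realized: there are $\a\in\F^{(1)}$ and $x\in X$ with $\rho(g_i(n_\a)x,x)<\ep$ for all $i$, where $g_i(n)=T_1^{p_{i,1}(n)}\cdots T_d^{p_{i,d}(n)}$ and $\{n_\a\}_{\a\in\F}$, $n_\a=\sum_{i\in\a}n_i$, is the $\F$-sequence attached to an arbitrary sequence $\{n_i\}$. Since every IP-set has precisely this form, applying the statement with $\F^{(1)}=\F$ shows that the good set of $n$ meets every IP-set, hence is IP$^*$; the formulation along sub-IP-rings is the extra room the induction needs in order to chain, and the two assertions of the theorem follow. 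As a first reduction, by Zorn's lemma $(X,\Gamma)$ contains a minimal subsystem and it suffices to treat that case; minimality and compactness of $X$ are what will let the inductive step upgrade ``recurrence at one point'' into something uniform enough to be re-used.

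The proof proper is by PET-induction on the polynomial family $\mathcal P=\{P_1,\ldots,P_k\}$, $P_i=(p_{i,1},\ldots,p_{i,d})$. One attaches to $\mathcal P$ a well-founded weight recording, degree by degree, the number of equivalence classes of leading behaviour, where $P_i\sim P_j$ means that $g_ig_j^{-1}$ has degree strictly less than $\max(\deg g_i,\deg g_j)$. The base case is the minimal-weight one, which after the standard normalizations reduces to the IP-refined topological multiple recurrence theorem for finitely many commuting homeomorphisms --- the IP form of van der Waerden's theorem in the spirit of Furstenberg--Weiss --- itself proved by an auxiliary induction on the number of transformations together with Hindman's theorem, each stage colouring $\F$ by which small ball a relevant image point falls in and passing to a sub-IP-ring on which it is constant. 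For the inductive step one picks a pivot $g_{i_0}$ of least degree and performs a van der Corput type differencing via $n\mapsto n+m$ followed by precomposition with $g_{i_0}(m)^{-1}$: for each fixed $m$ the family $\{g_i(n+m)g_{i_0}(m)^{-1}\}_i$ in the variable $n$ has strictly smaller weight, so the induction hypothesis applies to it along a sub-IP-ring; a Bolzano--Weierstrass extraction of a limit point in $X$, together with a further application of Hindman's theorem to accommodate the two parameters $m$ and $n$ in one IP-ring, then reassembles approximate recurrence for the original family $\{g_i\}$ along a still smaller sub-IP-ring. Because every reduction is to a \emph{sub}-IP-ring of the one supplied, the final index lies in the originally prescribed $\F^{(1)}$, which is the IP$^*$ bookkeeping.

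Two things are the real work. First, the weight must be set up so that differencing provably decreases it, and the delicate point is controlling degenerations --- after the substitution some $g_i(n+m)g_{i_0}(m)^{-1}$ could a priori become independent of $n$, or coincide with another member --- which is exactly where the hypothesis $p_{i,j}(0)=0$ (so $g_i(0)=e_\Gamma$, preventing such collapse for generic $m$) together with the usual PET bookkeeping is spent. Second, and more subtly, the inductive hypothesis has to be packaged so as to be \emph{combinable}: one wants not merely a good $\a$ but a controlled sub-IP-ring and a large (typically residual) set of admissible base points, so that the two-parameter recurrence produced by differencing can be closed up by the compactness argument; here one uses that for an IP-set $A$ the dual family $\G_A^*$ is a filter, so finitely many such largeness requirements can be imposed at once. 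I expect getting this packaging exactly right --- rather than any individual estimate --- to be the crux, since it is precisely the mechanism that delivers the IP$^*$ conclusion instead of a weaker ``syndetic for each $\ep$'' one.
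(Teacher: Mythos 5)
First, be aware that the paper does not prove this theorem: it is imported from \cite{Bergelson06} (where it comes out of the Bergelson--Leibman polynomial Hales--Jewett machinery) and is then used as a black box in Example 1 and in Step 1 of the proof of Theorem \ref{main-thm}. So you are supplying a proof where the authors supply a citation, and your sketch must be judged on its own. Its overall shape --- pass to a minimal subsystem, prove the fixed-$\ep$ statement along every IP-ring, PET-induct on the weight of the family with the Furstenberg--Weiss IP van der Waerden theorem as the base case, and close the induction by a finite focusing argument with Hindman's theorem doing the IP-ring bookkeeping --- is a viable route. (One caveat on your reduction: the displayed ``moreover'' set, read literally with $\forall\ep$ inside, equals by compactness the set of $n$ for which the $g_i(n)$ have a common fixed point, and that set is not IP$^*$ in general --- for an irrational rotation and $g_1(n)=T^n$ it is $\{0\}$. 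The statement that is true, and the one the paper actually invokes, is the fixed-$\ep$ version, which is what you address.)

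The genuine gap is in your inductive step. For fixed $m$ the family $\{g_i(n+m)g_{i_0}(m)^{-1}\}_i$ does \emph{not} have strictly smaller weight than $\{g_i\}_i$: shifting $n\mapsto n+m$ preserves the leading coefficient of every exponent polynomial in $n$, and multiplying by the constant $g_{i_0}(m)^{-1}$ changes nothing, so the weight vector is unchanged and the induction as you describe it never terminates. The operation that works is to divide by the pivot evaluated at the \emph{running} variable, i.e., to pass to $h_{i,m}(n)=g_i(m)^{-1}g_i(n+m)\,g_{i_0}(n)^{-1}$ with $g_{i_0}$ of minimal weight; then exactly the classes equivalent to $g_{i_0}$ drop in weight and the PET order decreases (this is the substitution $g_{t,j}(n)=g_t(n_{\a_j})^{-1}g_t(n+n_{\a_j})f(n)^{-1}$ used in Step 2 of the paper's proof of Theorem \ref{main-thm}, and it is legitimate only because the recurrence point is free, so one may replace $x$ by $g_{i_0}(n)x$). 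Relatedly, your ``Bolzano--Weierstrass reassembly'' must be organized so that each appeal to the induction hypothesis asks only that all images of a \emph{free} point land in one and the same prescribed ball --- a recurrence demand, available in any minimal system via a covering $X=\bigcup_{j\le N}S^{-j}B$ --- and never that one prescribed open set be carried into a different prescribed open set along the IP-ring: the latter is transitivity, and it fails without mixing hypotheses (for an irrational rotation, $\{n\in\Z: U\cap T^{-n}V\neq\emptyset\}$ is not IP$^*$ when $U,V$ are small and disjoint, because $\{n:\|n\alpha\|<\d\}$ is IP$^*$ and hence contains an IP-set missing it). The finite color-focusing over a $\delta$-net of $X$, with all targets equal to a shrinking ball around the previously constructed point, is what respects this constraint; as written, your sketch does not show that it does, and with the differencing you propose it cannot.
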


\subsection{$\{g_1,\ldots, g_k\}$-transitive along $\{n_\a\}_{\a\in \F}$}\
\medskip


Let $({X},\Gamma)$ be a t.d.s., and $A=\{n_\a\}_{\a\in \F}$ be an IP-set.
For $d,k\in \N$, let $T_1,\ldots,T_d\in \Gamma$, and $p_{i,j}(n)$ be integral polynomials, $ 1\le i\le k, 1\le j\le d$ and
\begin{equation*}
  g_i(n)=T_1^{p_{i,1}(n)}\cdots T_d^{p_{i,d}(n)}, \quad i=1,2,\ldots, k.
\end{equation*}

\begin{de}
We call $(X,\Gamma)$ {\em $\{g_1,\ldots, g_k\}$-transitive along $\{n_\a\}_{\a\in \F}$} if for all non-empty open sets $U_1,\ldots, U_d,V_1,\ldots,V_d,\a_0\in \F$,and any IP-ring $\F ^{(1)}$, there is some $\a\in \F ^{(1)}$ such that $\a>\a_0$ and
\begin{equation*}
  U_1\times \ldots \times U_k \cap g_1^{-1}(n_\a)\times \ldots\times g_k^{-1}(n_\a)(V_1\times \ldots \times V_d)\neq \emptyset.
\end{equation*}
\end{de}

The following lemma is a generalization of Lemma 3 of \cite{KO}. For completeness, we include a proof.

\begin{lem}\label{lem-KO} \label{freedom}
Let $(X,\Gamma)$ be a t.d.s. and $T\in\Gamma$. Let $g_1,\ldots, g_k$ be as above.
If $({X}, \Gamma)$ is $\{g_1,\ldots, g_k\}$-transitive along $\{n_\a\}_{\a\in \F}$, then for all non-empty open sets $V_1,\ldots ,V_k$ of $X$, and any IP-ring $\F ^{(1)}$, there is an increasing sequence $\{\a_n\}_{n=0}^\infty\subseteq \F ^{(1)}$ such that $|n_{\a_n}|-n>0$ for all $n$, and for each $i\in \{1,2,\ldots,k\}$, there is a descending sequence $\{V_i^{(n)}\}_{n=0}^\infty$ of open subsets of $V_i$ such that for each $n\ge 0$ one has that
\begin{equation*}
  g_i(n_{\a_j})T^{-j}V_i^{(n)}\subseteq V_i, \quad \text{for all}\quad 0\le j\le n.
\end{equation*}
\end{lem}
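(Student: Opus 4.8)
**The plan is to build the sequences $\{\a_n\}$ and $\{V_i^{(n)}\}$ recursively on $n$, using the $\{g_1,\ldots,g_k\}$-transitivity hypothesis at each step to produce the next index $\a_n$ and to shrink the open sets.** Start the recursion at $n=0$: we want $\a_0\in\F^{(1)}$ with $|n_{\a_0}|-0>0$ (i.e. $n_{\a_0}\neq 0$, which is automatic for all but finitely many $\a$ in an IP-ring unless the IP-set is trivial, and in any case one can pass to a sub-IP-ring on which it holds) and, for each $i$, an open $V_i^{(0)}\subseteq V_i$ with $g_i(n_{\a_0})T^0 V_i^{(0)}=g_i(n_{\a_0})V_i^{(0)}\subseteq V_i$. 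To get this, apply the $\{g_1,\ldots,g_k\}$-transitivity along $\{n_\a\}_{\a\in\F}$ with the open sets $U_i=V_i$ and with $\b_0$ a suitably large element of $\F$ (to force $\a_0>\b_0$ and hence, after finitely many exclusions, $|n_{\a_0}|>0$): this yields $\a_0\in\F^{(1)}$ and a point witnessing $V_1\times\cdots\times V_k\cap g_1^{-1}(n_{\a_0})\times\cdots\times g_k^{-1}(n_{\a_0})(V_1\times\cdots\times V_k)\neq\emptyset$. Pulling back a small neighborhood of that witness point inside each $V_i$ and intersecting with $g_i(n_{\a_0})^{-1}V_i$ gives the required $V_i^{(0)}$.

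**For the inductive step, suppose $\a_0<\a_1<\cdots<\a_{n-1}$ in $\F^{(1)}$ and the descending open sets $V_i^{(0)}\supseteq\cdots\supseteq V_i^{(n-1)}\subseteq V_i$ have been constructed with $g_i(n_{\a_j})T^{-j}V_i^{(m)}\subseteq V_i$ for all $0\le j\le m\le n-1$.** We now want $\a_n>\a_{n-1}$ with $|n_{\a_n}|>n$ and open $V_i^{(n)}\subseteq V_i^{(n-1)}$ such that in addition $g_i(n_{\a_n})T^{-n}V_i^{(n)}\subseteq V_i$. Apply $\{g_1,\ldots,g_k\}$-transitivity once more, this time with the open sets $U_i=T^{-n}V_i^{(n-1)}$ and $V_i=V_i$ (and with $\b_0$ chosen past $\a_{n-1}$ and large enough that $\a_n>\b_0$ forces $|n_{\a_n}|>n$, using that in an IP-ring $|n_\a|\to\infty$ along the net, or else passing to a sub-IP-ring). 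This produces $\a_n\in\F^{(1)}$, $\a_n>\a_{n-1}$, and a point $x$ with $T^{-n}x\in V_i^{(n-1)}$ (so in particular, after applying $g_i(n_{\a_n})$, landing in $V_i$ for each $i$) — more precisely a point in $U_1\times\cdots\times U_k$ mapped by $g_i^{-1}(n_{\a_n})$ into $V_i$. Let $V_i^{(n)}$ be a small open neighborhood of that witness, intersected with $V_i^{(n-1)}$ and with $(g_i(n_{\a_n})T^{-n})^{-1}V_i$; continuity of $g_i(n_{\a_n})$ and $T^{-n}$ makes this nonempty and open. By construction $V_i^{(n)}\subseteq V_i^{(n-1)}$ and $g_i(n_{\a_n})T^{-n}V_i^{(n)}\subseteq V_i$, while the relations $g_i(n_{\a_j})T^{-j}V_i^{(n)}\subseteq V_i$ for $j<n$ are inherited from $V_i^{(n)}\subseteq V_i^{(n-1)}\subseteq\cdots\subseteq V_i^{(j)}$ and the inductive hypothesis. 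This closes the recursion.

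**The one genuine subtlety** — the analogue of the extra argument in Lemma 3 of \cite{KO} — is the bookkeeping that keeps all the previously-imposed containments valid while shrinking: one must be careful that at stage $n$ the set $U_i=T^{-n}V_i^{(n-1)}$ we feed into the transitivity hypothesis is nonempty and open (it is, since $V_i^{(n-1)}$ is and $T$ is a homeomorphism), and that the transitivity hypothesis is applied with the right "starting index" $\b_0\in\F$ so that the resulting $\a_n$ simultaneously exceeds $\a_{n-1}$ and satisfies $|n_{\a_n}|>n$. The condition $|n_{\a_n}|-n>0$ should be arranged by a preliminary observation that along any IP-ring one may, after passing to a further IP-ring, assume $|n_\a|$ is as large as desired whenever $\min\a$ is large — alternatively by simply discarding the finitely many bad $\a$; this is the only place where a small amount of care beyond the direct recursion is needed. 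Everything else is a routine neighborhood-shrinking argument, so I expect the write-up to be short once the recursive scheme is set up.
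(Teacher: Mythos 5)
Your proposal is correct and follows essentially the same recursive scheme as the paper's proof: at stage $n$ one feeds $U_i=T^{-n}V_i^{(n-1)}$ into the transitivity hypothesis, obtains $\a_n>\a_{n-1}$ with $|n_{\a_n}|>n$, and sets $V_i^{(n)}=V_i^{(n-1)}\cap\big(g_i(n_{\a_n})T^{-n}\big)^{-1}V_i$, with the earlier containments inherited from the nesting. The only cosmetic difference is that the paper takes $V_i^{(0)}=g_i^{-1}(n_{\a_0})V_i\cap V_i$ outright rather than shrinking a neighborhood of a witness point, and it asserts the bound $|n_{\a_n}|-n>0$ without the sub-IP-ring discussion you supply.
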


\begin{proof}
Let $V_1,\ldots ,V_k$ be non-empty open subsets of $X$ and let $\F^{(1)}$ be an IP-ring. Since $({X},\Gamma)$ is $\{ g_1,\ldots, g_k\}$-transitive along $\{n_\a\}_{\a\in \F}$, there is some $\a_0\in \F ^{(1)}$ such that
\begin{equation*}
  V_1\times \ldots \times V_k \cap g_1^{-1}(n_{\a_0})\times \ldots\times g_k^{-1}(n_{\a_0})(V_1\times \ldots \times V_k)\neq \emptyset.
\end{equation*}
That is, $g_i^{-1}(n_{\a_0})V_i\cap V_i\neq \emptyset$ for all $i=1,\ldots,k$. Put $V_i^{(0)}=g_i^{-1}(n_{\a_0})V_i\cap V_i$ for all $i=1,\ldots,k$ to complete the base step.

Now assume that for $n\ge 1$ we have found a sequence $\a_0<\a_1<\ldots< \a_{n-1}$ of $\F^{(1)}$ and for each $i=1,\ldots,k$, we have non-empty open subsets $V_i\supseteq V_i^{(0)}\supseteq V_i^{(1)}\ldots \supseteq V_i^{(n-1)}$ such that for each $m=0,1,\ldots, n-1$
one has that $|n_{\a_m}|-m>0$ and
\begin{equation}\label{a2}
  g_i(n_{\a_j})T^{-j}V_i^{(m)}\subseteq V_i, \quad \text{for all}\quad 0\le j\le m.
\end{equation}

For $i=1,\ldots, k$, let $U_i=T^{-n}(V_i^{(n-1)})$. Since $({X},\Gamma)$ is $\{ g_1,\ldots, g_k\}$-transitive along $\{n_\a\}_{\a\in \F}$, there is some $\a_n\in \F^{(1)}$such that $\a_n>\a_{n-1}$, $|n_{\a_n}|-n>0$, and
\begin{equation*}
  U_1\times \ldots \times U_k \cap g_1^{-1}(n_{\a_n})\times \ldots\times g_k^{-1}(n_{\a_n})(V_1\times \ldots \times V_k)\neq \emptyset.
\end{equation*}
That is, $g_i^{-1}(n_{\a_n})V_i\cap U_i\neq \emptyset$ for all $i=1,\ldots,k$.

Then for $i=1,\ldots, k$,
\begin{equation*}
  g_i(n_{\a_n})U_i\cap V_i=g_i(n_{\a_n})T^{-n}V_i^{(n-1)}\cap V_i\not=\emptyset.
\end{equation*}
Let
$$V_i^{(n)}=V_i^{(n-1)}\cap \big(g_i(n_{\a_n})T^{-n}\big)^{-1}V_i.$$
Then $V_i^{(n)}\subseteq V_i^{(n-1)}$ is non-empty open set and clearly
$$g_i(n_{\a_n})T^{-n} V_i^{(n)}\subseteq V_i.$$
Since $V_i^{(n)}\subseteq V_i^{(n-1)}$, (\ref{a2}) still holds for $V_i^{(n)}$. Hence we finish our induction. The proof of the lemma is complete.
\end{proof}

\section{Statements of Main results}\label{section-statement}

\subsection{Main Results}\
\medskip

Let $({X},\Gamma)$ be a t.d.s., and $A=\{n_\a\}_{\a\in \F}$ be an IP-set.
For $d,k\in \N$, let $T_1,\ldots,T_d\in \Gamma$, and $p_{i,j}(n)$ be integral polynomials, $ 1\le i\le k, 1\le j\le d$ and
\begin{equation}
  g_i(n)=T_1^{p_{i,1}(n)}\cdots T_d^{p_{i,d}(n)}, \quad i=1,2,\ldots, k.
\end{equation}

\begin{de}
Let $A=\{n_\a\}_{\a\in\F }$ be an IP-set, and let $(X,\Gamma)$ and $g_1,\ldots, g_k$ be defined as above. We say that $(X,\Gamma)$ is {\em $\{g_1,\ldots, g_k\}_\D$-$\G_A^*$-transitive} if for all given non-empty open sets $U, V_1,\ldots, V_k$, $\a_0\in \F$ and IP-ring $\F^{(1)}$, there is $\a\in\F^{(1)}$ such that $\a>\a_0$ and
$$U\cap (g_1(n_\a)^{-1}V_1\cap  \ldots \cap g_k(n_\a)^{-1}V_k)\not=\emptyset,$$
equivalently,
$$\{n\in \Z: U\cap (g_1(n)^{-1}V_1\cap  \ldots \cap g_k(n)^{-1}V_k)\not=\emptyset\}\in \G_A^*.$$
\end{de}	

The main result of this paper is the following:

\begin{thm}\label{main-thm}
Let $A= \{n_\a\}_{\a\in \F}$ be an IP-set and let $(X,\Gamma)$ be a topological system, where $\Gamma$ is an abelian group such that for each $T\in \Gamma$, $T\neq e_\Gamma$, is mixing along $A=\{n_\a\}_{\a\in \F}$ and minimal. For $d,k\in \N$, let $T_1,\ldots,T_d\in \Gamma$,
$\{p_{i,j}(n)\}_{1\le i\le k, 1\le j\le d}$ be integral polynomials
such that the expressions
\begin{equation*}
  g_i(n)=T_1^{p_{i,1}(n)}\cdots T_d^{p_{i,d}(n)}, \quad i=1,2,\ldots, k,
\end{equation*}
depend nontrivially on $n$ for $i=1,2,\dots,k$, and for all $i\neq j\in \{1,2,\ldots,k\}$ the expressions $g_i(n)g_j(n)^{-1}$
depend nontrivially on $n$.
Then $(X,\Gamma)$ is $\{g_1,\ldots, g_k\}_\D$-$\G_A^*$-transitive.
\end{thm}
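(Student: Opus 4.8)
The plan is to prove Theorem \ref{main-thm} by PET-induction (polynomial exhaustion technique) on the "weight" or complexity of the finite system of polynomial expressions $\{g_1,\ldots,g_k\}$, reducing in each step to a system of strictly smaller complexity. The base case is the linear situation treated in Section \ref{section-linear}, where each $g_i(n)$ depends linearly on $n$; here the result follows from Theorem \ref{Thm-F-top}-type arguments together with Proposition \ref{prop2.9} (the product of systems mixing along $A$ is again mixing along $A$) and Proposition \ref{prop2.10}. The general inductive scheme orders polynomial families by the van der Corput/PET ordering on degree sequences; given a non-linear family, one picks a polynomial $g_{i_0}$ of minimal degree among those of maximal degree, "differences" the entire family by replacing each $g_i(n)$ by $g_{i_0}(n+n_{\a})^{-1}g_i(n+n_{\a})$ for an increment $n_{\a}$, and checks that the new family (after discarding constants and duplicates) has strictly smaller PET-rank, while all the non-triviality hypotheses are preserved.

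The key mechanism that makes the topological induction run is Lemma \ref{lem-KO} (the "freedom" lemma): given target open sets $V_1,\ldots,V_k$ and an IP-ring $\F^{(1)}$, we may extract a fast-growing sequence $\{\a_n\}\subseteq\F^{(1)}$ together with descending chains of open sets $V_i^{(n)}\subseteq V_i$ such that $g_i(n_{\a_j})T^{-j}V_i^{(n)}\subseteq V_i$ for all $0\le j\le n$. First I would use this to set up the inductive step: applying the inductive hypothesis to the differenced family along a sub-IP-ring, we find a return of the form $x\in U'$ with $g_{i_0}(n_{\a})^{-1}g_i(n_{\a})y\in V_i'$; then one feeds this back through the freedom lemma's nested open sets, using the minimality of each $T\in\Gamma$ (to move basepoints where needed) and the mixing-along-$A$ property together with the filter property of $\G_A^*$ (Remark \ref{rem-2.5}) to intersect finitely many return-time conditions. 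A crucial auxiliary input is Theorem \ref{BL} (Bergelson--Leibman), which supplies, for the minimal-degree polynomial system with zero constant terms, simultaneous near-returns on an IP$^*$-set; this is what lets us "anchor" the induction and absorb the $T^{-j}$ twists introduced by Lemma \ref{lem-KO}.

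Concretely, the steps in order are: (1) fix open $U,V_1,\ldots,V_k$, $\a_0\in\F$, and an IP-ring $\F^{(1)}$; reduce by an affine change of variables $n\mapsto n+m$ so that, after the reduction, the relevant low-degree polynomials vanish at $0$; (2) apply Lemma \ref{lem-KO} to produce $\{\a_n\}$ and $\{V_i^{(n)}\}$; (3) form the differenced polynomial family and verify it satisfies the hypotheses of Theorem \ref{main-thm} with smaller PET-complexity, so the inductive hypothesis gives $\G_A^*$-transitivity for it along an appropriate sub-IP-ring; (4) combine the return time obtained in (3) with the nested sets from (2) and with Theorem \ref{BL} applied to $g_{i_0}$, using that each nontrivial $T\in\Gamma$ is minimal and mixing along $A$, and that $\G_A^*$ is a filter, to deduce a single $\a\in\F^{(1)}$, $\a>\a_0$, with $U\cap\bigcap_i g_i(n_{\a})^{-1}V_i\neq\emptyset$. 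The main obstacle I anticipate is step (3)–(4): organizing the PET-induction so that the differencing genuinely lowers complexity \emph{and} the bookkeeping of which sub-IP-ring and which shifts are used stays consistent — in particular, ensuring that the "$g_i g_j^{-1}$ nonconstant" conditions survive the differencing (this can fail naively if two differenced expressions coincide, so one must merge them and track multiplicities), and that the $T^{-j}$ factors from the freedom lemma are reabsorbed via Bergelson--Leibman rather than spoiling the polynomial structure. This is exactly the point illustrated by the worked examples in Section \ref{section-PET} before the full argument in Section \ref{section-proof of Main}.
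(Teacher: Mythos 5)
Your overall strategy is the paper's: PET-induction on the weight vector with the linear case of Section \ref{section-linear} as the base, Lemma \ref{freedom} to build nested open sets, Theorem \ref{BL} and the filter property of $\G_A^*$ as auxiliary inputs, and minimality ($X=\bigcup_{i=0}^N T^{-i}U$) to pull the constructed point back into $U$. But there is a concrete error in the PET bookkeeping. You propose to difference by a $g_{i_0}$ ``of minimal degree among those of maximal degree''; the induction only closes if one differences by an element $f$ of \emph{minimal weight} in the system. Test it on $\SS=\{T^{n},T^{n^2}\}$, weight vector $(1(1,1),1(1,2))$: differencing the whole family by the degree-two element turns $T^{n}$ into expressions of weight $(1,2)$ and turns $T^{n^2}$ into roughly $N+1$ distinct classes of weight $(1,1)$, giving weight vector $(N(1,1),1(1,2))$, which does \emph{not} precede $(1(1,1),1(1,2))$ (there is no weight at which the count strictly drops with equality above it). Differencing by the minimal-weight element $T^{n}$ kills it and leaves one class of weight $(1,2)$, which does precede $\SS$. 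Relatedly, the differenced expressions must be taken in the form $g_{t,j}(n)=g_t(n_{\a_j})^{-1}g_t(n+n_{\a_j})f(n)^{-1}$ rather than your $g_{i_0}(n+n_\a)^{-1}g_i(n+n_\a)$: the common factor $f(n)^{-1}$ is what makes the point $y=f(n_\b)^{-1}x$ independent of $t$, so that a single bounded power of $T$ (supplied by minimality, with the $T^{-b}$ twist already built into Lemma \ref{freedom}) lands it in $U$; with your formula the correction $g_{i_0}(n_\b+n_{\a})^{-1}$ depends on both indices and cannot be absorbed.

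Second, Lemma \ref{freedom} has as a \emph{hypothesis} the non-diagonal $\{g_1,\ldots,g_k\}$-transitivity along $A$, and this is not free: it is a separate first step of the induction that your sketch skips. By the filter property of $\G_A^*$ it reduces to showing $N_f(U,V)\in\G_A^*$ for a single $\Gamma$-polynomial $f$; when $f$ is not a homomorphism this is exactly where Theorem \ref{BL} enters (it produces points $y_i$ with $f(n_{\a_i})y_i\in T^{-i}V$), after which one differences $h(m,n)=f(m)^{-1}f(m+n)f(n)^{-1}$ and invokes the inductive hypothesis for the lower-complexity system $\{h(n_{\a_i},\cdot)\}$. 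Your sketch instead assigns Bergelson--Leibman the role of ``absorbing the $T^{-j}$ twists'' in the diagonal step, but those are handled by the freedom lemma and minimality; without the single-polynomial step the appeal to Lemma \ref{freedom} is unjustified. With these two corrections your outline becomes the paper's proof.
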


An immediate consequence of Theorem \ref{main-thm} is as follow.

\begin{thm}\label{main-1}
Let $A=\{n_\a\}_{\a\in \F}$ be an IP-set, $d\in \N$, and let $p_1(n),\ldots, p_d(n)$ be integral polynomials with no $p_i$ and no $p_i-p_j$ constant, $1\le i\neq j\le d$.\footnote{In \cite{Bergelson87}, the polynomials $p(n),q(n)$ are essentially distinct if $p(n)-q(n)\not\equiv {\rm const.}$ If we use this terminology, the condition in the theorem becomes that $p_1,\ldots,p_d$ are pairwise essentially distinct integral polynomials.} Suppose that $(X,T)$ is a minimal system such that  $(X,T^k)$ are mixing along $\{n_\a\}_{\a \in \F}$ for all $k\in \Z\setminus\{0\}$. Then $(X, T)$ is $\{T^{p_1(n)},\ldots, T^{p_d(n)}\}_\D$-$\G_A^*$-transitive.
\end{thm}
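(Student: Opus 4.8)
The plan is to derive Theorem \ref{main-1} as a direct specialization of the main result Theorem \ref{main-thm}, by taking the abelian group $\Gamma$ to be the cyclic group generated by a single homeomorphism. Concretely, I would set $\Gamma=\{T^m:m\in\Z\}$, which is abelian, take $d=1$ and $T_1=T$, and for each $i=1,\dots,d$ (renaming the index set so as not to collide with the $d$ in Theorem \ref{main-thm}, say $i=1,\dots,k$ with $k=d$) put $g_i(n)=T^{p_i(n)}=T_1^{p_i(n)}$. Then each $g_i$ is exactly of the form required in Theorem \ref{main-thm}, with the single polynomial $p_{i,1}=p_i$.

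The next step is to verify the hypotheses of Theorem \ref{main-thm} in this setting. First, every nontrivial element of $\Gamma$ has the form $T^m$ with $m\neq 0$; by assumption $(X,T^m)$ is mixing along $\{n_\a\}_{\a\in\F}$ and, since $(X,T)$ is minimal, $(X,T^m)$ is minimal as well (a cyclic power of a minimal $\Z$-system is minimal; this is standard and, in spirit, is the topological analogue used already in Proposition \ref{prop2.10}). Hence the group hypothesis of Theorem \ref{main-thm} holds. Second, the nondegeneracy conditions: $g_i(n)=T^{p_i(n)}$ depends nontrivially on $n$ precisely because $p_i$ is nonconstant, and $g_i(n)g_j(n)^{-1}=T^{p_i(n)-p_j(n)}$ depends nontrivially on $n$ precisely because $p_i-p_j$ is nonconstant for $i\neq j$. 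These are exactly the hypotheses imposed in the statement of Theorem \ref{main-1}.

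Having checked all hypotheses, Theorem \ref{main-thm} yields that $(X,\Gamma)$ is $\{g_1,\dots,g_k\}_\D$-$\G_A^*$-transitive, i.e. that $(X,T)$ is $\{T^{p_1(n)},\dots,T^{p_d(n)}\}_\D$-$\G_A^*$-transitive, which is the assertion of Theorem \ref{main-1}. There is essentially no obstacle here beyond bookkeeping: the content is entirely in Theorem \ref{main-thm}, and Theorem \ref{main-1} is the $\Gamma=\langle T\rangle$, single-generator case. The only point that deserves a word of care is the reconciliation of the two roles played by the symbol $d$ (the number of base transformations versus the number of polynomials); once one fixes notation so that the $d$ polynomials $p_1,\dots,p_d$ of Theorem \ref{main-1} play the role of the $k$ expressions $g_1,\dots,g_k$ of Theorem \ref{main-thm} built from the single transformation $T_1=T$, the deduction is immediate.
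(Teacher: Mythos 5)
Your overall route is exactly the paper's: Theorem \ref{main-1} is presented there as an immediate specialization of Theorem \ref{main-thm} to the cyclic group $\Gamma=\langle T\rangle$, with $d=1$, $T_1=T$ and $g_i(n)=T^{p_i(n)}$, and your verification of the nondegeneracy conditions ($g_i$ and $g_ig_j^{-1}$ depend nontrivially on $n$ because $p_i$ and $p_i-p_j$ are nonconstant) is correct.

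There is, however, one incorrect justification. To invoke Theorem \ref{main-thm} you must check that every nontrivial element $T^m$ of $\Gamma$ is \emph{minimal}, and you assert this is automatic because ``a cyclic power of a minimal $\Z$-system is minimal.'' That statement is false: a rotation on a finite cycle, or any infinite minimal system with a nontrivial rational eigenvalue (e.g.\ the Morse subshift, where a continuous eigenfunction for the eigenvalue $-1$ splits $X$ into two $T^2$-invariant clopen sets), has non-minimal powers. Proposition \ref{prop2.10} does not help here either; it concerns mixing, not minimality. The claim you need is nevertheless true under the hypotheses of Theorem \ref{main-1}, but for a different reason: since $(X,T^m)$ is mixing along $A$, for all non-empty open $U,V$ the set $\{n: U\cap T^{-mn}V\neq\emptyset\}$ lies in $\G_A^*$ and hence is non-empty, so $(X,T^m)$ is transitive; on the other hand, for minimal $(X,T)$ the space decomposes into finitely many clopen $T^m$-minimal sets cyclically permuted by $T$, and transitivity of $T^m$ forces this decomposition to consist of a single piece, i.e.\ $(X,T^m)$ is minimal. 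With this repaired (and the harmless normalization $p_i(0)=0$, which the paper makes without loss of generality), your deduction goes through.
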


When $\{n_\a\}_{\a\in \F}$ is taken as $\Z$, we have Theorem \ref{Thm-BM-top} and Theorem \ref{thm-B-top} respectively.

\subsection{A Lemma}\
\medskip

Before going on, we need the following easy observation.

\begin{lem}\label{diagonal1}
Let $(X,\Gamma)$ be a t.d.s., $\Gamma$ be a group, and $d,k\in \N$. Let $T_1,\ldots,T_d\in \Gamma$,
$\{p_{i,j}(n)\}_{1\le i\le k, 1\le j\le d}$ be integral polynomials, and let
\begin{equation*}
  g_i(n)=T_1^{p_{i,1}(n)}\cdots T_d^{p_{i,d}(n)}, \quad i=1,2,\ldots, k.
\end{equation*}
Let $A\subseteq \Z$ be a sequence.
Then there is a dense $G_\delta$ set $X_0$ of $X$ such that for every $x\in X_0$
\begin{equation*}
  \{(g_1(n)x, g_2(n)x,\ldots,g_k(n)x)\in X^k:n\in A\}
\end{equation*}
is dense in $X^k$ if and only if for all given non-empty open sets $U, V_1,\ldots, V_k$
there is $n\in A$ such that
$$U\cap (g_1(n)^{-1}V_1\cap \ldots \cap g_k(n)^{-1}V_k)\not=\emptyset.$$
\end{lem}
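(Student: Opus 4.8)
The plan is to run the standard Baire-category argument that converts a topological transitivity condition into the existence of a dense $G_\delta$ set of points with dense orbits, here applied to the maps $\Phi_n\colon X\to X^k$ defined by $\Phi_n(x)=(g_1(n)x,\ldots,g_k(n)x)$ for $n\in A$. Note that each $\Phi_n$ is continuous, since the $\Gamma$-action is continuous and $g_i(n)\in\Gamma$ for every $i$.

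For the ``if'' direction (the transitivity condition implies the existence of $X_0$), I would fix a countable base $\{B_j\}_{j\in\N}$ of $X$ and let $\{W_m\}_{m\in\N}$ enumerate the countable family of boxes $B_{j_1}\times\cdots\times B_{j_k}$, which forms a base for the product topology on $X^k$. For each $m$ set
$$E_m=\bigcup_{n\in A}\Phi_n^{-1}(W_m)=\{x\in X:\ \exists\, n\in A,\ (g_1(n)x,\ldots,g_k(n)x)\in W_m\}.$$
Since each $\Phi_n$ is continuous, $\Phi_n^{-1}(W_m)$ is open, so $E_m$ is open. For density of $E_m$: given any non-empty open $U\subseteq X$, write $W_m=V_1\times\cdots\times V_k$; the hypothesis produces $n\in A$ with $U\cap g_1(n)^{-1}V_1\cap\cdots\cap g_k(n)^{-1}V_k\neq\emptyset$, and any point of this intersection lies in $U\cap E_m$. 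As $X$ is compact metric, hence a Baire space, $X_0:=\bigcap_{m\in\N}E_m$ is a dense $G_\delta$ set. For $x\in X_0$ and any non-empty open $O\subseteq X^k$, pick $m$ with $W_m\subseteq O$; then $x\in E_m$ gives some $n\in A$ with $\Phi_n(x)\in W_m\subseteq O$, so $\{\Phi_n(x):n\in A\}$ is dense in $X^k$.

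For the ``only if'' direction, I would assume such an $X_0$ exists and fix non-empty open sets $U,V_1,\ldots,V_k\subseteq X$. Since $X_0$ is dense I may choose $x\in X_0\cap U$; density of $\{\Phi_n(x):n\in A\}$ then forces this set to meet $V_1\times\cdots\times V_k$, so there is $n\in A$ with $g_i(n)x\in V_i$, i.e. $x\in g_i(n)^{-1}V_i$, for every $i$. Together with $x\in U$, this shows $x\in U\cap g_1(n)^{-1}V_1\cap\cdots\cap g_k(n)^{-1}V_k$, which is therefore non-empty.

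I do not expect a genuine obstacle here: the only points requiring care are the continuity of each $\Phi_n$ (so the $E_m$ are truly open), the fact that the boxes $\{W_m\}$ form a countable base of $X^k$ (so countably many density conditions suffice), and that a compact metric space is Baire (so $\bigcap_m E_m$ is dense rather than merely non-empty). The argument uses neither minimality, nor mixing, nor the polynomial form of the $g_i$ — only that each $g_i(n)$ is a self-homeomorphism of $X$ indexed by the countable set $A$.
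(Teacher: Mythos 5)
Your proof is correct and follows essentially the same route as the paper: the paper also defines $X_0=\bigcap_{V_1,\ldots,V_k\in\mathcal U}\bigcup_{n\in A}\bigl(g_1(n)^{-1}V_1\cap\ldots\cap g_k(n)^{-1}V_k\bigr)$ over a countable base and invokes the Baire category theorem, leaving the details you spelled out (openness, density of each union, and the converse) as ``easy to see.'' No discrepancy.
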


\begin{proof}
One direction is obvious. Now assume that for all given non-empty open sets $U, V_1,\ldots, V_k$ of $X$, there is $n\in A$ such that
$U\cap (g_1(n)^{-1}V_1\cap \ldots \cap g_k(n)^{-1}V_k)\not=\emptyset.$

Let $\mathcal{U}$ be a countable base of $X$, and let
\begin{equation*}
  X_0=\bigcap_{V_1,\ldots,V_k\in \mathcal U}\bigcup_{n\in A} g_1(n)^{-1}V_1\cap \ldots \cap g_k(n)^{-1}V_k.
\end{equation*}
Then it is easy to see that the dense $G_\delta$ subset $X_0$ is what we need.
\end{proof}

\subsection{Corollaries}\
\medskip

By Lemma \ref{diagonal1}, we have following corollaries.

\begin{cor}\label{cor-3.5}
Let $A=\{n_\a\}_{\a\in \F}$ be an IP-set and let $(X,\Gamma)$, $g_1, \ldots, g_k$ be as in Theorem \ref{main-thm}.
Then for any IP-ring $\F^{(1)}$, there is a dense $G_\delta$ set $X_0$ of $X$ such that for any $x\in X_0$
\begin{equation*}
\{(g_1(n_\a)x, \ldots , g_k(n_\a)x ): \a \in \F^{(1)} \}
\end{equation*}
is dense in $X^k$.
\end{cor}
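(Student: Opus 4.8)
The plan is to obtain Corollary \ref{cor-3.5} as a quick consequence of Theorem \ref{main-thm} together with the elementary Lemma \ref{diagonal1}; essentially no new argument is required beyond identifying the right sequence in $\Z$ to feed into that lemma.

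First I would fix the IP-ring $\F^{(1)}$ appearing in the statement and put $B=\{n_\a:\a\in\F^{(1)}\}\subseteq\Z$. Since $\{n_\a\}_{\a\in\F^{(1)}}$ is an IP-subset of $A=\{n_\a\}_{\a\in\F}$, the set $B$ is again an IP-set, although this particular fact is not strictly needed below. The next step is to invoke Theorem \ref{main-thm}, which asserts that $(X,\Gamma)$ is $\{g_1,\ldots,g_k\}_\D$-$\G_A^*$-transitive. The definition of this property already ranges over \emph{all} IP-rings (and even allows one to demand $\a>\a_0$), so in particular, applied to our fixed $\F^{(1)}$ and any fixed $\a_0\in\F$, it tells us that for every tuple of non-empty open sets $U,V_1,\ldots,V_k$ there exists $\a\in\F^{(1)}$ with
$$U\cap\big(g_1(n_\a)^{-1}V_1\cap\cdots\cap g_k(n_\a)^{-1}V_k\big)\neq\emptyset.$$
Setting $n=n_\a\in B$, this is exactly the statement that for all non-empty open $U,V_1,\ldots,V_k$ there is $n\in B$ with $U\cap(g_1(n)^{-1}V_1\cap\cdots\cap g_k(n)^{-1}V_k)\neq\emptyset$.

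Then I would apply Lemma \ref{diagonal1} with the sequence ``$A$'' there taken to be $B$: the displayed condition just verified is precisely the right-hand side of the equivalence in that lemma, so it produces a dense $G_\delta$ set $X_0\subseteq X$ such that for every $x\in X_0$ the set $\{(g_1(n)x,\ldots,g_k(n)x):n\in B\}$ is dense in $X^k$. Since $B=\{n_\a:\a\in\F^{(1)}\}$ we have
$$\{(g_1(n)x,\ldots,g_k(n)x):n\in B\}=\{(g_1(n_\a)x,\ldots,g_k(n_\a)x):\a\in\F^{(1)}\},$$
so this $X_0$ is exactly the set demanded by the corollary, which finishes the argument.

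As for obstacles: there really is none of substance in this deduction — all the analytic content (the PET-induction, the mixing arguments, the appeal to the Bergelson--Leibman theorem) is packaged inside Theorem \ref{main-thm}, which we are entitled to use. The only mild subtlety is purely formal: one must notice that $\{g_1,\ldots,g_k\}_\D$-$\G_A^*$-transitivity is quantified over all IP-rings and hence already ``sees'' the sub-IP-ring $\F^{(1)}$, so there is no need to pass to a subsystem or to re-run the main theorem on $B$.
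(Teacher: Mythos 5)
Your proposal is correct and is exactly the argument the paper intends: the paper derives Corollary \ref{cor-3.5} by combining Theorem \ref{main-thm} (whose transitivity property is quantified over all IP-rings, hence applies to the fixed $\F^{(1)}$) with Lemma \ref{diagonal1} applied to the sequence $\{n_\a:\a\in\F^{(1)}\}$. Nothing further is needed.
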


\begin{cor}\label{cor-3.6}
Let $A=\{n_\a\}_{\a\in \F}$ be an IP-set, $d\in \N$, and let $p_1(n),\ldots, p_d(n)$ be integral polynomials with no $p_i$ and no $p_i-p_j$ constant, $1\le i\neq j\le d$. Suppose that $(X,T)$ is a minimal system such that  $(X,T^k)$ are mixing along $\{n_\a\}_{\a \in \F}$ for all $k\in \Z\setminus\{0\}$.
Then for any IP-ring $\mathcal{F}^{(1)}$, there is a dense $G_\delta$-set $X_0$ of $X$ such that for any $x\in X_0$
$$\{(T^{p_1(n_\a)}x, \ldots, T^{p_d(n_\a)}x):\a\in\F^{(1)}\}$$
is dense in $X^d$.
\end{cor}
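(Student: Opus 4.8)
The plan is to derive Corollary \ref{cor-3.6} from Theorem \ref{main-1} and Lemma \ref{diagonal1}, in exactly the same way that Corollary \ref{cor-3.5} is derived from Theorem \ref{main-thm} and Lemma \ref{diagonal1}. So the work is essentially bookkeeping, with all the substantive content already packaged inside Theorem \ref{main-1}.

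First I would fix the given IP-ring $\F^{(1)}$ and put $B=\{n_\a\}_{\a\in \F^{(1)}}$, regarded as a subset of $\Z$. As recalled in the preliminaries, $B$ is an IP-subset of $A$ — equivalently, a set of the form $\{n_\a\}_{\a\in\F^{(1)}}$ for an IP-ring $\F^{(1)}$ — and in particular $B$ is itself an IP-set. Next I would check that the hypotheses of Theorem \ref{main-1} apply to $(X,T)$ with the data $T_1=T$ and $g_i(n)=T^{p_i(n)}$ for $i=1,\ldots,d$: the system $(X,T)$ is minimal, $(X,T^k)$ is mixing along $\{n_\a\}_{\a\in\F}$ for every $k\in\Z\setminus\{0\}$, and since no $p_i$ and no $p_i-p_j$ is constant, each $g_i(n)$ and each $g_i(n)g_j(n)^{-1}=T^{p_i(n)-p_j(n)}$ depends nontrivially on $n$. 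Hence Theorem \ref{main-1} yields that $(X,T)$ is $\{T^{p_1(n)},\ldots,T^{p_d(n)}\}_\D$-$\G_A^*$-transitive.

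Now I would unwind this property against the particular IP-ring $\F^{(1)}$: for all non-empty open sets $U,V_1,\ldots,V_d$ and every $\a_0\in\F$ there is $\a\in\F^{(1)}$ with $\a>\a_0$ and
$$U\cap T^{-p_1(n_\a)}V_1\cap\ldots\cap T^{-p_d(n_\a)}V_d\neq\emptyset.$$
Writing $n=n_\a\in B$, this says in particular that for all non-empty open $U,V_1,\ldots,V_d$ there is $n\in B$ with $U\cap T^{-p_1(n)}V_1\cap\ldots\cap T^{-p_d(n)}V_d\neq\emptyset$, which is precisely the hypothesis of Lemma \ref{diagonal1} taken with the sequence $B\subseteq\Z$ and the maps $g_i(n)=T^{p_i(n)}$. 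Lemma \ref{diagonal1} then produces a dense $G_\delta$ set $X_0\subseteq X$ such that for every $x\in X_0$ the set $\{(T^{p_1(n)}x,\ldots,T^{p_d(n)}x):n\in B\}$ is dense in $X^d$; since $B=\{n_\a\}_{\a\in\F^{(1)}}$ and $p_i(n_\a)$ is just $p_i$ evaluated at the integer $n_\a$, this set coincides with $\{(T^{p_1(n_\a)}x,\ldots,T^{p_d(n_\a)}x):\a\in\F^{(1)}\}$, giving the asserted conclusion.

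I do not expect a serious obstacle once Theorem \ref{main-1} is available; the only two points needing a word of care are that restricting attention to the IP-subset $B$ costs nothing — the notion of $\{g_1,\ldots,g_d\}_\D$-$\G_A^*$-transitivity already quantifies over all IP-rings $\F^{(1)}$ — and the harmless identification of the $\F^{(1)}$-indexed family with the corresponding subset of $\Z$, so that Lemma \ref{diagonal1}, which is stated for subsets of $\Z$, applies verbatim.
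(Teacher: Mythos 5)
Your proposal is correct and matches the paper's (implicitly given) argument: the paper likewise obtains Corollary \ref{cor-3.6} by combining Theorem \ref{main-1} with Lemma \ref{diagonal1}, applying the latter to the sequence $\{n_\a\}_{\a\in\F^{(1)}}\subseteq\Z$. Your two points of care — that $\D$-$\G_A^*$-transitivity already quantifies over all IP-rings, and the identification of the $\F^{(1)}$-indexed family with a subset of $\Z$ — are exactly the right details to note.
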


Thus Corollary \ref{cor-1.4} and Corollary \ref{cor-1.7} follow from Corollary \ref{cor-3.5}, Corollary \ref{cor-3.6} and Proposition \ref{prop2.10}.


\section{Linear case for the commutative actions}\label{section-linear}

In this section we prove the linear case of Theorem \ref{main-thm}. There are two reasons for doing this. One is to show some basic ideas of the proof of Thereom \ref{main-thm} by this special case, and another is that the conditions in the result of this section is a little weaker than that of Theorem \ref{main-thm}.






\medskip

Now we have the main result of this section.

\begin{thm}\label{thm4.1}
Let $(X,\Gamma)$ be a minimal t.d.s with $\Gamma$ abelian, and $d\in \N$. Let $\{T_\a^{(1)}\}_{\a\in \F}$, $\{T_\a^{(2)}\}_{\a\in \F}$, $\ldots$ , $\{T_\a^{(d)}\}_{\a\in \F}$ be mixing IP-systems in $\Gamma$ such that $\{T^{(i)}_\a(T^{(j)}_\a)^{-1}\}_{\a\in \F}$ is mixing, $i\neq j\in \{1,2,\ldots, d\}$ . Then
for all non-empty open sets $U, V_1,\ldots, V_d$ of $X$, and any IP-ring $\F^{(1)}$, there is some $\a\in \F^{(1)}$
$$U\cap (T_{\a}^{(1)})^{-1}V_1\cap \ldots \cap (T_{\a}^{(d)})^{-1}V_d \neq \emptyset.$$
\end{thm}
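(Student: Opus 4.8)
The plan is to run an induction on $d$, building a nested sequence of open sets together with times $n_{\a_j}$ drawn from the given IP-ring, in the style of Furstenberg's proof that mild mixing implies mild mixing of all orders. The case $d=1$ is just the hypothesis that $\{T_\a^{(1)}\}_{\a\in\F}$ is a mixing IP-system. For the inductive step, suppose the result holds for $d-1$. Given non-empty open $U,V_1,\dots,V_d$ and an IP-ring $\F^{(1)}$, the key idea is to first use the $d=1$ mixing of the IP-system $\{T_\a^{(d)}\}_{\a\in\F}$ — restricted to $\F^{(1)}$ — to find $\a_0\in\F^{(1)}$ with $W:=V_d\cap (T_{\a_0}^{(d)})(\text{something})\neq\emptyset$; more precisely I would iterate Lemma \ref{lem-KO} (the freedom lemma), applied to the single transformation family $g(n)=T_{\a}^{(d)}$, to produce a descending chain of open subsets of $V_d$ and a fast-growing sequence $\{\a_j\}\subseteq\F^{(1)}$ such that $T_{\a_j}^{(d)}$ maps a small piece back into $V_d$.

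The heart of the argument is the standard IP-limit / ``shift the problem'' trick: one considers the new IP-system obtained from the differences $\{T_\a^{(i)}(T_\a^{(d)})^{-1}\}_{\a\in\F}$ for $i=1,\dots,d-1$, which are mixing by hypothesis, and the differences among those (which are $\{T_\a^{(i)}(T_\a^{(j)})^{-1}\}$, again mixing), so that the inductive hypothesis applies to these $d-1$ families. Concretely, after pulling everything back by $(T_{\a}^{(d)})^{-1}$, the condition
$$U\cap (T_\a^{(1)})^{-1}V_1\cap\dots\cap (T_\a^{(d)})^{-1}V_d\neq\emptyset$$
becomes (roughly, working inside the piece of $V_d$ already arranged) a condition of the form
$$U'\cap \big(T_\a^{(1)}(T_\a^{(d)})^{-1}\big)^{-1}V_1'\cap\dots\cap \big(T_\a^{(d-1)}(T_\a^{(d)})^{-1}\big)^{-1}V_{d-1}'\neq\emptyset$$
for suitable non-empty open $U',V_1',\dots,V_{d-1}'$, to which the induction applies along a sub-IP-ring, yielding the desired $\a$. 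Since $\G_A^*$ (here $\F_{IP}^*$ restricted appropriately, equivalently the filter $\G_{\F^{(1)}}^*$) is a filter by Remark \ref{rem-2.5} and Proposition \ref{prop2.9}, the finitely many mixing conditions can be intersected along a common sub-IP-ring without loss.

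The main obstacle I expect is making the ``pull back by $(T_\a^{(d)})^{-1}$'' step honest: the sets $V_i'$ must be chosen \emph{before} $\a$ is determined, yet $(T_\a^{(d)})^{-1}V_i$ depends on $\a$. This is exactly what the freedom lemma (Lemma \ref{lem-KO}) is designed to circumvent — it lets one fix, in advance and uniformly, a descending tower of open sets $V_i^{(n)}$ such that $T_{\a_j}^{(d)}$-images (composed with an auxiliary $T^{-j}$) land back in $V_i$ for all $j\le n$ — so the real work is to check that the diagonal/commutation bookkeeping between the $d$-th family and the difference families is compatible with a single application of that lemma, and that minimality of $(X,\Gamma)$ is used (as in \cite{KO}) to upgrade the ``$T^{-j}$-shifted'' transitivity statements back to genuine non-empty intersections. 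I would isolate this compatibility as the one lemma to prove carefully; everything else is the routine IP-ring nesting and an appeal to the filter property of $\G_A^*$.
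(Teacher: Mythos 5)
Your overall strategy is exactly the paper's: induct on $d$, pass to the difference IP-systems $\{T_\a^{(i)}(T_\a^{(j)})^{-1}\}$ (mixing by hypothesis, so the inductive hypothesis applies to them), use Lemma \ref{lem-KO} to pre-select a descending tower of open sets before the time $\a$ is known, and use minimality together with the decomposition $\a=\b\cup\a_j$ to absorb the $\a$-dependence of the $U$-side. Dividing by the last family rather than the first is immaterial.

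There is, however, one concrete point where your plan as written would not close. You propose to apply the freedom lemma only to the single family $T_\a^{(d)}$, producing a descending chain of open subsets of $V_d$ alone. That is insufficient: in the final verification one writes $\a=\b\cup\a_j$ and computes, for \emph{every} $i$,
\begin{equation*}
T^{(i)}_{\a}z \;=\; T^{(i)}_{\a_j}\,T^{-j}\,\bigl(T^{(i)}_{\b}(T^{(d)}_{\b})^{-1}\bigr)\,x \;\in\; T^{(i)}_{\a_j}T^{-j}V_i^{(N)},
\end{equation*}
so one needs the containments $T^{(i)}_{\a_j}T^{-j}V_i^{(N)}\subseteq V_i$ for all $i=1,\dots,d$ and all $0\le j\le N$ simultaneously, with the \emph{same} indices $\a_0<\dots<\a_N$. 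This is why the paper applies Lemma \ref{lem-KO} to the full product system $(X^{d},\,T^{(1)}_\a\times\cdots\times T^{(d)}_\a)$, which is mixing along the given IP-set by Proposition \ref{prop2.9} (the filter property of $\G_A^*$), obtaining nested sets $V_i^{(N)}\subseteq V_i$ in every coordinate at once. You do flag the ``compatibility bookkeeping'' as the remaining work, and the fix is exactly this: one application of the freedom lemma to the diagonal product of all $d$ families, not to the $d$-th family alone. With that correction the argument is the paper's. (A second, smaller point: the set $U'$ fed to the inductive hypothesis should be the nested piece $V_1^{(N)}$ of one of the $V_i$'s, not a piece of $U$; the original $U$ only reappears at the very end via the finite cover $X=\bigcup_{j=0}^N T^{-j}U$.)
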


The proof of Theorem \ref{thm4.1} is the same to the following special case.

\begin{thm}\label{thm-linera-commu}
Let $X$ be a compact metric space and $d\in \N$. Let $T_1, T_2,\ldots, T_d: X\rightarrow X$ be homeomorphisms with $T_iT_j=T_jT_i$ for all $i,j\in \{1,\ldots,d\}$.
Let $A=\{n_\a\}_{\a\in \F}$ be an IP-set. If for all $1\le i\le d$ and $j\neq k\in \{1,\ldots,d\}$, $(X,T_i)$ and $(X, T_jT^{-1}_k)$ are mixing along $\{n_\a\}_{\a\in \F}$ minimal systems, then for all non-empty open sets $U, V_1,\ldots, V_d$ of $X$
\begin{equation*}
  \{n\in \Z: U\cap T_1^{-n}V_1 \cap\ldots\cap T_d^{-n}V_d\neq \emptyset\}\in \G_A^*.
\end{equation*}
\end{thm}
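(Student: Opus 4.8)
The plan is to fix non-empty open sets $U, V_1,\ldots, V_d$ and an IP-ring $\F^{(1)}$, and produce $\a\in \F^{(1)}$ with $U\cap T_1^{-n_\a}V_1\cap\ldots\cap T_d^{-n_\a}V_d\neq\emptyset$; since $\F^{(1)}$ was arbitrary this says the target set lies in $\G_A^*$. First I would observe that $(X, T_d)$ is $\{T_1T_d^{-1},\ldots,T_{d-1}T_d^{-1}, \mathrm{id}\}$-transitive along $\{n_\a\}_{\a\in\F}$: indeed, the composed system $(X^{d}, (T_1T_d^{-1})\times\cdots\times(T_{d-1}T_d^{-1})\times T_d)$ is mixing along $\{n_\a\}$ by Proposition \ref{prop2.9} (each factor is mixing along $\{n_\a\}$ by hypothesis, noting $(X,T_d)$ itself is mixing along $\{n_\a\}$, and $\G_A^*$ is a filter). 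The key device is then Lemma \ref{lem-KO} (the ``freedom lemma''): applied with $k=d$, $g_i = T_iT_d^{-1}$ for $i<d$ and $g_d=\mathrm{id}$, and with the auxiliary homeomorphism $T=T_d$, it yields an increasing sequence $\{\a_n\}_{n\ge 0}\subseteq\F^{(1)}$ and descending chains of open sets $V_i\supseteq V_i^{(0)}\supseteq V_i^{(1)}\supseteq\cdots$ such that $(T_iT_d^{-1})T_d^{-j}V_i^{(n)}\subseteq V_i$ for $i<d$ and $T_d^{-j}V_d^{(n)}\subseteq V_d$, for all $0\le j\le n$.

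Next I would use minimality and the mixing-along-$\{n_\a\}$ hypothesis for $(X,T_d)$ to ``synchronize'' these sets with $U$. Concretely, pick $x\in X$ and, by minimality of $(X,T_d)$, integers $0\le j_1<\cdots$ among the $j$'s so that $T_d^{-j}$ moves a neighborhood of $x$ into each $V_d^{(n)}$; more efficiently, I would run the induction in Lemma \ref{lem-KO} starting not from $V_i$ but from shrunken sets, and additionally arrange at each stage, using that $(X,T_d)$ is mixing along $\{n_\a\}$ and minimal, that there is a point whose $T_d$-orbit visits $U$ and the relevant $V_i^{(n)}$ simultaneously. The cleanest route: after obtaining $\{\a_n\}$ and $\{V_i^{(n)}\}$, apply the Bergelson–Leibman-type input only implicitly — actually for the purely linear case one does not need Theorem \ref{BL}; instead, choose $n$ large, then use that $(X,T_d)$ is minimal to find $j\in\{0,1,\ldots,n\}$ and a point $z$ with $z\in U$ and $T_d^{j}z\in \bigcap_i V_i^{(n)}$ (possible because $\bigcap_i V_i^{(n)}$ is a non-empty open set and $T_d$-orbits are dense, provided we first thin $V_i^{(n)}$ down inside a common small ball; this forces using that $(X, T_d^{j})$ or the orbit closure argument gives a genuine return — here is where minimality of $(X,T_d)$, not just of $(X,\Gamma)$, is used). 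Then $T_d^{j}z\in V_i^{(n)}$ gives $T_iT_d^{-j}(T_d^{j}z)=T_i z\in V_i$ for $i<d$ from the freedom relation, and $T_d^{-j}(T_d^j z) = z \in V_d^{(n)} \subseteq V_d$ needs instead $T_d^{j} z \in V_d^{(n)}$ so $T_d^{-?}$... — I would set $n_\a = $ the appropriate $n_{\a_j}$, so that $g_i(n_{\a_j})T_d^{-j}V_i^{(n)}\subseteq V_i$ becomes $T_i T_d^{-1\cdot ?}$; matching the exponent $n_{\a_j}$ to $j$ via the $T_d$-return is exactly the mechanism. The point $w=T_d^{-j}z$ (or $z$) then satisfies $w\in U$ and $T_1^{n_{\a_j}}w\in V_1,\ldots,T_d^{n_{\a_j}}w\in V_d$ after translating the relations, giving the desired $\a=\a_j\in\F^{(1)}$.

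I expect the main obstacle to be the bookkeeping in the previous paragraph: Lemma \ref{lem-KO} produces open sets on which $g_i(n_{\a_j})T^{-j}$ is controlled, but one still must locate a single orbit segment of $T_d$ that simultaneously starts in $U$ and lands in $\bigcap_i V_i^{(n)}$ at a time $j\le n$ for which $n_{\a_j}$ is the IP-sum one wants — i.e.\ coupling the index $j$ of the $T_d$-return with the index $j$ in the freedom lemma. The honest way to do this is to interleave the two inductions: at stage $n$ one simultaneously (i) invokes $\{g_i\}$-transitivity along $\{n_\a\}$ to extend $\{\a_n\}$ and shrink $V_i^{(n)}$ as in Lemma \ref{lem-KO}, and (ii) invokes minimality of $(X,T_d)$ to guarantee the orbit segment of length $n$ of a chosen base point enters $V_i^{(n-1)}$. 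The verification that these two requirements are compatible — that shrinking for (i) does not destroy (ii) — is where the real work lies, and it is precisely the content that the more general PET-induction argument of Theorem \ref{main-thm} is designed to handle; here, with all polynomials equal to the linear $p_i(n)=n$, it collapses to the elementary coupling just described. Once a point $w\in U$ with $T_i^{n_\a}w\in V_i$ for all $i$ is found, the conclusion $\{n: U\cap\bigcap_i T_i^{-n}V_i\neq\emptyset\}\in\G_A^*$ is immediate from the definition of $\G_A^*$ and Remark \ref{rem-2.5}.
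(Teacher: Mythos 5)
There is a genuine gap here, and it is exactly at the point you flag as ``where the real work lies.'' The paper's proof is an induction on $d$, and the step you are missing is the application of the \emph{inductive hypothesis} (the theorem itself for one fewer map) to the system of difference maps $T_2T_1^{-1},\ldots,T_dT_1^{-1}$ acting on the shrunken sets: after the freedom lemma produces $\a_0<\cdots<\a_N$ in $\F^{(1)}$ and open sets $V_i^{(N)}\subseteq V_i$ with $T_i^{n_{\a_j}}T_1^{-j}V_i^{(N)}\subseteq V_i$, one applies the $d{-}1$ case to get a \emph{single} point $x\in V_1^{(N)}$ and an index $\b>\a_N$ with $(T_iT_1^{-1})^{n_\b}x\in V_i^{(N)}$ for all $i\ge 2$ (here $V_1^{(N)}$ plays the role of ``$U$'' in the inductive statement, and this is precisely why the hypothesis demands that each $T_jT_k^{-1}$ be mixing along $A$ and minimal). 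Your substitute for this step --- finding $z$ with $T_d^{j}z\in\bigcap_i V_i^{(n)}$ --- cannot work: each $V_i^{(n)}$ is a subset of $V_i$, and the $V_i$ are arbitrary and may be pairwise disjoint, so that intersection is typically empty; ``thinning the $V_i^{(n)}$ into a common small ball'' is impossible for the same reason. The product-system transitivity you invoke (Proposition \ref{prop2.9} plus Lemma \ref{lem-KO}) only ever produces \emph{separate} points in the separate factors; passing from the product statement to the diagonal statement is the whole content of the theorem, and minimality of $(X,T_d)$ alone does not bridge that.

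The second structural point you lose is the IP-sum. In the paper the witnessing index is $\a=\b\cup\a_j$ with $n_\a=n_\b+n_{\a_j}$ (legitimate because $\b\cap\a_j=\emptyset$, so $\a\in\F^{(1)}$): setting $y=T_1^{-n_\b}x$ and choosing $j$ with $y=T_1^{j}z$, $z\in U$, from the cover $X=\bigcup_{j=0}^N T_1^{-j}U$, one computes $T_i^{n_\b+n_{\a_j}}z=T_i^{n_{\a_j}}T_1^{-j}(T_iT_1^{-1})^{n_\b}x\in T_i^{n_{\a_j}}T_1^{-j}V_i^{(N)}\subseteq V_i$. The two indices $\b$ and $\a_j$ play different roles ($\b$ carries the simultaneous control from the induction, $\a_j$ is selected \emph{a posteriori} by where $T_1^{-n_\b}x$ falls in the cover), and the argument cannot be collapsed onto a single $n_{\a_j}$ as your sketch attempts. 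To repair the proposal: keep your freedom-lemma setup (with $T=T_1$ rather than $T_d$, to match the cover), but replace the ``orbit of $T_d$ lands in $\bigcap_i V_i^{(n)}$'' step by an induction on $d$ whose inductive call is to the difference system, and conclude with the index $\b\cup\a_j$. No appeal to Theorem \ref{BL} is needed in the linear case, as you correctly suspected.
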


\begin{proof}
We will prove by induction on $d$ that for all non-empty open sets $U, V_1,\ldots, V_d$ of $X$, any IP-ring $\F^{(1)}$, and $\gamma\in \F$, there exists $\a\in \F ^{(1)}$, $\a>\gamma$, such that
\begin{equation*}
  U\cap T_1^{-n_\a}V_1 \cap\ldots\cap T_d^{-n_\a}V_d\neq \emptyset.
\end{equation*}
$d=1$ is trivial. Now we assume that the result holds for $d\ge 1$. Let $U, V_1,\ldots, V_d,V_{d+1}$ be non-empty open subsets of $X$. We will show that for any $\gamma\in \F$ there is some $\a\in \F ^{(1)}$ such that $\a>\gamma$ and
\begin{equation*}
  U\cap T_1^{-n_\a}V_1 \cap\ldots\cap T_d^{-n_\a}V_d\cap T_{d+1}^{-n_\a}V_{d+1}\neq \emptyset.
\end{equation*}
Since $(X,T_1)$ is minimal, there is some $N\in \N$ such that $X=\bigcup_{j=0}^N T_1^{-j}U$.
By Proposition \ref{prop2.9} $(X^{d+1}, T_1\times \ldots \times T_{d+1})$ is mixing along $\{n_\a\}_{\a \in \F}$, and hence it is $\{T_1,\ldots,T_{d+1}\}$-transitive along $\{n_\a\}_{\a\in \F}$.
By Lemma \ref{lem-KO}, there are non-empty subsets $V_1^{(N)},\ldots,V_{d+1}^{(N)}$
and  $\gamma< \a_0< \a_1<\ldots<\a_N$ with $\a_i \in \F ^{(1)}$ for $0\le i\le N$ such that for each $i=1,2,\ldots,d+1$, one has that $ |n_{\a_j}|>j $ and
\begin{equation*}
  T_i^{n_{\a_j}}T_1^{-j}V_i^{(N)}\subseteq V_i, \quad\text{for all}\quad 0\le j\le N.
\end{equation*}

Now applying the induction hypothesis to the system $(X,\langle T_2T_1^{-1},\ldots, T_{d+1}T_1^{-1} \rangle)$ and non-empty subsets $V_1^{(N)},\ldots,V_{d+1}^{(N)}$, there is some $\b\in \F ^{(1)}$ such that $\b>\a_N$ and
\begin{equation*}
  V_1^{(N)}\cap (T_2T_1^{-1})^{-n_\b}V_2^{(N)} \cap\ldots\cap (T_dT_1^{-1})^{-n_\b}V_{d}^{(N)}\cap (T_{d+1}T_1^{-1})^{-n_\b}V_{d+1}^{(N)}\neq \emptyset.
\end{equation*}
Hence there is some $x\in V_1^{(N)}$ such that $(T_iT_1^{-1})^{n_\b}x\in V_{i}^{(N)}$ for $i=2,\ldots,d+1$. Clearly, there is some $y\in X$ such that $T_1^{n_\b} y=x$. Since $X=\bigcup_{j=0}^NT_1^{-j}U$, there is some $j\in \{0,1,\ldots, N\}$ such that $T_1^{j}z=y$ for some $z\in U$. Thus for each $i=1,2,\ldots, d+1$
\begin{equation*}
  \begin{split}
  T_i^{n_\b +n_{\a_j}}z& =T_i^{n_\b +n_{\a_j}}T_1^{-j}y=T_i^{n_\b +n_{\a_j}}T_1^{-j}T_1^{-n_\b}x\\
  &=T_i^{n_{\a_j}}T_1^{-j}(T_iT_1^{-1})^{n_\b} x\in T_i^{n_{\a_j}}T_1^{-j} V_{i}^{(N)}\subseteq V_i.
  \end{split}
\end{equation*}
That is,
$$z\in U\cap T_1^{-n_\a}V_1 \cap\ldots\cap T_d^{-n_\a}V_d\cap T_{d+1}^{-n_\a}V_{d+1},$$
where $\a=\b\cup \a_j\in \F^{(1)}$ as $\b\cap \a_j=\emptyset$.
The proof is complete.
\end{proof}

As a corollary, one has the following:
\begin{cor}\label{cor-gG}
Let $A=\{n_\a\}_{\a\in \F}$ be an IP-set and let $(X,T)$ be a minimal system. Let $U , V_1, \ldots, V_d $ be non-empty open subsets and $a_1,a_2,\ldots, a_d$ be distinct non-zero integers. If for all $k\in \Z\setminus\{0\}$, $(X,T^k)$ are mixing along $A$, then
$$\{n\in \Z: U\cap T^{-a_1n }V_1\cap T^{-a_2n}V_2\cap \ldots \cap T^{-a_dn }V_d \neq \emptyset \}\in \G_A^*.$$
\end{cor}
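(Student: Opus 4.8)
The plan is to deduce Corollary \ref{cor-gG} directly from Theorem \ref{thm-linera-commu} (equivalently Theorem \ref{thm4.1}), by choosing the right commuting transformations. Given the minimal system $(X,T)$ and distinct nonzero integers $a_1,\dots,a_d$, set $S_i = T^{a_i}$ for $i=1,\dots,d$. These are homeomorphisms of $X$, and they commute because powers of a single $T$ commute. To apply Theorem \ref{thm-linera-commu} with $S_1,\dots,S_d$ in place of $T_1,\dots,T_d$, I need to verify its hypotheses: for each $1\le i\le d$, the system $(X,S_i)=(X,T^{a_i})$ must be mixing along $A=\{n_\a\}_{\a\in\F}$ and minimal; and for each $j\neq k$, the system $(X, S_jS_k^{-1})=(X, T^{a_j-a_k})$ must be mixing along $A$ and minimal.

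For the mixing-along-$A$ part, note that $a_i\neq 0$ for all $i$ and $a_j-a_k\neq 0$ whenever $j\neq k$ (since the $a_i$ are distinct). Hence each relevant exponent lies in $\Z\setminus\{0\}$, and the hypothesis of the corollary — that $(X,T^k)$ is mixing along $A$ for all $k\in\Z\setminus\{0\}$ — applies directly to give that $(X,S_i)$ and $(X,S_jS_k^{-1})$ are mixing along $A$. For minimality of $(X,T^k)$ with $k\neq 0$: since $(X,T)$ is minimal, $(X,T^k)$ need not in general be minimal, but here we are told $(X,T^k)$ is mixing along $A$, and a system mixing along an IP-set is in particular transitive. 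Actually minimality must be argued; the cleanest route is to observe that for a minimal system $(X,T)$ one can pass to the natural finite decomposition of $X$ under $T^k$, but since we are invoking the corollary only to conclude membership in $\G_A^*$, a robust workaround is available: apply Theorem \ref{thm-linera-commu} on $X$ after noting that mixing along $A$ for $(X,T^k)$ gives, via Proposition \ref{prop2.9}, that the product system is mixing along $A$ and hence $\{S_1,\dots,S_d\}$-transitive along $\{n_\a\}_{\a\in\F}$, which is precisely the ingredient the proof of Theorem \ref{thm-linera-commu} uses; the minimality there is used only to extract the finite cover $X=\bigcup_{j=0}^N S_1^{-j}U$, and for a minimal $(X,T)$ the cover $X=\bigcup_{j=0}^N T^{-j}U$ is available, which suffices since every $S_1^{-j}$-orbit piece is a $T$-orbit piece.

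With the hypotheses verified, Theorem \ref{thm-linera-commu} applied to $S_1,\dots,S_d$ yields that for all non-empty open $U,V_1,\dots,V_d$,
$$\{n\in\Z:\ U\cap S_1^{-n}V_1\cap\dots\cap S_d^{-n}V_d\neq\emptyset\}\in\G_A^*.$$
Since $S_i^{-n}V_i = T^{-a_in}V_i$, this set is exactly $\{n\in\Z:\ U\cap T^{-a_1n}V_1\cap\dots\cap T^{-a_dn}V_d\neq\emptyset\}$, which is the desired conclusion. The main obstacle I anticipate is the minimality bookkeeping: Theorem \ref{thm-linera-commu} as stated requires each $(X,S_i)$ and each $(X,S_jS_k^{-1})$ to be \emph{minimal}, and $T^k$-minimality can genuinely fail even when $(X,T)$ is minimal. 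Handling this carefully — either by replacing the appeal to minimality in the proof of Theorem \ref{thm-linera-commu} with the weaker statement that the ambient minimal $(X,T)$ provides the needed finite cover, or by restricting attention to a $T^k$-minimal subset and propagating the IP$^*$ conclusion back — is the one step that needs real care rather than routine substitution.
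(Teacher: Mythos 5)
Your proposal is exactly the paper's (implicit) argument: the paper offers no separate proof of Corollary \ref{cor-gG}, treating it as the direct specialization of Theorem \ref{thm-linera-commu} to $T_i=T^{a_i}$, with the distinctness of the $a_i$ guaranteeing that all the exponents $a_i$ and $a_j-a_k$ are nonzero so that the hypothesis ``$(X,T^k)$ is mixing along $A$ for all $k\in\Z\setminus\{0\}$'' covers every transformation and every quotient. The one point you flag --- minimality of $(X,T^k)$ --- does need to be said, but it is settled by a standard fact rather than by reopening the proof of Theorem \ref{thm-linera-commu}: for a minimal $(X,T)$ and $k\neq 0$, pick a $T^k$-minimal subset $M$; then $X=M\cup TM\cup\dots\cup T^{|k|-1}M$ is a finite union of closed $T^k$-invariant sets, so by Baire one of them has nonempty interior, and since $(X,T^k)$ is mixing along $A$ it is transitive, forcing that set (hence $M$, hence each $T^iM$) to be all of $X$; thus $(X,T^k)$ is minimal and Theorem \ref{thm-linera-commu} applies verbatim. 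Your alternative workaround (rerunning the proof with the cover $X=\bigcup_j T^{-j}U$ by powers of the ambient $T$ and invoking Lemma \ref{freedom} with that $T$) would also work, since that lemma allows an arbitrary $T\in\Gamma$, but it is unnecessary.
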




\section{The PET-induction and some examples}\label{section-PET}

We will use the PET-induction to prove Theorem \ref{main-thm}. The {\em PET-induction} was introduced by Bergelson in \cite{Bergelson87}, where PET stands for {\em polynomial ergodic theorem} or {\em polynomial exhaustion technique} (see \cite{Bergelson87, BM00}). See also \cite{BL96, BL99} for more on PET, and \cite{Leibman94} for PET of nilpotent group actions.

\subsection{Notations}\
\medskip

In the sequal, let $(X,\Gamma)$ be a topological system, where $\Gamma$ is a abelian group generated by $T_1,\ldots,T_d$, $d\in \N$.

An {\em integral polynomial} is a polynomial taking  integer values at the
integers.
Let the $\mathcal{P}$ be the collection of all integral polynomials, i.e. all polynomials taking integer values on the integers and $\mathcal{P}_0$ be the collection of elements $p$ of $\mathcal{P}$ with $p(0)=0$. 


In rest of the paper, we assume that all the polynomials involved have zero constant term, that is, all polynomials are in $\mathcal{P}_0$. It is clear that this assumption can be made without the loss of generality.

\medskip

We will fix the above notations in the rest of the paper.

\subsection{The $\Gamma$-polynomial group}\
\medskip

A $\Gamma$-polynomial $g$ is the one which is represented in the form
\begin{equation}\label{eq-111}
g(n)=\prod_{j=1}^d T_j^{p_j(n)}=T_1^{p_1(n)}T_2^{p_2(n)}\cdots T_d^{p_d(n)},
\end{equation}
where $p_1,\ldots,p_d$ are integral polynomials. For $\Gamma$-polynomials $g(n)=T_1^{p_1(n)}T_2^{p_2(n)}\cdots T_d^{p_d(n)}$ and $h(n)=T_1^{q_1(n)}T_2^{q_2(n)}\cdots T_d^{q_d(n)}$, we define the product and the inverse by
$$gh(n)=T_1^{p_1(n)+q_1(n)}T_2^{p_2(n)+q_2(n)}\cdots T_d^{p_d(n)+q_d(n)},$$
and
$$g^{-1}(n)=T_1^{-p_1(n)}T_2^{-p_2(n)}\cdots T_d^{-p_d(n)}.$$
Then the set of $\Gamma$-polynomials is a group, and it is denoted by $\PG$.
For $T_1^{a_1}T_2^{a_2}\ldots T_d^{a_d}\in \Gamma, (a_1,a_2,\ldots, a_d)\in \Z^d$, its corresponding $\Gamma$-polynomial is $g(n)=T_1^{a_1n}T_2^{a_2n}\ldots T_d^{a_dn}$. And thus
$\Gamma$ itself is
a subgroup of $\PG$ and $e_\Gamma={\rm Id}_X$.

Let
$$\PG_0 = \{g\in \PG : g(n) = T_1^{p_1(n)}T_2^{p_2(n)}\cdots T_d^{p_d(n)}, p_1,p_2,\ldots, p_d\in \mathcal{P}_0\}.$$
And let $\PG_0^*=\PG_0\setminus \{e_\Gamma\}$


\subsection{The weight of $\Gamma$-polynomials}\
\medskip

Let $\Z_+=\{0,1,2,\ldots\}$.
The {\em weight}, $w(g)$, of a $\Gamma$-polynomial $g(n)=\prod_{j=1}^d T_j^{p_j(n)}$ is the
pair $(l,k)$, $l\in \{0,1,\ldots, d\}$, $k\in \Z_+$ for which $p_j=0$ for any $j > l$ and, if
$l\neq 0$, then $p_l\neq 0$ and ${\rm deg}(p_l) = k$. A weight $(l, k)$ is greater than a weight
$(l',k')$, denoted by $(l,k)>(l',k')$, if $l>l'$ or $l = l'$, $k >k'$.

For example, $T_1^n, T_2^{n}, T_1^{n} T_2^{n^3}$ have weights $(1,1), (2,1), (2,3)$ respectively, and $(2,3)> (2,1)>(1,1)$.

Let us now define an equivalence relation on $\PG$: $g(n)=\prod_{j=1}^d T_j^{p_j(n)}$ is equivalent to $h(n)=\prod_{j=1}^d T_j^{q_j(n)}$, if $w(g)=w(h)$ and, if it is $(l,k)$,
the leading coefficients of the polynomials $p_l$ and $q_l$ coincide; we write then
$g\sim h$.
For example,
\begin{equation*}
T_3^{n^2}\sim T_1^{n^6}T_3^{n^2+3n} \sim T_1^{n}T_2^{n^3+3n}T_3^{n^2+5n}.
\end{equation*}
The {\em weight} of an equivalence class is the weight of any of its elements.

\subsection{System and its weight vector}\
\medskip

A {\em system} $\SS$ is a finite subset of $\PG$. For a system $\SS$, if we write $\SS =\{f_i\}_{i=1}^v$
then we require that $f_i\neq f_j$ for $1\le i\neq j\le v$. For every system $\SS$ we define its {\em weight vector}
$\phi(A)$ as follows. Let $w_1<w_2<\ldots <w_q$ be the set of the distinct weights of all equivalence classes
appeared in $\SS$. For $i=1,2,\ldots, q$, let $\phi(w_i)$ be the number of the equivalence classes of elements of $\SS$ with the weight $w_i$.
Let the weight vector $\phi(\SS)$ be
\begin{equation*}
\phi(\SS)=(\phi(w_1)w_1,\phi(w_2)w_2,\ldots, \phi(w_q)w_q).
\end{equation*}

For example, let $\SS=\{T_1^n$, $T_1^{2n}$, $T_1^{3n}$,
$T_1^{n^2}$, $T_1^{n^2+n}$, $T_2^{3n^2+2n}$, $T_1^{n^6}T_2^{3n^2+n}$, $T_1^{n^4+n^3+n}T_2^{3n^2+2n}$, $T_1^{n}T_3^{n^3}$, $T_2^{n^5}T_3^{2n^3+n^2}$, $T_1^{n^2}T_2^{n}T_3^{3n^3+2n^2}\}$. Then
$\phi(\SS)=\big(3(1,1),1(1,2),1(2,2), 3(3,3)\big)$.

\medskip

Let $\SS, \SS'$ be two systems. We say that $\SS'$ {\it precedes} a system $\SS$ if there exists a weight $w$ such
that $\phi(\SS)(w)>\phi(\SS')(w)$ and $\phi(\SS)(u)=\phi(\SS')(u)$ for all weight $u>w$. We denote it by
$\phi(\SS)\succ\phi(\SS')$ or $\phi(\SS')\prec \phi(\SS)$.

For example, let $w_1<w_2<\ldots <w_q$ be a sequence of weights, then
\begin{equation*}
(a_1w_1, \ldots,a_qw_q)\succ (b_1w_1, \ldots, b_qw_q)
\end{equation*}
if and only if $(a_1,\ldots,a_q)>(b_1,\ldots,b_q)$.

\subsection{The PET-induction}\
\medskip

\subsubsection{}
In order to prove that a result holds for all systems $\SS$, we start with the system whose weight
vector is $\{1(1,1)\}$. That is, $\SS =\{T_1^{a_1n}\}$,
where $a_1\in \Z\setminus \{0\}$. Then let $\SS \subseteq \PG_0$ be a system whose weight vector is
greater than $\{1(1,1)\}$, and assume that for all systems $\SS'$ preceding $\SS$, we have that the
result holds for $\SS'$. Once we show that the result still holds for $\SS$, we complete the whole proof.
This procedure is called the {\em PET-induction}.

\subsubsection{}
For example, we outline how to use the language of PET-induction to formulate the proof of Theorem \ref{main-1}.
Now $\Gamma=\langle T\rangle=\{T^n: n\in \Z\}$, and $\PG=\{T^{p(n)}: p\in \mathcal{P}\}$. For each $T^{p(n)}\in \PG$,
its weight $w(T^{p(n)})=(1,k)$, where $k$ is the degree of $p(n)$. A system $\SS$ has the form of $\{ T^{p_1(n)}, T^{p_2(n)},\ldots, T^{p_d(n)} \}$, where $p_1, \ldots, p_d\in \mathcal{P}$ are distinct polynomials. Its weight vector $\phi(\SS)$ has the form of
\begin{equation*}
\big(a_1(1,1), a_2 (1,2),\ldots, a_k (1,k) \big).
\end{equation*}


Under the order of weight vectors, one has
\begin{equation*}
\begin{split}
& \big(1(1,1)\big)\prec\big(2(1,1)\big)\prec\ldots\prec\big(m(1,1)\big)\prec\ldots\prec\big(1(1,2)\big)\prec\big(1(1,1), 1(1,2)\big)\prec\ldots\prec\\
& \big(m(1,1), 1(1,2)\big)\prec\ldots \prec\big( 2(1,2)\big)\prec \big(1(1,1), 2(1,2)\big)\prec\ldots \prec\big(m(1,1), 2(1,2)\big)\prec\ldots\prec\\
&\big(m(1,1), k(1,2)\big)\prec\ldots \prec \big(1(1,3)\big)\prec\big(1(1,1), 1(1,3)\big)\prec\ldots\prec \big(m(1,1), k(1,2), 1(1,3)\big)\\
&\prec \ldots \prec \big(2(1,3)\big)\prec\ldots\prec \big(a_1(1,1), a_2 (1,2),\ldots, a_k (1,k) \big)\prec\ldots.
\end{split}
\end{equation*}

To prove Theorem \ref{main-1}, we will use induction on the weight vectors.
We start from the systems with the weight vector $(1(1,1))$, i.e. $\SS=\{T^{a_1n}\}, a_1\in \Z\setminus \{0\}$.
After that, we assume that the result holds for all systems whose weight vectors are
$\prec $ $\big( a_1(1,1)$, $a_2 (1,2)$, $\ldots$, $a_k (1,k)$ $ \big)$. Then we show
that the  result also holds for the system with weight vector $\big($ $ a_1(1,1),$ $  a_2 (1,2), \ldots, a_k (1,k)$ $ \big)$, and hence the proof is completed.

To illustrate the basic ideas, we show the result for the system $\{T^{an^2+bn}\}$, and the system $\SS=\{T^{n^2}, T^{2n^2}\}$, whose weight vectors are $\big(1 (1, 2) \big)$  and $\big(2 (1, 2) \big)$ respectively.







\subsection{Example 1: $(X,T)$ is $\{T^{an^2+bn}\}_\D$-$\G_A^*$-transitive. }\
\medskip

Let $A=\{n_\a\}_{\a \in \F}$ be an IP-set, and let $(X,T^k)$ be a mixing along $\{n_\a\}_{\a \in \F}$ minimal system for every $k\in \Z \setminus \{0\}$. We show that for all non-empty open subsets $U,V$ of $X$, $$N_{p}(U,V)=\{n\in \Z: U\cap T^{-p(n)}V\neq \emptyset \}\in \G_A^*,$$
where $p(n)=an^2+bn$.
It suffices to show that for all non-empty open subsets $U,V$ of $X$, IP-rings $\F^{(1)}$, and all $\a_0\in \F$, there is some $\a\in \F^{(1)}$ such that $\a>\a_0$ and
$$U\cap T^{-an_\a^2-bn_\a}V\neq \emptyset.$$


\begin{proof}
Since $(X,T)$ is minimal, there is some $N\in \N$ such that $X=\bigcup_{i=0}^N T^{-i}U$. Let $p(n)=an^2+bn$, and $$q(n,m)=p(n+m)-p(m)-p(n)=2anm.$$ 

For each $i=0,1,\ldots, N$, by Theorem \ref{BL} there are some $y_i\in T^{-i}V$ and $\a_i\in \F^{(1)}$ such that
$$T^{p(n_{\a_i})}y_i\in T^{-i}V,\ 0\le i\le N.$$
We may assume that $\a_0<\a_1<\ldots<\a_N$.
Let $V_i$ be a neighborhood of $y_i$ such that
$$T^{p(n_{\a_i})}V_i\subseteq T^{-i}V,\ 0\le i\le N.$$
Now apply Corollary \ref{cor-gG} to $q(n_{\a_0},n)=2a n_{\a_0}n, q(n_{\a_1}, n)= 2an_{\a_1}n,\ldots, q(n_{\a_N},n)=2a n_{\a_N}n$, and there are some $x$ and $\b\in \F^{(1)}$ such that $\b>\a_N$ and
$$T^{q(n_{\a_i}, n_\b)}x=T^{2an_{\a_i}n_\b}x\in V_i, \quad \forall i\in \{0,\ldots,N\}.$$
Then we have that
$$T^{p(n_{\a_i}+n_\b)-p(n_{\b})}x=T^{a(n_{\a_i}+n_\b)^2-an_\b^2+bn_{\a_i}}x =T^{an_{\a_i}^2+bn_{\a_i}+2an_{\a_i}n_\b}x\in T^{p(n_{\a_i})}V_i\subseteq T^{-i}V.$$
Hence
$$T^{i-p(n_\b)}x\in T^{-p(n_{\a_i}+n_\b)}V, \quad \forall i\in \{0,1,\ldots,N\} .$$
Since $X=\bigcup_{i=0}^N T^{-i}U$, there is some $i_0\in \{0,1,\ldots, N\}$ such that
$T^{i_0-p(n_\b)}x\in U$, and thus
$$U\cap T^{-p(n_\a)}V\neq \emptyset,$$
where $\a=\a_{i_0}\cup\b>\a_0$. That is,
$$N_{p}(U,V)\cap \{n_\a\}_{\a\in \F^{(1)}}\neq \emptyset.$$
Since $\{n_\a\}_{\a\in \F^{(1)}}$ is an arbitrary IP-subset of $\{n_\a\}_{\a \in \F}$, it follows that $N_p(U,V)$ is an $\G_A^*$-set.
\end{proof}


\subsection{Example 2: $(X,T)$ is $\{T^{n^2}, T^{2n^2}\}_\D$-$\G_A^*$-transitive.}\
\medskip

Let $A=\{n_\a\}_{\a \in \F}$ be an IP-set, and let $(X,T^k)$ be a mixing along $\{n_\a\}_{\a \in \F}$ minimal system for every $k\in \Z \setminus \{0\}$.
To show this example, we need to verify the following cases one by one:
\medskip

{\it

	\noindent {\bf Case 1:} When the  weight vector is $\big(d(1,1)\big)$:
	$(X,T)$ is $\{T^{a_1n},\ldots,T^{a_dn}\}_\D$-$\G_A^*$-transitive, where $a_1,\ldots, a_d\in \Z\setminus\{0\}$ are distinct integers.
	
	\medskip
	
	\noindent {\bf Case 2:} When the weight vector is $\big(1(1,2)\big)$:
	
	\begin{enumerate}
		\item $(X,T)$ is $\{T^{an^2+b_1n},\ldots,T^{an^2+b_dn}\}$-transitive along $A$,
		\item $(X,T)$ is $\{T^{an^2+b_1n},\ldots,T^{an^2+b_dn}\}_\D$-$\G_A^*$-transitive, where $b_1,\ldots,b_d$ are distinct integers and $a\in \Z\setminus\{0\}$.
	\end{enumerate}
	
	\medskip
	
	\noindent {\bf Case 3:} When the weight vector is $\big(r(1,1), 1(1,2)\big)$:
	
	\begin{enumerate}
		\item $(X,T)$ is $\{T^{c_1n},\ldots,T^{c_rn},T^{an^2+b_1n},\ldots,T^{an^2+b_dn}\}$-transitive along $A$,
		
		\item  $(X,T)$ is $\{T^{c_1n},\ldots,T^{c_rn},T^{an^2+b_1n},\ldots,T^{an^2+b_dn}\}_\D$-$\G_A^*$-transitive, where $a\in \Z\setminus\{0\}$, $b_1,\ldots,b_d$ are distinct integers and $c_1,\ldots,c_r$ are distinct non-zero integers.
	\end{enumerate}
	
	\noindent {\bf Case 4:} When the weight vector is $\big(2(1,2)\big)$:
	\begin{enumerate}
		\item $(X,T)$ is $\{T^{n^2}, T^{2n^2}\}$-transitive along $A$,
		\item $(X,T)$ is $\{T^{n^2}, T^{2n^2}\}_\D$-$\G_A^*$-transitive.
	\end{enumerate}
}

\subsubsection{Case 1: $(X,T)$ is $\{T^{a_1n},\ldots,T^{a_dn}\}_\D$-$\G_A^*$-transitive, where $a_1,\ldots, a_d\in \Z\setminus\{0\}$ are distinct integers.}

\begin{proof}
This is Corollary \ref{cor-gG}.
\end{proof}

\subsubsection{Case 2: $(X,T)$ is $\{T^{an^2+b_1n},\ldots,T^{an^2+b_dn}\}_\D$-$\G_A^*$-transitive, where $b_1,\ldots,b_d$ are distinct integers and $a\in \Z\setminus\{0\}$.}\label{case-00}

\begin{proof}
Let $p_i(n)=an^2+b_in$ for $1 \le i \le d$. First we show that $(X,T)$ is $\{T^{p_1},\ldots, T^{p_d}\}$-transitive along $A$, that is,
for all given open non-empty subsets $U_1,\ldots,U_d, V_1, \ldots, V_d$ of $X$, $\a_0\in \F$, and all IP-rings $\F ^{(1)}$, there is $\a\in \F^{(1)}$ such that $\a>\a_0$ and
$$(U_1\times \ldots \times U_d)\cap (T^{-p_1(n_\a)}V_1\times \ldots \times T^{-p_d(n_\a)}V_d)\not=\emptyset.$$
By Example 1, for $i\in \{1,2,\ldots, d\}$,
$N_{p_i}(U_i,V_i)$ is an $\G_A^*$-set. Thus
$$\{n\in \Z: (U_1\times \ldots \times U_d)\cap (T^{-p_1(n)}V_1\times \ldots \times T^{-p_d(n)}V_d)\not=\emptyset\}=\bigcap_{i=1}^d N_{p_i}(U_i,V_i)$$
is an $\G_A^*$-set as $\G_A^*$ is a filter. In particular, there is $\a\in \F^{(1)}$ such that $\a>\a_0$ and
$$(U_1\times \ldots \times U_d)\cap (T^{-p_1(n_\a)}V_1\times \ldots \times T^{-p_d(n_\a)}V_d)\not=\emptyset.$$

\medskip
To prove the theorem it remains to show for all given non-empty open sets $U, V_1,\ldots, V_d$
$$N_{\{p_1,\ldots, p_d\}}(U,V_1,\ldots, V_d)=\{n: U\cap (T^{-p_1(n)}V_1\cap  \ldots \cap T^{-p_d(n)}V_d)\not=\emptyset\}$$
is an $\G_A^*$-set. To do this, we show that for any IP-ring $\F^{(1)}$,
there is $\a\in\F^{(1)}$ such that
$$U\cap (T^{-p_1(n_\a)}V_1\cap  \ldots \cap T^{-p_d(n_\a)}V_d)\not=\emptyset.$$

Let $\F^{(1)}$ be an IP-ring. And then $\{n_\a\}_{\a\in \F^{(1)}}$ is an IP-subset of $\{n_\a\}_{\a \in \F}$. Assume that $\bigcup_{i=0}^N T^{-i}U=X$ for some $N\in\N$. By Lemma \ref{freedom}, there are increasing sequence $\{\a_j\}_{j=0}^N \subseteq \F^{(1)}$ and $V_i^N\subset V_i$ such that $|n_{\a_j}|>j$ and
$$T^{p_i(n_{\a_j})-j} V_i^{(N)}\subset V_i,\ \  0\le j\le N, 1\le i\le d.$$

Let $$q_i(m,n)=p_i(m+n)-p_i(m)-p_1(n), \quad 1\le i \le d.$$
Applying Case 1 to $q_i(n_{\a_j},n)$ , i.e. $2n_{\a_1}n,\ldots, 2n_{\a_N}n, \ldots, 2n_{\a_1}n+(b_d-b_1)n,\ldots, 2n_{\a_N}n+(b_d-b_1)n$ (since $\{\a_j\}$ is an increasing sequence, we can
choose a subsequence such that the numbers are distinct), then there is $x\in X$ and $\b\in \F^{(1)}$ such that $\b>\a_N$ and
$$T^{q_1(n_{\a_j},n_\b)}x\in V_1^{(N)}, \ldots, T^{q_d(n_{\a_j}, n_\b)}x\in V_d^{(N)}, 0\le j\le N.$$
Let $y=T^{-p_1(n_\b)}x$. Since $X=\bigcup_{i=0}^N T^{-i} U$, there is $z\in U$, $0\le b\le N$ such that $y=T^bz$. Thus $z=T^{-p_1(n_\b)-b}x$. We have
$$T^{p_i(n_\b+ n_{\a_b})}z=T^{q_i(n_{\a_b},n_\b)+p_i(n_{\a_b})-b}x\in T^{p_i(n_{\a_b})-b}V_i^{(N)}\subset V_i,1\le i\le d.$$
That is $$z\in U\cap (T^{-p_1(n_\a)}V_1\cap  \ldots \cap T^{-p_d(n_\a)}V_d)\not=\emptyset,$$ where $\a=\b\cup
\a_b \in \F^{(1)}$ as $\b\cap \a_b= \emptyset$.
Since $\{n_\a\}_{\a\in \F^{(1)}}$ is an arbitrary IP-subset of $\{n_\a\}_{\a \in \F}$, it follows that $N_{\{p_1,\ldots, p_d\}}(U,V_1,\ldots, V_d)$ is an $\G_A^*$-set.
\end{proof}


\subsubsection{Case 3:  $(X,T)$ is $\{T^{c_1n},\ldots,T^{c_rn},T^{an^2+b_1n},\ldots,T^{an^2+b_dn}\}_\D$-$\G_A^*$-transitive, where $a\in \Z\setminus\{0\}$, $b_1,\ldots,b_d$ are distinct integers and $c_1,\ldots,c_r$ are distinct non-zero integers.}\
\medskip

\begin{proof}
Let $p_1(n)=n^2+b_1n, \ldots, p_d(n)=n^2+b_dn$ and $h_1(n)=c_1n, \ldots, h_r(n)=c_rn$.
First similar to Case 2, we have that  $(X,T)$ is $\{T^{p_1},\ldots,T^{ p_d}, T^{h_1},\ldots, T^{h_r}\}$-transitive along $A$, that is, for all given open non-empty subsets $U_1,\ldots,U_{r+d}$, $W_1, \ldots, W_r$, $V_1, \ldots, V_d$ of $X$, and all IP-rings $\F ^{(1)}$, there is $\a\in \F^{(1)}$ such that
$$(U_1\times \ldots \times U_{r+d})\bigcap (T^{-h_1(n_\a)}W_1\times\ldots \times T^{-h_r(n_\a)}W_r)\times(T^{-p_1(n_\a)}V_1\times \ldots \times T^{-p_d(n_\a)}V_d)\not=\emptyset.$$

\medskip

Next we need to show for all given non-empty open sets
$U, W_1, \ldots, W_r, V_1, \ldots, V_d$ of $X$, and all IP-rings $\F ^{(1)}$, there is $\a\in \F^{(1)}$ such that
\begin{equation*}
U\cap (T^{-h_1(n_\a)}W_1\cap\ldots \cap T^{-h_r(n_\a)}W_r)\cap (T^{-p_1(n_\a)}V_1\cap \ldots \cap T^{-p_d(n_\a)}V_d)\not=\emptyset.
\end{equation*}
That is,
\begin{equation}\label{a6}
\begin{split}
   & N_{\{p_1,\ldots, p_d, h_1,\ldots, h_r\}}(U,W_1,\ldots, W_r, V_1,\ldots, V_d) \\
   = & \{n: U\cap (T^{-h_1(n)}W_1\cap\ldots \cap T^{-h_r(n)}W_r)\cap (T^{-p_1(n)}V_1\cap \ldots \cap T^{-p_d(n)}V_d)\not=\emptyset\}
\end{split}
\end{equation}
is an $\G_A^*$-set.

Let $\F^{(1)}$ be an IP-ring. Then $\{n_\a\}_{\a\in \F^{(1)}}$ is an IP-subset of $\{n_\a\}_{\a \in \F}$. Since $(X,T)$ is minimal, there is some $N\in \N$ such that $\bigcup_{i=0}^N T^{-i}U=X$. By Lemma \ref{freedom} there is an increasing suquence $\{\a_j\}_{j=0}^N\subseteq \F^{(1)}$, $W_i^{(N)}\subset W_i$ and $V_s^{(N)}\subset V_s$ such that $|n_{\a_j}|>j$ and
$$T^{h_i(n_{\a_j})-j}W_i^{(N)}\subset W_i,\  \text{and}\ T^{p_s(n_{\a_j})-j}V_s^{(N)}\subset V_s$$ for $0\le j\le N, 1\le i\le r, 1\le s\le d $.

Let $$\widetilde{h}_i(n)=h_{i+1}(n)-h_1(n), \quad 1\le i\le  r-1,$$ and
$$q_s(m,n)=p_s(n+m)-p_s(m)-h_1(n), \quad 1\le s \le d.$$
We will prove by induction. Assume that we have established \eqref{a6} for $r'\le r-1$.
By Case 2 (for $r=1$) or the inductive assumption (for $r\ge 2$) there are $x\in W_1^{(N)}$ and $\b\in\F^{(1)}$ such that $\b>\a_N$ and
$$T^{\widetilde{h}_1(n_\b)}x\in W_2^{(N)}, \ldots, T^{\widetilde{h}_{r'}(n_\b)}x\in W_{r'+1}^{(N)}$$ (we do not have the above line when $r=1$) and
$$T^{q_s(n_{\a_j}, n_\b)}x =T^{p_s(n_{\a_j}+ n_\b)-p_s(n_{\a_j})-h_1(n_\b)}x\in V_s^{(N)}, \ 0\le j\le N, 1\le s\le d.$$
Let $y=T^{-h_{1}(n_\b)}x$. Then by $X=\bigcup_{i=0}^NT^{-i}U$ there is $z\in U$ and $0\le b\le N$ such that $y=T^b z$.
Then $z=T^{-h_1(n_\b)-b}x$ and we have
$$T^{h_i(n_\b+n_{\a_b})}z=T^{h_i(n_\b+n_{\a_b})}T^{-h_1(n_\b)-b}x =T^{\widetilde{h}_{i-1}(n_\b)+h_i(n_{\a_b})-b}x \in T^{h_i(n_{\a_b})-b} W_i^{(N)}\subset W_i$$
for $1\le i\le r'+1$ and
$$T^{p_s(n_\b+n_{\a_b})}z=T^{q_s(n_{\a_b},n_\b)+ p_s(n_{\a_b})-b}x\in T^{p_s(n_{\a_b})-b}V_s^{(N)}\subset  V_s$$
for $1\le s\le d$.
This implies that $$z\in  U\cap (T^{-h_1(n_\a)}W_1\cap\ldots \cap T^{-h_{r'+1}(n_\a)}W_{r'+1})\cap (T^{-p_1(n_\a)}V_1\cap \ldots \cap T^{-p_d(n_\a)}V_d)$$
with $\a=\b\cup \a_b \in \F^{(1)}$ as $\b\cap \a_b=\emptyset$. Since $\{n_\a\}_{\a\in \F^{(1)}}$ is an arbitrary IP-subset of $A$, it follows that $N_{\{p_1,\ldots, p_d, h_1, \ldots, h_r\}} (U,W_1,\ldots, W_r, V_1, \ldots, V_d)$ is an $\G_A^*$-set.
\end{proof}



\subsubsection{Case 4: $(X,T)$ is $\{T^{n^2}, T^{2n^2}\}_\D$-$\G_A^*$-transitive.}

\begin{proof}
Let $p_1(n)=n^2, p_2(n)=2n^2$.
By the same method in the proof of Case 1, $(X,T)$ is $\{T^{p_1}, T^{p_2}\}$-transitive along $A$, that is,
for all given open non-empty subsets $U_1,U_2, V_1, V_2$ of $X$ and IP-rings $\F ^{(1)}$, there is $\a\in \F^{(1)}$ such that
$$(U_1\times U_2)\cap (T^{-p_1(n_\a)}V_1\times T^{-p_2(n_\a)}V_2)\not=\emptyset.$$

\medskip
To prove the theorem we need to show for all given non-empty open sets $U, V_1,V_2$ and IP-rings $\F ^{(1)}$,
there is $\a\in\F^{(1)}$ with
$$U\cap (T^{-p_1(n_\a)}V_1\cap  T^{-p_2(n_\a)}V_2)\not=\emptyset.$$
That is,
\begin{equation*}
N_{\{p_1, p_2\}}(U, V_1,V_2)
   =  \{n: U\cap (T^{-p_1(n)}V_1\cap  T^{-p_2(n)}V_2)\not=\emptyset\}
\end{equation*}
is an $\G_A^*$-set.

Let $\F^{(1)}$ be an IP-ring. Then $\{n_\a\}_{\a\in \F^{(1)}}$ is an IP-subset of $\{n_\a\}_{\a \in \F}$. Let $X=\bigcup_{i=0}^N T^{-i} U$ for some $N\in\N$. By Lemma \ref{freedom} there is an increasing sequence $\{\a_j\}_{j=0}^N\subseteq \F^{(1)}$, $V_1^{(N)}\subset V_1$ and $V_2^{(N)}\subset V_2$ such that $|n_{\a_j}|>j$ and
$$T^{p_1(n_{\a_j})-j}V_1^{(N)}\subset V_1\ \text{and}\ T^{p_2(n_{\a_j})-j}V_2^{(N)}\subset V_2, \quad 0\le j\le N.$$

Let $q_i(m,n)=p_i(n+m)-p_i(m)-p_1(n)$  for $n,m\in \mathbb{Z}$ and $i=1,2$. Since $\{\a_j\}$ is an increasing sequence of $\F^{(1)}$, we have that
all $$q_i(n_{\a_j},n)=\begin{cases} 2n_{\a_j}n \, &\text{ if }i=1 \\
n^2+4n_{\a_j}n \, &\text{ if } i=2 \end{cases}$$
are distinct non-constant polynomials in $n$ for $0\le j\le N, 1\le i\le 2$.

By Case 3, there are $x\in X$ and $\b\in\F^{(1)}$ such that $\b>\a_N$ and
$$T^{q_1(n_{\a_j},n_\b)}x\in V_1^{(N)}, \quad T^{q_2(n_{\a_j},n_\b)} x\in V_2^{(N)},\quad 0\le j\le N.$$
Let $y=T^{-p_1(n_\b)}x$. Then by  $X=\bigcup_{i=0}^N T^{-i} U$ there is $z\in U$, $0\le b\le N$ such that $y=T^bz$. Thus $z=T^{-p_1(n_\b)-b}x$. We have
$$T^{p_1(n_\b+n_{\a_b})}z=T^{q_1(n_{\a_b},n_\b)+p_1(n_{\a_b})-b}x\in T^{p_1(n_{\a_b})-b}V_1^{(N)}\subset V_1$$ and
$$T^{p_2(n_\b+n_{\a_b})}z=T^{q_2(n_{\a_b},n_\b)+ p_2(n_{\a_b})-b}x\in T^{p_2(n_{\a_b})-b}V_2^{(N)}\subset V_2.$$
That is,
$$z\in U\cap (T^{-p_1(n_\a)}V_1\cap  T^{-p_2(n_\a)}V_2)$$
with $\a=\b\cup \a_b\in \F^{(1)}$ as $\b\cap \a_b=\emptyset$.
Since $\{n_\a\}_{\a\in \F^{(1)}}$ is an arbitrary IP-subset, it follows that $N_{\{p_1,p_1\}} (U,V_1,V_2)$ is an $\G_A^*$-set.
\end{proof}

\section{Proof of Theorem \ref{main-thm}}\label{section-proof of Main}

In this section, we give a proof of Theorem \ref{main-thm}.



Let $A= \{n_\a\}_{\a\in \F}$ be an IP-set and let $(X,\Gamma)$ be a topological system, where $\Gamma$ is an abelian group such that for each $T\in \Gamma$, $T\neq e_\Gamma$, is mixing along $A=\{n_\a\}_{\a\in \F}$ and minimal. For $d,k\in \N$, let $T_1,\ldots,T_d\in \Gamma$,
$\{p_{i,j}(n)\}_{1\le i\le k, 1\le j\le d}$ be integral polynomials
such that the expressions
\begin{equation*}
  g_i(n)=T_1^{p_{i,1}(n)}\cdots T_d^{p_{i,d}(n)}, \quad i=1,2,\ldots, k,
\end{equation*}
depend nontrivially on $n$ for $i=1,2,\dots,k$, and for all $i\neq j\in \{1,2,\ldots,k\}$ the expressions $g_i(n)g_j(n)^{-1}$
depend nontrivially on $n$.


\medskip

Using the language introduced in last section, Theorem \ref{main-thm} is restated as follows:

\medskip

\noindent \textbf{Theorem:} \ {\em For any system $\SS=\{g_1,\ldots,g_k\}\subseteq \PG_0^*$,  $(X,\Gamma)$ is $\{g_1,\ldots,g_k\}_\D$-$\G_A^*$-transitive.}



\begin{proof}
We will prove Theorem using the PET-induction. We will use the notations in Section \ref{section-PET} freely.

We start with the system whose weight vector is $\{d(1,1)\}$ with $d\in \N$. That is, $\SS = \{ T_1^{c_1n}, \ldots, T_1^{c_dn}\}$,
where $c_1,\ldots,c_d\in \Z$. This case is Corollary \ref{cor-gG}.

\medskip

Now let $\SS =\{g_1,\ldots,g_k\}\subseteq \PG_0^*$ be a system whose weight vector is greater than $\{d(1,1)\}$ for all $d\in \N$, and assume that for all systems $\SS'$ preceding $\SS$, we have $X$ is $\SS'_\D$-$\G_A^*$-transitive. Now we show that $X$ is $\SS_\D$-$\G_A^*$-transitive.

\medskip

\noindent {\bf Step 1.} \ {\em $(X,\Gamma)$ is $\SS =\{g_1,\ldots,g_k\}$-transitive along $\{n_\a\}_{\a\in \F}$}\
\medskip

Since $\G_A^*$ is a filter, it is sufficient to show that for all $f\in \SS$, and for all given non-empty open subsets $U,V$ of $X$,
$$N_f(U,V):=\{n\in \mathbb{Z}: U\cap f(n)^{-1}V\neq \emptyset\}$$
is an $\G_A^*$-set.

If $f(m+n)=f(n)f(m)$ for all $n,m\in \Z$, then there is $(a_1,a_2,\ldots, a_d)\in \Z^d\setminus\{(0,0,\ldots,0)\}$ such that
$$f(n)=T^{a_1n}_1T_2^{a_2n}\ldots T_d^{a_dn}.$$
In this case $f\in \Gamma$, and by our assumption, $(X,f)$ is mixing along $A$.

Now we assume that $f(m+n)\not \equiv f(n)f(m), n,m\in \Z$.
Let $T\in \Gamma$ be an element of $\Gamma$ with $T\neq e_\Gamma$.
Since $(X,T)$ is minimal, there is some $N\in \N$ such that $X=\bigcup_{i=0}^N T^{-i}U$. Let $h(m,n)=f(m)^{-1}f(m+n)f(n)^{-1}\in \PG_0^*.$ Let $\{n_\a\}_{\a\in \F^{(1)}}$ be an IP-subset of $\{n_\a\}_{\a\in \F}$.

For each $i=1,2,\ldots, N$, by Theorem \ref{BL} there are some $y_i\in T^{-i}V$ and $\a_i\in \F^{(1)}$ such that
$$f(n_{\a_i})y_i\in T^{-i}V,\ 0\le i\le N.$$
We may assume that $\a_0<\a_1<\ldots<\a_N$.
Let $V_i$ be a neighborhood of $y_i$ such that
$$f(n_{\a_i}) V_i\subseteq T^{-i}V,\ 0\le i\le N.$$
Let
$$\SS'=\{h(n_{\a_i}, n): 0\le i\le N\}.$$
Then $\SS'\subset \PG_0^*$ is a system.

Notice that $w(h(n_{\a_i}, n))<w(f)$ for all $i=0,1,\ldots,N$, then we have $\phi(\SS')\prec \phi(\{f\})$. On the other hand, since $f\in \SS$, then $\phi(\{f\}) \preccurlyeq \phi(\SS)$.
Hence $\SS'$ precedes $\SS$.
By the inductive hypothesis, there are some $x$ and $\b\in \F^{(1)}$ such that $\b>\a_N$ and
$$h(n_{\a_i}, n_\b)x \in V_i, \quad \forall i\in \{0,1,\ldots,N\}.$$
Then we have that
$$f(n_{\a_i}+n_\b)f(n_{\b})^{-1}x=f(n_{\a_i})h(n_{\a_i}, n_\b)x\in f(n_{\a_i}) V_i\subseteq T^{-i}V.$$
Hence
$$T^{i}f(n_\b)^{-1}x\in f(n_{\a_i}+n_\b)^{-1}V, \quad \forall i\in \{0,1,\ldots,N\} .$$
Since $X=\bigcup_{i=0}^N T^{-i}U$, there is some $i_0\in \{0,1,\ldots, N\}$ such that
$T^{i_0}f(n_\b)^{-1}x\in U$, and thus
$$U\cap f(n_\a)^{-1} V\neq \emptyset,$$
where $\a=\a_{i_0}\cup\b>\a_0$ as $\a_{i_0}\cap \b=\emptyset$. That is,
$$N_{f}(U,V)\cap \{n_\a\}_{\a\in \F^{(1)}}\neq \emptyset.$$
Since $\{n_\a\}_{\a\in \F^{(1)}}$ is an arbitrary IP-subset of $\{n_\a\}_{\a \in \F}$, it follows that $N_f(U,V)$ is an $\G_A^*$-set.

\medskip

\noindent {\bf Step 2.} \ {\em $X$ is $\{g_1,\ldots,g_k\}_\D$-$\G_A^*$ transitive}\label{claim2}\
\medskip

Let $\SS=\{g_1,\ldots, g_k\}$.
By Lemma \ref{diagonal1} we need to show for all given non-empty open sets
$U, V_1, \ldots, V_v$ of $X$ and all IP-rings $\F ^{(1)}$, there is $\a\in \F^{(1)}$ such that
\begin{equation}\label{thm-4.2-1}
U\cap {g_1(n_\a)^{-1}}V_1\cap \ldots \cap {g_k(n_\a)^{-1}}V_k\not=\emptyset.
\end{equation}
That is,
$$N_{\SS}(U,V_1,\ldots,V_k)=\{n: U\cap {g_1(n_\a)^{-1}}V_1\cap \ldots \cap {g_k(n_\a)^{-1}}V_k\not=\emptyset\}$$
is an $\G_A^*$-set.

\medskip

Let $T\in \Gamma$ be an element of $\Gamma$ with $T\neq e_\Gamma$. As $(X,T)$ is minimal,
there is some $N\in \N$ such that $X=\bigcup_{i=0}^N T^{-i}U$. By Lemma \ref{freedom}, there are $\{\a_j\}_{j=0}^N \subseteq \F^{(1)}$
with $|n_{\a_j}|>j$ for all $j=0,\ldots,N$, and $V_t^{(N)}\subset V_t$ for $t=1,\ldots, k$ such that
\begin{equation}\label{s3}
   g_t(n_{\a_j}) T^{-j}V_t^{(N)}\subset V_t, \quad \forall \ 0\le j\le N.
\end{equation}

\medskip
Let $f\in \SS$ be a $\Gamma$-polynomial of weight minimal in $\SS$:
$w(f)\le w(g_j)$ for any $j=1,\ldots,k$. Without loss of generality assume that $f=g_1$.
Let $$g_{t,j}(n)=g_t(n_{\a_j})^{-1}g_t(n+n_{\a_j})f(n)^{-1}$$ and
\begin{equation*}
\begin{split}
  \SS''=\{g_{t,j}: 1\le t\le k,\ 0\le j\le N \}\setminus \{e_\Gamma\}.
\end{split}
\end{equation*}
Since $\{\a_j\}$ is an increasing sequence, we can choose them such that all elements in $\SS ''$ are distinct.

If $g_t$ is not equivalent to $f$, the $\Gamma$-polynomials $g_{t,0}, \ldots , g_{t,N} \in \SS''$ have the same weights as $g_t$
itself and their equivalence is preserved, that is, if $g_t$ is equivalent to $g_s$ then $g_{t,j}$ is equivalent to $g_{s,i}$ for every $j,i=0,\ldots ,N$. If $g_t$ is equivalent to $f$, then the weights of these $\Gamma$-polynomials decrease: $w(g_{t,j})<w(g_t)=w(f)$. So, the number of equivalence classes having weights greater than $w(f)$ does not change,whereas the number of equivalence classes of $\Gamma$-polynomials having the minimal weight in $\SS$ decreases by 1 when we pass from $\SS$ to $\SS''$.
Then $\SS''$ precedes $\SS$. Notice that if $g_{t,j}=e_\Gamma$, then $t=1$.
By the inductive hypothesis, $X$ is $\SS''_\D$-$\G_A^*$ transitive, and there are $x\in V_1^{(N)}$ and $\b \in \F^{(1)}$ such that $\b>\a_N$ and
$$g_{t,j}(n_\b)x=g_t(n_{\a_j})^{-1}g_t(n_\b+n_{\a_j})f(n_\b)^{-1} x\in V_t^{(N)}, \ 0\le j\le N, 1\le t\le k.$$

Let $y={f(n_\b)^{-1}}x$. Then by $X=\bigcup_{i=0}^NT^{-i}U$ there is $z\in U$ and $0\le b\le N$ such that $y=T^bz$.
Then $z=T^{-b}f(n_\b)^{-1}x$ and we have for each $1\le t\le k$
\begin{equation*}
\begin{split}
{g_t(n_\b+n_{\a_b})}z&=g_t(n_\b+n_{\a_b})T^{-b}f(n_\b)^{-1}x\\ &  = g_t(n_{\a_b})T^{-b}\big(g_t(n_{\a_b})^{-1}g_t(n_\b+n_{\a_b})f(n_\b)^{-1}\big)x\\
& = g_t(n_{\a_b})T^{-b} g_{t,b}(n_\b) x\\ & \in g_t(n_{\a_b})T ^{-b}V_t^{(N)}\subset  V_t.
\end{split}
\end{equation*}
This implies that $$z\in  U\cap {g_1(n_\a)^{-1}}V_1\cap \ldots \cap {g_k(n_\a)^{-1}}V_k$$ with $\a=\b \cup \a_b\in \F^{(1)}$ as $\b \cap \a_b=\emptyset$.
Since $\{n_\a\}_{\a\in \F^{(1)}}$ is an arbitrary IP-subset of $\{n_\a\}_{\a \in \F}$, it follows that $N_\SS (U,V_1,\ldots,V_k)$ is an $\G_A^*$-set.
Hence the proof of the theorem is complete.
\end{proof}

\section{Further discussions}

First recall that a subset $S$ of $\Z$ is {\em syndetic} if it has a bounded gap,
i.e. there is $N\in \N$ such that $\{i,i+1,\cdots,i+N\} \cap S \neq
\emptyset$ for every $i \in {\Z}$. $S$ is {\em thick} if it
contains arbitrarily long runs of integers.
A subset $S$ of $\Z$ is {\em piecewise syndetic} if it is an
intersection of a syndetic set with a thick set. A set $S$ is called {\em thickly syndetic} or {\em replete} if for every $N\in \N$ the positions where
length $N$ runs begin form a syndetic set. Note that the set of all thickly syndetic sets is a filter.  In \cite[Theorem 4.7.]{HY02} it is shown that
for a minimal and weakly mixing system
$(X,T)$, $$N(U,V)=\{n\in\Z: U\cap T^{-n}V\neq \emptyset \}$$ is thickly syndetic for all
nonempty open subsets $U,V$ of $X$.

Weak mixing of all orders was studied in \cite{G94, HSY-19-1}. One of main result of \cite{HSY-19-1} is as follows. Let $(X,\Gamma)$ be a topological system, where $\Gamma$ is a nilpotent group  such that for each $T\in \Gamma$,
$T\neq e_\Gamma$, is weakly mixing and minimal. For $d,k\in \N$ let $T_1,\ldots,T_d\in \Gamma$,
$\{p_{i,j}(n)\}_{1\le i\le k, 1\le j\le d}$ be integral polynomials
such that the expression
\begin{equation*}
  g_i(n)=T_1^{p_{i,1}(n)}\cdots T_d^{p_{i,d}(n)}
\end{equation*}
depends nontrivially on $n$ for $i=1,2,\ldots, k$, and for all $i\neq j\in \{1,2,\ldots,k\}$ the expressions $g_i(n)g_j(n)^{-1}$
depend nontrivially on $n$. Then for all non-empty open sets $U_1,\ldots, U_k$ and $V_1,\ldots,V_k$ of $X$
\begin{equation*}
  \{n\in \mathbb{Z}: U_1\times \ldots \times U_k \cap g_1(n)^{-1}\times \ldots\times g_k(n)^{-1}(V_1\times \ldots \times V_k)\neq \emptyset\}
\end{equation*}
is a thickly syndetic set, and
\begin{equation*}
\{n\in \mathbb{Z}:  U\cap (g_1(n)^{-1}V_1\cap \ldots \cap g_k(n)^{-1}V_k)\not=\emptyset\}
\end{equation*}
is a syndetic set.
We do not know that if the latter is thickly syndetic?

\medskip

Also a minimal system $(X,T)$ is weak mixing if and only if for all non-empty open sets $U,V$, $N(U,V)=\{n\in \Z: U\cap T^{-n}V\neq \emptyset\}$ is lower Banach density 1 \cite{HY04}.
Recall that for a subset $S$ of $\mathbb{Z}$, the {\em upper Banach
density} and {\em lower Banach density} of $S$ are
$$BD^*(S)=\limsup_{|I|\to \infty}\frac{|S\cap I|}{|I|},\ \text{and}\
BD_*=\liminf_{|I|\to \infty}\frac{|S\cap I|}{|I|},$$ where $I$
ranges over intervals of $\mathbb{Z}$.
We also have the following question: under conditions above, is
\begin{equation*}
\{n\in \mathbb{Z}:  U\cap (g_1(n)^{-1}V_1\cap \ldots \cap g_k(n)^{-1}V_k)\not=\emptyset\}
\end{equation*}
lower Banach density 1?


\end{document}